\documentclass{amsart}
\usepackage[utf8]{inputenc}

\usepackage{amssymb,comment}
\usepackage{mathrsfs}
\usepackage{stmaryrd}
\usepackage{mathabx}
\usepackage{amsthm}

\usepackage{tikz-cd}

\usepackage{lmodern}
\usepackage{csquotes}

\usepackage[T1]{fontenc}

\definecolor{darkgreen}{rgb}{0,0.5,0}
\definecolor{darkblue}{rgb}{0,0,0.8}
\definecolor{darkred}{rgb}{0.8,0,0}
\definecolor{lightblue}{rgb}{0,0.6,0.8}

\usepackage[pdfencoding=auto,colorlinks,citecolor=darkgreen,linkcolor=darkblue,urlcolor=darkred]{hyperref}

\usepackage[english]{babel}
\usepackage{url}
\usepackage[
backend=bibtex,
style=alphabetic,
sorting=nyt,
backref=false,
maxitems=10,
minitems=10,
maxbibnames=10,
minbibnames=10,
maxcitenames=10,
mincitenames=10,
giveninits=true,
maxalphanames=7,
minalphanames=7,
doi=false,
isbn=false,
url=false
]{biblatex}
\bibliography{references}

\usepackage{enumitem}
\usepackage{colonequals}

\DeclareFontFamily{U}{wncy}{}
\DeclareFontShape{U}{wncy}{m}{n}{<->wncyr10}{}
\DeclareSymbolFont{mcy}{U}{wncy}{m}{n}
\DeclareMathSymbol{\Sha}{\mathord}{mcy}{"58}

\usepackage{microtype}

\theoremstyle{plain}
\newtheorem{theorem}{Theorem}[subsection]
\newtheorem{lemma}[theorem]{Lemma}
\newtheorem{corollary}[theorem]{Corollary}
\newtheorem{proposition}[theorem]{Proposition}
\newtheorem{conjecture}[theorem]{Conjecture}

\newtheorem{remark}[theorem]{Remark}

\theoremstyle{definition}
\newtheorem{definition}[theorem]{Definition}

\theoremstyle{remark}

\usepackage{cleveref}
\crefname{theorem}{Theorem}{Theorems}
\crefname{lemma}{Lemma}{Lemmata}
\crefname{corollary}{Corollary}{Corollaries}
\crefname{proposition}{Proposition}{Propositions}
\crefname{definition}{Definition}{Definitions}
\crefname{conjecture}{Conjecture}{Conjectures}
\crefname{question}{Question}{Questions}
\crefname{example}{Example}{Examples}
\crefname{algorithm}{Algorithm}{Algorithms}
\crefname{remark}{Remark}{Remarks}
\crefname{assumption}{Assumption}{Assumptions}

\def\ol#1{\overline{#1}}% 		overline
\def\Alphabet{A,B,C,D,E,F,G,H,I,J,K,L,M,N,O,P,Q,R,S,T,U,V,W,X,Y,Z}%  Capitalized Alphabet
\def\alphabet{a,b,c,d,e,f,g,h,i,j,k,l,m,n,o,p,q,r,s,t,u,v,w,x,y,z}%	lowercase alphabet
\def\endpiece{xxx}%									marks end of list
\def\makeAlphabet[#1]{\expandafter\makeA#1,xxx,}%		Ex. \makeAlphabet[A,B]
\def\makealphabet[#1]{\expandafter\makea#1,xxx,}%		Ex. \makealphabet[c,d]
\def\makeA#1,{\def\temp{#1}\ifx\temp\endpiece\else%
	\mkbb{#1}\mkfrak{#1}\mkbf{#1}\mkcal{#1}\mkscr{#1}\mkbs{#1}\expandafter\makeA\fi}%
\def\makea#1,{\def\temp{#1}\ifx\temp\endpiece\else\mkfrak{#1}\mkbf{#1}\mkbs{#1}\expandafter\makea\fi}%
\def\mkbb#1{\expandafter\def\csname bb#1\endcsname{\mathbb{#1}}}%      Define bb
\def\mkfrak#1{\expandafter\def\csname fr#1\endcsname{\mathfrak{#1}}}%    Define frak
\def\mkbf#1{\expandafter\def\csname b#1\endcsname{\mathbf{#1}}}%           Define bold letters
\def\mkcal#1{\expandafter\def\csname c#1\endcsname{\mathcal{#1}}}%       Define calligraphy
\def\mkscr#1{\expandafter\def\csname s#1\endcsname{\mathscr{#1}}}%       Define script
\def\mkbs#1{\expandafter\def\csname bs#1\endcsname{{\boldsymbol{#1}}}}%       Define bold symbol
\def\makeop[#1]{\xmakeop#1,xxx,}%					Ex. \makeop[Hom,Spec]
\def\mkop#1{\expandafter\def\csname #1\endcsname{{\mathrm{#1}}}} % 
\def\xmakeop#1,{\def\temp{#1}\ifx\temp\endpiece\else\mkop{#1}\expandafter\xmakeop\fi}%
\def\makeup[#1]{\xmakeup#1,xxx,}%					Ex. \makeop[Hom,Spec]
\def\mkup#1{\expandafter\def\csname #1\endcsname{{\mathrm{#1}\,}}} % 
\def\xmakeup#1,{\def\temp{#1}\ifx\temp\endpiece\else\mkup{#1}\expandafter\xmakeup\fi}%
\makeAlphabet[\Alphabet]
\makealphabet[\alphabet]			
\makeop[AJ,CH,cores,cris,dR,Hom,Tor,Ext,Fil,holim,hocolim,dim,Sel,GL,SL,H,ord,Sym,Gal,Ann,ind,Aut,End,cd,Frob,Gr,sp,Tot,sm,an,Pic,NS,Br,tors,Reg,sgn,ss,nr,tr,new,cts,ur,alg,pd,disc,cyc,rk,cork,ab,nr,tors,Iw,res,inf,Cl,Quot,corank,Heeg,BK,AJ,div,MSD,Tam,loc,Nek,MSD,ac,BDP,PD,MTT,sgn,cond,prim,unr,rel,str,proj]

\makeup[Spec,Proj,Spwf,Sp,Sh,Spf,Sch,id,dR,rig,cris,im,coker]

\newcommand{\F}{\mathbf{F}}

\newcommand{\Q}{\mathbf{Q}}

\renewcommand{\epsilon}{\varepsilon}
\renewcommand{\theta}{\vartheta}
\renewcommand{\phi}{\varphi}

\newcommand{\mathup}[1]{\text{\textup{#1}}}
\renewcommand{\H} {\ensuremath{\mathup{H}}}

\newcommand{\Char}{\operatorname{Char}}

\newcommand{\defeq}{\colonequals}

\newcommand{\iso}{\simeq}

\numberwithin{equation}{section}

\begin{document}
\title[Iwasawa invariants of quadratic admissible twists]{On the Iwasawa invariants of admissible quadratic twists of elliptic curves at ordinary Eisenstein primes}
\author{Mulun Yin}
\address{(M. Yin) Morningside Center of Mathematics, No.55, Zhongguancun East Road, Beijing, 100190, China}
\email{mulunyin@amss.ac.cn}
\date{\today}

\begin{abstract}
	We study the Iwasawa $\mu$- and $\lambda$-invariants of quadratic twists of rational elliptic curves at Eisenstein primes $p\geq 3$. We verify Greenberg's $\mu=0$ conjecture for all such twists, which gives an Euler system free proof of Mazur's cotorsion conjecture, and study the $\lambda$-invariants when the twist is admissible in the sense of~\cite{GV00}. As a consequence, we prove the cyclotomic Iwasawa Main Conjecture and obtain the $p$-part BSD formulae in rank $0$ for admissible twists under mild conditions, covering many elliptic curves of potentially ordinary reduction.
\end{abstract}

\maketitle

\tableofcontents

\section*{Introduction}
\renewcommand{\thetheorem}{\Alph{theorem}}

\subsection{Background}In the ground-breaking work~\cite{GV00}, Greenberg and Vatsal studied the Iwasawa $\mu$- and $\lambda$-invariants of the Selmer group $\Sel(E/\bQ_\cyc)$ and the $p$-adic $L$-function $\cL_E$ of an elliptic curve, which combined with one divisibility proved by Kato~\cite{Kato}, yield the full Iwasawa Main Conjecture at Eisenstein primes of good ordinary reduction in many cases. Their arguments easily extend to the multiplicative reduction case, as long as their assumptions on the semisimplification of $E[p]$ are satisfied. More precisely, for an ordinary odd prime $p$ such that $E[p]$ is reducible as a $G_\bQ$-module (i.e., $p$ is an \emph{Eisenstein} prime for $E$), there is a short exact sequence of $G_\bQ$-modules\[
0\to \F(\phi) \to E[p]\to \F(\psi)\to 0,\]
where $\phi\psi=\omega$, the mod-$p$ cyclotomic character. One of $\phi$ and $\psi$ is odd (\emph{resp. ramified}), and the other is even (\emph{resp. unramified}). If the unramified character is odd, then Greenberg and Vatsal showed that \begin{itemize}
	\item $\mu(\Sel(E/\bQ_\cyc)^\vee)=\mu(\cL_E)=0$, and
	\item $\lambda(\Sel(E/\bQ_\cyc)^\vee)=\lambda(\cL_E)$.
\end{itemize}
These, combined with Kato's divisibility $\Char_\Lambda(\Sel(E/\bQ_\cyc)^\vee)\supset (\cL_E)$, yield the full Iwasawa Main Conjecture\[
\Char_\Lambda(\Sel(E/\bQ_\cyc)^\vee)=(\cL_E)\text{ in }\Lambda, 
\]
where $\Lambda=\bZ_p\llbracket \Gal(\bQ_\cyc/\bQ)\rrbracket\iso \bZ_p\llbracket T\rrbracket$.

More generally, Kato proved one divisibility over $\bQ(\mu_{p^\infty})$, which can be thought of as the cyclotomic $\bZ_p$-extension of $\bQ(\mu_p)$. There he simultaneously proved one divisilibity for each isotypic part according to the action of $\Delta=\Gal(\bQ(\mu_p)/\bQ)$:\begin{equation}\label{Katoomega^i}
\Char_\Lambda(\Sel(E\otimes \omega^i/\bQ_\cyc)^\vee)\supset (\cL(E,\omega^i)).
\end{equation}
In this paper, we extend the comparison of Iwasawa invariants in Greenberg--Vatsal to certain quadratic twists that are \emph{admissible} (see~\cref{defadm} for precise definitions). With the observations that\begin{itemize}
	\item Twisting by $\omega^i$ corresponds to taking the $\omega^i$-isotypic component~\eqref{Katoomega^i},
	\item Twisting by quadratic ramified characters turn ordinary reduction into potentially ordinary reduction, whose converse is true for potentially multiplicative reduction,
	\end{itemize}
we are able to get full Iwasawa Main Conjectures for certain quadratic twists of ordinary elliptic curves, and with an extended control theorem for the twists, to obtain rank $0$ BSD formula for some elliptic curves of additive reduction. In the process, we also discuss Greenberg's $\mu=0$ conjecture for all potentially ordinary elliptic curves coming from quadratic twists of ordinary ones, including all potentially multiplicative cases, and discuss an application in Mazur's cotorsion conjecture.

\subsection{The main results}\label{results}
The main results of this paper are the following.

\begin{theorem}(Greenberg's $\mu=0$ conjecture)
	Let $E$ be an elliptic curve of ordinary reduction at $p\geq 3$ be a prime, such that $E[p]$ is reducible as a $G_\bQ$-module. Let $\chi$ be a quadratic character corresponding to a quadratic field $K_\chi$. Then there exists an elliptic curve $E'$ in the $\bQ$-isogeny class of $E^{K_\chi}$ such that $\mu(\Sel_{\Gr_0}(E'/\bQ_\infty)^\vee)=0$.
\end{theorem}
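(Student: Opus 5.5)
The plan is to follow the Greenberg--Vatsal strategy: within the isogeny class of the twist, pick the curve whose residual representation has the "good" shape, then compare its Greenberg Selmer group with the Selmer group attached to the residual characters, where $\mu$ is controlled by Ferrero--Washington.

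\textbf{Step 1: residual representation of the twist.} We have $0\to\F(\phi)\to E[p]\to\F(\psi)\to0$ with $\phi\psi=\omega$. Hence $E^{K_\chi}[p]\iso E[p]\otimes\chi$ sits in
\[
0\to \F(\phi\chi)\to E^{K_\chi}[p]\to \F(\psi\chi)\to 0 .
\]
Since $\chi$ is quadratic and $p\ge3$, $\bar\chi$ is a nontrivial (possibly ramified at $p$) character of order $2$. Moving along a chain of $p$-isogenies in the $\bQ$-isogeny class of $E^{K_\chi}$, we may swap sub and quotient. So we choose $E'$ whose sub character $\alpha\in\{\phi\chi,\psi\chi\}$ is the one making the dual Selmer group have $\mu=0$ (for an even quadratic twist, the one with $\alpha$ odd, as in Greenberg--Vatsal). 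The needed isogeny exists by the standard argument: if $E[p]$ is nonsplit with the wrong sub, the quotient by $\F(\alpha)$ realizes the other extension. If it is split, either choice works.

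\textbf{Step 2: reduce to a residual Selmer group.} For $E'$ we use $\Sel_{\Gr_0}$, the Greenberg Selmer group with strict local conditions at the bad places; these are exactly the $\Sigma_0$-imprimitive conditions where the twist ramifies. Because strict conditions at $\ell\ne p$ make the Selmer group compatible with mod-$p$ reduction, the usual argument gives
\[
\Sel_{\Gr_0}(E'/\bQ_\infty)[p]\ \text{finite-index-related to}\ \Sel(E'[p]/\bQ_\infty),
\]
with finite kernel and cokernel. So $\mu=0$ is equivalent to $\Sel(E'[p]/\bQ_\infty)$ being finite, or more precisely to its finiteness being governed by $\lambda$. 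At $p$, potential ordinarity gives an ordinary filtration $\mathrm{Fil}^+$ over $\bQ_p(\chi)$ or over $\bQ_p$ after twisting. We must check that the reduction of $\mathrm{Fil}^+$ coincides with $\F(\alpha)$ restricted to $G_{\bQ_p}$, or at least that the local condition at $p$ is the one coming from the filtration induced by $\alpha$. I expect this to be the main obstacle. When $\chi$ is ramified at $p$, the twist is additive at $p$. In that case one must argue with $\chi|_{I_p}=\omega^{(p-1)/2}$ and compare $\alpha|_{I_p}$ against the inertial character on $\mathrm{Fil}^+\otimes\chi$. One must also handle the case where both $\phi\chi$ and $\psi\chi$ become ramified at $p$. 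In that case we would try choosing $E'$ so that the sub is the character whose inertial restriction matches the unit-root line.

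\textbf{Step 3: exact sequence and Ferrero--Washington.} From $0\to\F(\alpha)\to E'[p]\to\F(\beta)\to0$ we get
\[
H^1_{\mathcal L}(\bQ_\infty,\F(\alpha))\to \Sel(E'[p]/\bQ_\infty)\to H^1_{\mathcal L}(\bQ_\infty,\F(\beta)),
\]
with local conditions "unramified/strict" inherited at $p$ and strict at $\Sigma_0$. The outer groups are $\Lambda$-cotorsion modules built from $\omega^j\chi$-isotypic parts of the unramified Iwasawa module $X_\infty$ of $\bQ(\mu_p,\chi)_\infty$ (abelian over $\bQ$), or from the corresponding $\bar{}$-parts of Iwasawa cohomology with finitely many local Euler factors. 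Their $\mu$-invariants vanish by Ferrero--Washington, and the local terms at $\Sigma_0$ contribute only $\mu=0$ modules. Here we need that the relevant Selmer group for $\beta$ is of the finite type rather than containing a copy of $\Lambda/p$. This is exactly why the parity choice of $\alpha$ in Step 1 matters: the even character must carry the Selmer-type condition, corresponding to a plus-part with $\mu=0$ finite $[p]$-torsion, and the odd one the relaxed part. Combining these, $\Sel(E'[p]/\bQ_\infty)$ is finite, hence $\mu(\Sel_{\Gr_0}(E'/\bQ_\infty)^\vee)=0$.
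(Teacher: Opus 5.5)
There is a genuine gap. It sits in Steps 1 and 3, not in the local-filtration question you flag in Step 2.

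\textbf{The local conditions are fixed by the filtration at $p$, not by the isogeny choice.} The Selmer condition at $p$ is $\ker\bigl(\H^1(\bQ_{\cyc,p},E'[p]\otimes\chi)\to\H^1(\bQ_{\cyc,p},\Fil^-\otimes\chi)\bigr)$. Write $\psi$ for whichever of $\phi,\psi$ is ramified. Then the character that is locally $\Fil^+$, namely $\psi\chi$, always gets the relaxed condition, and $\phi\chi$ always gets the strict condition. This holds whether $\F(\psi\chi)$ is a sub or a quotient of $E'[p]\otimes\chi$; when the unramified character is the global sub, the sequence is locally split, so this is immediate. Moving along $p$-isogenies therefore cannot change which character is relaxed.

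\textbf{Your parity rule is reversed.} For odd $\theta$, the Euler characteristic formula gives $\H^1(\bQ^\Sigma/\bQ_\cyc,\bQ_p/\bZ_p(\theta))$ positive $\Lambda$-corank. Hence $\Sel^\Sigma_\rel(\F(\theta))=\H^1(\bQ^\Sigma/\bQ_\cyc,\F(\theta))$ is infinite. By contrast, strict Selmer groups of any character, and relaxed ones of even characters, have $\mu=0$ (Mazur--Wiles/Ferrero--Washington). So your exact-sequence argument works exactly when $\psi\chi$ is even. That is the admissible case, and there it works for $E^{K_\chi}$ itself with no isogeny needed.

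\textbf{The non-admissible case is not reached.} There $\psi\chi$ is odd, so in every curve of the isogeny class one outer term is infinite, and bounding outer terms can never give finiteness. Your proposed fix in Step 2, taking as sub the character matching the unit-root line, selects exactly the bad curve. There $\Sel^\Sigma_\rel(\F(\psi\chi))$ maps into the Selmer group with finite kernel, so $\mu>0$. The same happens in the split case, contrary to ``either choice works''.

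\textbf{The missing idea.} Use Ribet's lemma to choose $E'$ so that the odd relaxed character $\psi\chi$ is a \emph{non-split quotient}. Then show directly that the residual Selmer group is finite, even though the quotient term is infinite. The paper does this in two steps.
\begin{enumerate}
\item Restrict to $L=\bQ(E[p])K_\chi$, where $E'[p]\otimes\chi$ becomes a trivial module, and project along the local line $\Fil^+$ onto $\F(\phi\chi)$. Classes coming from $\bQ_\cyc$ are $\Gal(L_\cyc/\bQ_\cyc)$-equivariant homomorphisms. By non-splitness, the only $G_\bQ$-stable line is $\F(\phi\chi)$, which is not $\Fil^+$. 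So an equivariant homomorphism with values in $\Fil^+$ vanishes, and restriction followed by projection has finite kernel.
\item The kernel of localization at the primes above $p$ on $\H^1(L^\Sigma/L_\cyc,\F(\phi\chi))$ is finite. This follows from the $p$-extension principle for $L/\bQ(\phi,\psi,\chi)$ together with Ferrero--Washington.
\end{enumerate}
Since the Selmer condition at $p$ factors through this composite, $\Sel^\Sigma(E'[p]\otimes\chi/\bQ_\cyc)$ is finite.

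\textbf{A smaller point in Step 2.} The reduction to the residual group should go through the $\Sigma$-imprimitive Selmer group. That means dropping the conditions at $\ell\ne p$, using surjectivity of the global-to-local map and $\mu=0$ for the local terms. Imposing strict conditions at the bad places is not the right route.
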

This is the main result of~\cref{nonadGr}. In particular, when $\chi$ is ramified, $E^{K_\chi}$ has additive potentially ordinary reduction, and all curves with potentially multiplicative reduction arise in this way. The proof is analogous to that in~\cite{LY26}, which essentially covers all unramified quadratic twists.

An interesting consequence of this $\mu=0$ conjecture in the Eisenstein case is the following form of Mazur's cotorsion conjecture.

\begin{theorem}
		Assume $E/\bQ$ has ordinary reduction at an Eisenstein prime $p$ and $\chi$ is a quadratic ramified character. Then the Selmer group for the quadratic twist $E^\chi$ is $\Lambda$-cotorsion over any abelian number field $F$, i.e., Mazur's cotorsion conjecture holds in these cases.
\end{theorem}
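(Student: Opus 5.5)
The plan is to deduce cotorsion from the vanishing of the $\mu$-invariant in the previous theorem, applied not only to $E^\chi$ but to its twists by the characters of $\Gal(F/\bQ)$. The key point is that a $\Lambda$-module whose Pontryagin dual has finite $\mu$-invariant and finite $\lambda$-invariant (equivalently, is finitely generated over $\bZ_p$ after quotienting out the $p$-power torsion) is automatically torsion. So showing $\mu=0$ gives cotorsion directly, provided we control the $\lambda$-part, which is automatic once the dual is finitely generated over $\Lambda$ with $\mu=0$.

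First, reduce to $F$ contained in a cyclotomic field. By Kronecker--Weber, $F\subset \bQ(\mu_m)$. Cotorsion over the larger field $\bQ(\mu_m)_\infty$ implies cotorsion over $F_\infty$, via restriction with finite kernel (Shapiro/inflation–restriction, the kernel being controlled by $H^1$ of a finite group with coefficients in $E^\chi(\bQ(\mu_m)_\infty)[p^\infty]$, which is cofinitely generated over $\bZ_p$).

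Second, decompose along $G=\Gal(F_\infty/\bQ_\infty)$, with $F$ chosen so this is finite abelian. Up to the prime-to-$p$ part and a finite error (handled by working with $\bZ_p[\mathrm{im}\,\theta]$-coefficients), $\Sel(E^\chi/F_\infty)$ decomposes into the $\theta$-isotypic pieces $\Sel(E^\chi\otimes\theta/\bQ_\infty)$ for characters $\theta$ of $G$. The $p$-power-order part is handled by Nakayama: cotorsion is detected modulo the augmentation ideal of $\bZ_p[G_p]$, where $G_p$ is the Sylow $p$-subgroup of $G$. We therefore need $\Lambda$-cotorsion of $\Sel(E^\chi\otimes\theta/\bQ_\infty)$ for prime-to-$p$ characters $\theta$.

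Third, apply the $\mu=0$ machinery to each twist. Here $E^\chi\otimes\theta$ is no longer a quadratic twist when $\theta$ has order greater than $2$, so the previous theorem does not apply verbatim. Its proof, however, goes through the residual exact sequence $0\to\F(\phi\chi)\to E^\chi[p]\to\F(\psi\chi)\to0$ and reduces $\mu=0$ to the vanishing of $\mu$ for Selmer groups of characters, namely Ferrero--Washington. Tensoring with $\theta$ gives the sequence $0\to\F(\phi\chi\theta)\to(E^\chi\otimes\theta)[p]\to\F(\psi\chi\theta)\to0$, and Ferrero--Washington still applies over abelian fields. Isogeny invariance needs attention: $\mu$ may change under isogeny, but cotorsion does not, so it suffices to find some isogenous model with $\mu=0$. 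Then $\mu=0$ together with finite generation gives that the dual is finitely generated over $\bZ_p$, hence torsion.

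The main obstacle is the third step: extending the residual argument to nonquadratic $\theta$, and especially the local conditions at $p$ in the potentially ordinary setting when $\chi\theta$ is ramified at $p$. There I would use the Greenberg $\Gr_0$ local condition, the one for which the previous theorem was proved, and argue that it agrees with the Bloch--Kato condition up to finite error. For this I would try to check that the relevant local $H^0$ terms are finite using the ramification of $\chi$.
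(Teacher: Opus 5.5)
\textbf{Gap in the third step.} This step rests on a mischaracterization of how $\mu=0$ is proved, and it fails exactly where it is needed. The sequence $\Sel^\Sigma_?(\F(\phi\chi\theta))\to\Sel^\Sigma(E[p]\otimes\chi\theta/\bQ_\cyc)\to\Sel^\Sigma_?(\F(\psi\chi\theta))$ bounds the middle term via the $\mu=0$ results for characters only when both outer terms are finite, i.e.\ in the admissible configuration. In the non-admissible configuration, the twisted character on the $\Fil^+$-line is odd and carries the relaxed condition. Since $\H^1(\bQ^\Sigma/\bQ_\cyc,\F(\eta))$ is infinite for every odd $\eta$ (Euler characteristic), the sequence then gives no bound. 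This configuration occurs among your twists as soon as $\theta(-1)$ varies, and already for $\theta=\mathbf{1}$, since $\chi$ is not assumed admissible.

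Passing to an isogenous curve does not help by itself: it only moves this infinite term from the left of the sequence to the right. What actually kills it is the argument of \cref{nonadGr}:
\begin{enumerate}
\item put the odd ramified character as a non-split quotient;
\item restrict to $L=\bQ(E[p])K_\chi$ (for you, $L=\bQ(E[p])K_\chi\bQ(\theta)$);
\item use non-splitness to show $\ker(\proj_L\circ\res_{L/\bQ})$ is finite;
\item apply the $p$-extension principle for $L$ over the abelian field $\bQ(\phi,\psi,\chi,\theta)$ together with Ferrero--Washington.
\end{enumerate}
Your proposal never supplies this. The obstacle you single out instead is not a real one: the local condition at $p$ on the $\theta$-part is simply $\H^1(\bQ_{\cyc,p},\Fil^-(E[p^\infty])\otimes\chi\theta)$, inherited from the ordinary filtration over $FK_\chi$ as in \cref{2.5.3}. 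No Bloch--Kato comparison enters.

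\textbf{Error in the second step.} As stated, it is false: $\Lambda$-cotorsion is not detected modulo the augmentation ideal of $\bZ_p[G_p]$. Take $G_p=\bZ/p$ acting on $X=\Lambda\otimes_{\bZ_p}\bZ_p[\zeta_p]$ through $\zeta_p$. Then $X/I_{G_p}X\iso\Lambda/p$ is torsion, although $X$ has $\Lambda$-rank $p-1$. What does descend is finiteness of $X/pX$, because the augmentation ideal of $\F_p[G_p]$ is nilpotent. So you must reduce to \emph{finiteness of the residual} Selmer groups of the prime-to-$p$ twists, not merely their cotorsionness. The isogenous model may be chosen $\theta$ by $\theta$, since isogenies respect the isotypic decomposition. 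Likewise, your opening criterion should read: ``$X/pX$ finite implies $X$ finitely generated over $\bZ_p$, hence torsion''. Speaking of finite $\mu$ and $\lambda$ is not meaningful for a module not yet known to be torsion.

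\textbf{Comparison with the paper.} Once repaired, your route is the paper's argument run twist by twist, and the decomposition buys nothing. The paper reruns the proof of \cref{nonadGr} directly over $F_\cyc$ with $L=F(E[p])K_\chi$.
\begin{itemize}
\item $E[p]$ stays non-split over the abelian field $F$, because $F\cap\bQ(E[p])\subseteq\bQ(\phi,\psi)$, the maximal abelian subextension.
\item Hence $F(E[p])K_\chi/F(\phi,\psi,\chi)$ has degree $1$ or $p$, and Ferrero--Washington applies to $F(\phi,\psi,\chi)$.
\end{itemize}
This gives finiteness of the residual Selmer group over $F_\cyc$ for one curve $E'$ in the isogeny class. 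That is incompatible with positive $\Lambda$-rank of $\Sel(E'\otimes\chi/F_\cyc)^\vee$, and cotorsion is isogeny invariant. No Kronecker--Weber reduction or wild Nakayama step is needed.
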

This is~\cref{Mazur}. Similar results have been obtained in the literature using Euler systems, but our $\mu=0$ result provides an Euler system free method.

When the twist is admissible, we are also able to study the $\lambda$-invariants, in the style of~\cite{GV00}. A main consequence is the following $p$-part BSD formula in rank $0$.
\begin{theorem}($p$-part BSD in rank $0$)
Let $E$ be an elliptic curve of ordinary reduction at $p\geq 3$ such that $E[p]$ is reducible, and let $\chi$ be an admissible quadratic ramified character corresponding to a quadratic field $K_\chi$. Assume that $L(E^{K_\chi},1)\ne 0$ for the quadratic twist $E^{K_\chi}$, and that $E'(\bQ)[p]=0$ for all curves $E'$ isogenous to $E^{K_\chi}$. Then\[
\ord_p\bigl(\frac{L(E^{K_\chi},1)}{\Omega_{E^{K_\chi}}}\bigr)=\ord_p\bigl(\#\Sha(E^{K_\chi}/\bQ)\cdot\Tam(E^{K_\chi}/\bQ)\bigr),
\]
i.e., the rank $0$ $p$-part BSD formula for $E^{K_\chi}$ holds.

\end{theorem}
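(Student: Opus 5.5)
The plan is the standard route: full Iwasawa Main Conjecture, then a control theorem at $T=0$ and an interpolation formula for the $p$-adic $L$-function. Write $A=E^{K_\chi}$. First I fix a good model in the isogeny class. By the $\mu=0$ theorem above (\cref{nonadGr}) there is an $E'$ isogenous to $A$ with $\mu(\Sel_{\Gr_0}(E'/\bQ_\infty)^\vee)=0$. Since every isogenous curve has $E'(\bQ)[p]=0$ by hypothesis, and both sides of the BSD formula are isogeny invariant (Cassels), it suffices to prove the formula for this $E'$. Note that $A$ has additive, potentially ordinary reduction at $p$ because $\chi$ is ramified at $p$.

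Next I prove the Main Conjecture for $E'$. Kato's divisibility \eqref{Katoomega^i} applies to $E\otimes\chi$ viewed as the $\chi$-isotypic part over the field cut out by $\chi$. It gives $\Char_\Lambda(\Sel_{\Gr_0}(E'/\bQ_\cyc)^\vee)\supset(\cL_{E'})$ after inverting $p$, and I would have to check it integrally using $E'[p]$ reducible with $\mu=0$. Admissibility in the sense of \cref{defadm} is what lets me run the Greenberg--Vatsal comparison. Both the Selmer group and $\cL_{E'}$ have $\mu=0$. Their $\lambda$-invariants agree, each equal to the $\lambda$-invariant of the corresponding objects for the characters $\phi\chi$ and $\psi\chi$: on the algebraic side via Iwasawa's theory of Dirichlet characters, on the analytic side via the mod $p$ congruence with Eisenstein series and Kubota--Leopoldt $p$-adic $L$-functions. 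Together with one divisibility this gives equality of characteristic ideals in $\Lambda$.

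Then I specialize at $T=0$. I need a control theorem for the Greenberg Selmer group of a potentially ordinary twist. It should compare $\Sel_{\Gr_0}(E'/\bQ_\cyc)^{\Gamma}$ with $\Sel_{p^\infty}(E'/\bQ)$. With $L(A,1)\ne0$, Kato gives $\Sel_{p^\infty}(E'/\bQ)$ finite. The characteristic power series then satisfies $f(0)\sim \#\Sha(E'/\bQ)[p^\infty]\cdot\Tam\cdot\prod(\text{local corrections at }p)/\#E'(\bQ)[p^\infty]^2$, and the torsion term is trivial by hypothesis. On the analytic side, the interpolation formula gives $\cL_{E'}(0)=(\text{Euler factor at }p)\cdot L(E',1)/\Omega_{E'}$. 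For additive potentially ordinary reduction the Euler factor at $p$ is $1$, so the unit-root correction should be trivial or a unit.

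The main obstacle is the extended control theorem at $p$. For additive reduction the local condition at $p$ comes from the $\chi$-twisted ordinary filtration, and one must show that the local cokernel term at $p$ exactly matches the Tamagawa number $c_p(A)$ and the (trivial) Euler factor. I would try to compute $H^1(\bQ_p,\cdot)$ of the twisted unramified quotient directly, using that $\chi|_{G_{\bQ_p}}$ is ramified quadratic and $p\ge3$, so invariants of $\F(\phi\chi)$ and $\F(\psi\chi)$ locally vanish. I expect this to make the local error terms units and $c_p(A)$ prime to $p$. The remaining local terms at $\ell\ne p$ give the Tamagawa factors, as in Greenberg's computation.
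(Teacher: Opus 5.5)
\textbf{Gap: the analytic side does not close.} Your architecture matches the paper's: Kato's divisibility plus the Greenberg--Vatsal comparison give the Main Conjecture, and an Euler-characteristic formula at $T=0$ is then compared with an interpolation formula. The trouble is that no $p$-adic $L$-function $\cL_{E'}$ of the additive curve $E'$ satisfies $\cL_{E'}(0)=(\text{Euler factor})\cdot L(E',1)/\Omega_{E'}$. The object in the Main Conjecture is the $\chi$-branch $\cL(E,\chi)$ of Kato's $p$-adic $L$-function of the \emph{ordinary} curve $E$. One first writes $\chi=\chi_\unr\omega^{\frac{p-1}{2}}$ and replaces $E$ by $E\otimes\chi_\unr$. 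Kato's Theorem 16.2 then gives
\[
\cL(E,\omega^{\frac{p-1}{2}})(0)\sim\frac{p}{\alpha\, G(\omega^{\frac{p-1}{2}},\zeta_p)}\cdot\frac{L(E\otimes\omega^{\frac{p-1}{2}},1)}{\Omega_E^{\pm}}.
\]
The multiplier $p/G$ has $p$-adic valuation $1/2$, so it is not a unit. The period is $\Omega_E^{\pm}$ of the untwisted curve, not $\Omega_{E^{K_\chi}}$. You have conflated the trivial Euler factor of $L(E^{K_\chi},s)$ at $p$ with this interpolation multiplier.

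The missing step is the period comparison $\Omega_{E^{K_\chi}}\sim\Omega_E^{\pm}/\sqrt{\pm p}$ up to $p$-adic units, with $\pm=(-1)^{\frac{p-1}{2}}$. The paper gets this from Pal's formula: the correction $\tilde u$ there is a unit because $p\nmid\Delta_E$ in the good case and $p\nmid c_4$ in the multiplicative case. One also needs the Manin constant of $E$ to be prime to $p$. Only then does $\sqrt{\pm p}$ cancel the Gauss sum, giving $\cL(E,\chi)(0)\sim L(E^{K_\chi},1)/\Omega_{E^{K_\chi}}$. Without it, the two valuations you compare at the end are not shown to agree.

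\textbf{Smaller points.}
\begin{itemize}
\item The hypothesis that no curve isogenous to $E^{K_\chi}$ has a rational $p$-torsion point is already needed for the Main Conjecture. It is equivalent to $\phi\chi,\psi\chi\neq\mathbf 1$. That is what makes the imprimitive residual Selmer sequence short exact and lets Mazur--Wiles and the Kubota--Leopoldt congruence apply. You only used it to remove $\#E'(\bQ)[p^\infty]^2$.
\item Moving to a $\mu=0$ member of the isogeny class is unnecessary: for admissible twists every curve in the class has $\mu=0$.
\item At $p$, your claim that both $\F(\phi\chi)$ and $\F(\psi\chi)$ have trivial local invariants fails for $p=3$, where $\Fil^+(E[p])\otimes\chi$ is unramified. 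What the computation needs is $\H^0(\bQ_{\cyc,p},\Fil^-(E[p^\infty])\otimes\chi)=0$, and that does hold.
\item The paper avoids a direct local computation at $p$. It passes to $K_\chi$ and splits $\ker(r_{p,E/K_\chi})=\ker(r_{p,E})\oplus\ker(r_{p,E\otimes\chi})$. Greenberg's Lemma 3.4 over $\bQ_p$ and $(K_\chi)_p$, which share a residue field, shows the twisted term is trivial; the split multiplicative case needs a separate check.
\item Your expectation that $p\nmid c_p(E^{K_\chi})$ is correct, since the Kodaira type is $\mathrm{I}_0^*$ or $\mathrm{I}_n^*$.
\end{itemize}
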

This is~\cref{rank0BSD}. In particular, this covers many elliptic curves of additive potentially ordinary reduction, including all curves with potentially multiplicative reduction whose isogeny class see no global torsion (in other words, $\F(\mathbf{1})$ is neither a $G_\bQ$-submodule nor a quotient of $E^{K_\chi}[3]$) at $p=3$.

The proof of the above theorem is a consequence of the cyclotomic Iwasawa Main Conjecture for the twists, as well as an explicit computation based on a control theorem. More precisely, we have the following.

\begin{theorem}(Cyclotomic Iwasawa Main Conjecture)
	Let $E$ be an elliptic curve with ordinary reduction at $p$ such that $E[p]$ is reducible over $G_\bQ$. Let $\chi$ be a ramified quadratic admissible twist of $E$. Assume $\H^0(\bQ,E'[p]\otimes\chi)=0$ for all $E'$ isogenous to $E$. Then the Iwasawa Main Conjecture for the twist $E^{K_\chi}$ is true, i.e.,\[
	\Char_{\Lambda}(\Sel(E^{K_\chi}/\bQ_\cyc)^\vee)=(\cL(E,\chi)).\]
	
\end{theorem}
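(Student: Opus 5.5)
The plan follows the Greenberg--Vatsal strategy. We show that the $\mu$-invariants on both sides vanish and that the $\lambda$-invariants agree; then Kato's divisibility, specialised to the twist, upgrades to an equality. First we realise Kato's divisibility for $E^{K_\chi}$. Since $\chi$ is quadratic and ramified, we write $\chi=\omega^{(p-1)/2}\chi'$ when $p$ ramifies in $K_\chi$, with $\chi'$ unramified at $p$, or simply use $\chi$ with conductor prime to $p$. Kato's theorem over abelian extensions, in the form \eqref{Katoomega^i} extended to the character $\chi$ (Kato works over $\bQ(\mu_{Np^\infty})$, so the $\chi$-isotypic part is allowed), then gives
\[
\Char_\Lambda(\Sel(E^{K_\chi}/\bQ_\cyc)^\vee)\supset(\cL(E,\chi)).
\]
The hypothesis $\H^0(\bQ,E'[p]\otimes\chi)=0$ for all $E'$ isogenous to $E$ ensures that Kato's integrality holds for the correctly normalised period, and that we may replace $E$ by a suitable isogenous curve without changing either side up to units. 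It also guarantees that $\Sel(E^{K_\chi}/\bQ_\cyc)$ has no nonzero finite $\Lambda$-submodules, and that the local terms behave well at $p$ (potentially ordinary reduction, Greenberg's $\Sel_{\Gr_0}$ agreeing with the usual Selmer group).

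Next we compare the algebraic and analytic invariants using the reducible residual representation $0\to\F(\phi\chi)\to E[p]\otimes\chi\to\F(\psi\chi)\to0$. On the algebraic side, the $\mu=0$ theorem of \cref{nonadGr} gives $\mu=0$ for some curve in the isogeny class. Under our $\H^0$ hypothesis the $\mu$-invariants are isogeny invariant up to the standard correction, which vanishes here, so $\mu(\Sel(E^{K_\chi}/\bQ_\cyc)^\vee)=0$. For $\lambda$, we adapt the Greenberg--Vatsal argument: enlarge the Selmer group to a $\Sigma_0$-imprimitive one, where $\Sigma_0$ is the set of bad primes away from $p$. Its $p$-torsion is then governed by the two characters $\phi\chi,\psi\chi$, so $\lambda^{\Sigma_0}$ equals $\lambda(L_p(\phi\chi))+\lambda(L_p(\psi^{-1}\chi\omega))$ plus the local Euler factor contributions. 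Here $L_p$ denotes the Kubota--Leopoldt $p$-adic $L$-functions, via the Iwasawa main conjecture of Mazur--Wiles. Admissibility in the sense of \cref{defadm} is exactly what is needed at $p$: the twisted Dirichlet character attached to the local quotient stays unramified or odd in the right way, so the Greenberg--Vatsal computation of local conditions goes through.

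On the analytic side, we express $\cL(E,\chi)$ modulo $p$ via Eisenstein congruences. The twisted modular symbol of $f\otimes\chi$ is congruent to that of an Eisenstein series $E_2(\phi\chi,\psi\chi)$, suitably stabilised. Hence the imprimitive $\cL^{\Sigma_0}(E,\chi)$ is congruent modulo $p$ to a product of two Kubota--Leopoldt functions times the same Euler factors. This gives $\mu(\cL)=0$ and $\lambda(\cL^{\Sigma_0})=\lambda^{\Sigma_0}_{\mathrm{alg}}$. Removing the Euler factors, which contribute identical $\lambda$ on both sides, yields equality of primitive $\lambda$-invariants. Together with $\mu=0$ on both sides and the divisibility, the quotient of characteristic ideals is a unit, giving the claimed equality.

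The main obstacle I expect is the analytic congruence. One must check that the period $\Omega_{E^{K_\chi}}$ normalising $\cL(E,\chi)$ is the one for which the modular symbol is congruent to the Eisenstein boundary symbol. This is subtle because the twist by a ramified $\chi$ changes the level and the local behaviour at primes dividing the conductor of $\chi$, and the Greenberg--Vatsal argument requires the Eisenstein quotient to be the odd, unramified character. I would first use the admissibility hypothesis to reduce to the case where, after possibly passing to an isogenous curve, the sub-character $\phi\chi$ is the one that is unramified at $p$ and odd. I would then invoke Vatsal's canonical period comparison in that normalisation, checking that the $\H^0$ hypothesis removes the ambiguity of a factor of $p$.
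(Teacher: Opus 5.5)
Your overall architecture (Kato's divisibility, $\mu=0$ on both sides, an imprimitive $\lambda$-comparison through $\phi\chi,\psi\chi$ and Mazur--Wiles, then removal of Euler factors) is the paper's. The gap is in the analytic step you single out: it is set up on the wrong object, and the reduction you propose for it cannot be carried out.

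Let $\theta_{\mathrm u}$ be the character among $\phi,\psi$ that is unramified at $p$, and $\theta_{\mathrm r}=\omega\theta_{\mathrm u}^{-1}$ the other. Since $\chi|_{I_p}=\omega^{(p-1)/2}|_{I_p}$, the twist $\theta_{\mathrm u}\chi$ is always ramified at $p$. The twist $\theta_{\mathrm r}\chi|_{I_p}=\omega^{(p+1)/2}|_{I_p}$ is trivial only for $p=3$, and then admissibility gives $\theta_{\mathrm r}\chi(-1)=-\theta_{\mathrm u}(-1)\chi(-1)=1$. So no curve in the isogeny class has a twisted sub-character that is unramified at $p$ and odd.

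The ramification of the twisted characters is beside the point anyway. $\cL(E,\chi)$ is Kato's $\chi$-branch for the \emph{ordinary} form $f$: it is built from the $\chi$-twisted modular symbols of $f$ of sign $\chi(-1)$ and normalised by $\Omega_E^{\chi(-1)}$. It is not a $p$-adic $L$-function of the newform $f\otimes\chi$ (additive at $p$) normalised by $\Omega_{E^{K_\chi}}$. That period only enters later, in the rank-$0$ BSD application, through Pal's comparison. The congruence you need is the one for $f$ itself. Admissibility, $\chi(-1)=-\theta_{\mathrm u}(-1)$, says exactly that $\chi(-1)$ is the sign for which Vatsal's canonical-period comparison \cite[Theorem (2.10)]{Vat99} and \cite[Theorem (3.11)]{GV00} give the Eisenstein congruence for the modular symbols of $f$. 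Twisting that congruence by $\chi$ yields $\cL^\Sigma(E,\chi)\equiv\cL^\Sigma_{\phi\chi}\cL^\Sigma_{\psi\chi}\pmod{p\Lambda}$. The $\H^0$ hypothesis plays no role in fixing the period.

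Your algebraic $\mu$-step also rests on an unjustified principle. The $\H^0$ hypothesis does not control how $\mu$ changes under isogeny; \cref{nonadGr} is precisely about $\mu$ changing inside an isogeny class. So transferring $\mu=0$ from some isogenous curve is not justified as written. What gives $\mu=0$ here, for every curve in the isogeny class, is admissibility. By \cref{charforE^K}, $\Sel^\Sigma(E[p]\otimes\chi/\bQ_\cyc)$ sits between the relaxed Selmer group of $\theta_{\mathrm r}\chi$ and the strict Selmer group of $\theta_{\mathrm u}\chi$. These local conditions are dictated by the ordinary filtration of $E$, not by the ramification of the twisted characters. Admissibility makes $\theta_{\mathrm r}\chi$ even and $\theta_{\mathrm u}\chi$ odd, so both outer terms have $\mu=0$, and \cref{GV2.4}(iii) concludes.

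This alignment of ramification-dictated local conditions with the parity-dictated ones in \cref{IMCoddeven} is also the concrete content behind your phrase ``admissibility is exactly what is needed at $p$'' in the $\lambda$-step. Concretely, use isogeny invariance of $\lambda$ to make $\theta_{\mathrm r}=\phi$ the submodule. Then $\Fil^-(E[p])\otimes\chi=\F(\psi\chi)$, and the snake lemma gives $0\to\Sel^\Sigma_\rel(\F(\phi\chi))\to\Sel^\Sigma(E[p]\otimes\chi/\bQ_\cyc)\to\Sel^\Sigma_\str(\F(\psi\chi))\to0$.

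The hypothesis $\H^0(\bQ,E'[p]\otimes\chi)=0$ is used exactly here, in two ways:
\begin{itemize}
\item it turns this sequence into additivity of $\lambda$ via \cite[Proposition 2.8]{GV00};
\item it ensures the even character $\phi\chi$ is nontrivial, so that \cref{IMCoddeven} applies and the Kubota--Leopoldt function is holomorphic.
\end{itemize}
It is not what gives Kato's integrality or the absence of finite submodules.
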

This is~\cref{IMC}. The assumption that no curve in the isogeny class should have global torsion comes from the lack of knowledge for classical Iwasawa theory of the trivial representation $\F(\mathbf{1})$. It's connected to the BSD formula via the branched $p$-adic $L$-function developed in the work of Kato~\cite[section 16]{Kato} as well as the following calculation of Euler characteristic.

\begin{theorem}
	Assume $E/\bQ$ has ordinary reduction at $p\geq 3$ and that $\chi$ is a quadratic ramified character. If $\Sel_{\Gr_0}(E\otimes\chi/\bQ)$ is finite, then $\Sel_{\Gr_0}(E\otimes\chi/\bQ_\cyc)^\vee$ is generated by an $f_{E\otimes\chi}(T)\in\Lambda=\bZ\llbracket T\rrbracket$ not divisible by $T$ with\[
	f_{E\otimes\chi}(0)\sim\bigl(\prod_{p\ne v\mid N(E^{K_\chi})}\ord_p(c_{v,E^{K_\chi}})\bigr)\vert\Sel(E^{K_\chi}/\bQ)\vert/\vert E^{K_\chi}(\bQ)[p^\infty]\vert^2.
	\]
\end{theorem}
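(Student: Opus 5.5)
The plan is to follow the standard Euler characteristic computation of Greenberg (as in his Park City notes), adapted to the potentially ordinary twist $E\otimes\chi$ with the Greenberg local condition $\Gr_0$ at $p$. Write $S=\Sel_{\Gr_0}(E\otimes\chi/\bQ_\cyc)$ and $\Gamma=\Gal(\bQ_\cyc/\bQ)$. The argument is a diagram chase around the restriction map
\[
s\colon \Sel_{\Gr_0}(E\otimes\chi/\bQ)\to S^{\Gamma}.
\]
If $\ker s$ and $\coker s$ are finite and $\Sel_{\Gr_0}(E\otimes\chi/\bQ)$ is finite, then $S^\Gamma$ is finite. Since $S$ is cofinitely generated, $S^\vee$ is then $\Lambda$-torsion and its characteristic power series $f_{E\otimes\chi}$ is not divisible by $T$. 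Moreover, provided $S$ has no proper finite-index $\Lambda$-submodules (equivalently, $S_\Gamma=0$), we get
\[
f(0)\sim \frac{\#S^\Gamma}{\#S_\Gamma}=\#S^\Gamma=\frac{\#\Sel\cdot\#\coker s}{\#\ker s}.
\]

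\textbf{Step 1: the kernel.} Bound $\ker s$ by inflation--restriction: $\ker s\subset \H^1(\Gamma,(E\otimes\chi)(\bQ_\cyc)[p^\infty])$. This group has order $\#(E\otimes\chi)(\bQ)[p^\infty]$, because $\Gamma$ is procyclic and the torsion over $\bQ_\cyc$ is finite. One factor of $\vert E^{K_\chi}(\bQ)[p^\infty]\vert$ comes from here.

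\textbf{Step 2: the cokernel.} Use Poitou--Tate surjectivity of the global-to-local map defining $S$; this is available once $S^\vee$ is torsion, which holds by finiteness of $S^\Gamma$ via Nakayama. The snake lemma then identifies $\coker s$, up to a term bounded by $\H^2(\Gamma,\cdot)=0$, with the product over $v\in\Sigma$ of the kernels of the local maps $\gamma_v$.

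\emph{Primes $v\ne p$.} The group $\ker\gamma_v$ is $\H^1(\bQ_v,E\otimes\chi)/(\text{image of }\H^1_{\unr})$. Its order is the $p$-part of the Tamagawa number $c_v(E^{K_\chi})$, by the usual computation $\#\H^0(\bQ_v^{\nr}/\bQ_v,\pi_0)$. Here the additive places coming from the ramification of $\chi$ are included.

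\emph{The prime $p$.} Here $\ker\gamma_p$ relates $\H^1(\bQ_p,\cdot)/\Gr_0$ to $\H^1(\bQ_{\cyc,p},\cdot)/\Gr_0$. The term $\vert\widetilde{E}(\F_p)[p^\infty]\vert^2$-type contributions are replaced by $\H^0$ of the $\Gr_0$ quotient $D=(E\otimes\chi)[p^\infty]/F^+$. Because $\chi$ is ramified at $p$ (or at least nontrivially twists the unramified quotient), $D^{G_{\bQ_p}}$ is finite, and the local contribution should match the second factor of global torsion after applying the global duality/Cassels--Tate pairing. Standard bookkeeping then gives the square of the torsion in the denominator.

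\textbf{Step 3: translation to $E^{K_\chi}$ and no finite submodules.} Identify $E\otimes\chi$ with $E^{K_\chi}$, so that $\Sel_{\Gr_0}(E\otimes\chi/\bQ)=\Sel(E^{K_\chi}/\bQ)$; I expect this comparison of local conditions at $p$ to be part of the setup. Then prove $S_\Gamma=0$, i.e.\ that $S$ has no proper finite-index submodule, by Greenberg's criterion using surjectivity of the localization map together with the finiteness of $\H^0(\bQ,\cdot)$ of the dual. If the criterion fails, the $\#S_\Gamma$ factor instead gets absorbed and is handled through the same sequence.

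The main obstacle is the local analysis at $p$ for the potentially ordinary, additive twist. Here the Greenberg subspace $F^+$ is defined only after restricting to $G_{\bQ_p(\chi)}$, and one must carefully compute $\H^1(\Gamma_p,\cdot)$ and $\H^0$ of $D$ over $\bQ_{\cyc,p}$. I would first work out explicitly that the unramified quotient becomes $\chi$-twisted, so that its invariants over $\bQ_p$ and $\bQ_{\cyc,p}$ are trivial when $\chi|_{G_{\bQ_p}}$ is ramified. This would make the $p$-contribution trivial and reduce the torsion factor to the global Cassels--Tate term.
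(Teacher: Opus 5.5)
\textbf{Gap in Steps 1--2: the global bookkeeping.} This is what pushes you into the unjustified Step 3. Put $A=E[p^\infty]\otimes\chi$, and let $P$ and $P(\bQ_\cyc)$ be the products of local quotients at the two levels. Let $\mathcal{G}=\im(\loc_\bQ)\subset P$ and $r\colon P\to P(\bQ_\cyc)^\Gamma$. Since $\coker(h)=0$, the snake lemma for $s,h,g$ gives the exact sequence $0\to\ker(s)\to\ker(h)\to\ker(g)\to\coker(s)\to 0$. Here $g$ is defined on the \emph{image} $\mathcal{G}$ of the base-level localization, so $\ker(g)=\ker(r)\cap\mathcal{G}$. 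The base-level localization is not surjective, so $\coker(s)$ is not $\prod_v\ker(\gamma_v)$. Also, $\ker(s)$ is only a subgroup of $\ker(h)$; it is $\ker(h)$ that has order $\vert E^{K_\chi}(\bQ)[p^\infty]\vert$. What the chase actually gives is
\[
\vert S^\Gamma\vert=\vert\Sel_{\Gr_0}(E\otimes\chi/\bQ)\vert\cdot\vert\ker(g)\vert/\vert E^{K_\chi}(\bQ)[p^\infty]\vert.
\]
The surjectivity over $\bQ_\cyc$ does not identify $\ker(g)$ with $\ker(r)$; it is used to compute $\coker(g)$.

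\textbf{Missing lemma.} You need the analogue of Greenberg's Lemma 4.7 that the paper uses:
\[
\vert\ker(g)\vert=\vert\ker(r)\vert\,\vert S_\Gamma\vert/\vert E^{K_\chi}(\bQ)[p^\infty]\vert.
\]
It comes from the snake lemma for $0\to\mathcal{G}\to P\to\coker(\loc_\bQ)\to0$ mapping to $P(\bQ_\cyc)^\Gamma$, using three facts. (a) $r$ is surjective, since the decomposition groups in $\Gamma$ have cohomological dimension $1$. (b) $\coker(g)\iso S_\Gamma$, because $h$ is surjective, $\H^1(\bQ^\Sigma/\bQ_\cyc,A)\to P(\bQ_\cyc)$ is surjective, and $\H^1(\bQ^\Sigma/\bQ_\cyc,A)_\Gamma\hookrightarrow\H^2(\bQ^\Sigma/\bQ,A)=0$. (c) Poitou--Tate duality with the self-dual Kummer conditions for $E^{K_\chi}$ identifies $\coker(\loc_\bQ)$ with the dual of $\Sel(\bQ,T_pE^{K_\chi})\iso E^{K_\chi}(\bQ)[p^\infty]$ when $\Sel(E^{K_\chi}/\bQ)$ is finite.

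This one lemma supplies the second torsion factor and cancels $\vert S_\Gamma\vert$ in $f(0)\sim\vert S^\Gamma\vert/\vert S_\Gamma\vert$, so $S_\Gamma=0$ is never needed. That matters: Greenberg's criterion for the absence of proper finite-index submodules requires $E^{K_\chi}(\bQ)[p]=0$, not merely finiteness of an $\H^0$. The statement allows such torsion, which can occur at $p=3$. Your fallback that $\vert S_\Gamma\vert$ ``gets absorbed'' is exactly this unproved lemma.

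\textbf{The prime $p$.} Your local analysis here is sound and more direct than the paper's. $\Fil^-(E[p^\infty])$ is unramified, and $\chi$ stays nontrivial on the inertia of $\bQ_{\cyc,p}$ (a degree-$2$ tame extension against a pro-$p$ one). Hence $(\Fil^-(E[p^\infty])\otimes\chi)^{G_{\bQ_{\cyc,p}}}=0$ and $\ker(r_p)=0$ uniformly. This replaces the paper's case-by-case $\pm$-decomposition over $K_\chi$. But the $p$-local term is not where the second torsion factor comes from.
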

This is~\cref{fE(0)}. The formula is general and allows torsion. The above results show that despite there is no satisfying $p$-adic $L$-function for primes of additive reduction, in the case of potentially ordinary reduction, using twists is probably one suitable approach in deriving the BSD formula.

\subsection{Methods of proof and outline of the paper}
Let us briefly explain the ideas that go into the proof of our main results. The main motivation of the paper is that, studying twists of ordinary elliptic curves helps to understand certain elliptic curves of potentially ordinary reduction, which has been less touched in the literature. More precisely, if we twist a $p$-ordinary elliptic curve $E$ by a quadratic field $K$ where $p$ \emph{ramifies}, the twisted curve $E^K$ will have additive potentially ordinary reduction at $p$. In practice, we will very often view the quadratic twist from a representation theoretic point of view, i.e., we identify\[
\rho_{E^K}\iso\rho_E\otimes\chi_K\]
for the quadratic ramified character $\chi$ associated to $K$. Both viewpoints, together with a $\pm$-decomposition of modules for $E/K$ into direct sums of those for $E/\bQ$ and $E^K/\bQ$, are crucial to our arguments.

First, in term of the vanishing of $\mu$-invariants, we can actually cover all quadratic twists of ordinary elliptic curves. We do follow the methods in~\cite{GV00}, but the reason that we can go beyond their limited cases is that there is more flexibility in just showing the vanishing of $\mu$-invariants for $E[p]$ via the characters $\phi,\psi$ in $E[p]^{ss}$, with $\phi\psi=\omega$, the mod $p$ cyclotomic character. More precisely, assume $E[p]$ fits into the short exact sequence of $G_\bQ$-modules
\[0\to \F(\phi) \to E[p]\to \F(\psi)\to 0.\]
Greenberg has defined the Selmer group for $E$ using ordinary filtration, and a compatibility check shows that one can expect an exact sequence\begin{equation}\label{Etochar}
\Sel_?(\F(\phi)) \to \Sel_\ord(E[p]) \to \Sel_?(\F(\psi)),
\end{equation}
for suitable Selmer groups of $\phi$ and $\psi$. Indeed, one should use the relaxed Selmer group for the ramified character and the strict Selmer group for the unramified character. If one takes a twist by $\chi$, then one should use the same Selmer groups\[
\Sel_?(\F(\phi\chi)) \to \Sel_\ord(E[p]\otimes\chi) \to \Sel_?(\F(\psi\chi)),
\] for $\phi\chi$ (\emph{resp. $\psi\chi$}) and $\phi$ (\emph{resp. $\psi$}), even though ramification can change after twist. Since the vanishing of $\mu$-invariant for $\Sel_\ord(E[p^\infty]\otimes\chi/\bQ_\cyc)$ is equivalent to the vanishing of $\mu$-invariant for $\Sel_\ord(E[p]\otimes\chi)$, it suffices to relate $\mu(\Sel_\ord(E[p]\otimes\chi))$ to $\mu(\Sel_?(\F(\phi\chi)))$
and $\mu(\Sel_?(\F(\psi\chi)))$.

Classical Iwasawa theory, more specifically Mazur--Wiles's theorem, says that if a (non-trivial) character is even, then its \emph{relaxed} Selmer group has vanishing $\mu$, whereas if a (non-cyclotomic) character is odd, then its \emph{strict} Selmer group has vanishing $\mu$. The condition in~\cite{GV00} is exactly that the Selmer group $\Sel_?$ in~\eqref{Etochar} determined by ramification agrees with the Selmer group determined by parity, i.e.,\begin{itemize}
	\item The unramified character should be odd after twisting by $\chi$ (using $\Sel_{str}$),
	\item The ramified character should be even after twisting by $\chi$ (using $\Sel_{\rel}$).
\end{itemize}
This is exactly what we call `admissibility' from~\cite{GV00}. But, as one may notice, Mazur--Wiles's theorem applies to all strict Selmer groups, so the only way one can see a non-trivial $\mu$ is if one uses relaxed Selmer group for a twisted character that becomes odd and ramified.

Even in the exceptional case, Ribet's lemma allows us to choose an isogenous curve $E'$ (respected by quadratic twists) such that the odd and ramified character is a quotient, rather than a submodule of $E'\otimes\chi$. Then one can use a slight generalization of the methods developed in~\cite{LY26} to show that even in this case $\Sel_\ord(E'[p]\otimes\chi)$ will still have a vanishing $\mu$-invariant, despite a positive $\mu$ in some quotient. This confirms Greenberg's $\mu=0$ conjecture for many potentially ordinary elliptic curves. We also discuss how this $\mu=0$ result can be used to verify Mazur's cotorsion conjecture.

The study of $\lambda$-invariants is much more delicate. Now the admissibility condition in~\cite{GV00} becomes crucial in deducing the relations\begin{itemize}
	\item $\lambda(\Sel^\Sigma_\ord(E[p]\otimes\chi)^\vee)=\lambda(\Sel^\Sigma_?(\F(\phi\chi))^\vee)+\lambda(\Sel^\Sigma_?(\F(\psi\chi))^\vee)$,
	\item $\lambda(\cL^\Sigma(E,\chi))=\lambda(\cL^\Sigma(\phi\chi))+\lambda(\cL^\Sigma(\phi\chi)))$
\end{itemize}
 between the imprimitive objects on the algebraic and analytic sides, since otherwise the above equations are not expected to hold. Indeed, on the algebraic side, for the characters, there will be a mismatch between the Selmer groups determined by ramification and those by parity, and Mazur--Wiles's theorem could not be applied to the appropriate Selmer groups. On the analytic side, non-admissibility would break congruence between certain modular forms due to bad behavior of canonical periods. There doesn't seem to be a reasonable way to compare Iwasawa invariants in the non-admissible case.
 
 Therefore, we content ourselves with just comparing Iwasawa invariants in the admissible case. This is still beyond the scope of~\cite{GV00} since they only studied the trivial twist. However, all the ingredients are essentially available in their work. We still briefly explain their main idea for completeness.
 
 First, Kato proved one divisibility of the cyclotomic Iwasawa Main Conjecture\[\Char_{\Lambda}(\Sel_{\Gr_0}(E\otimes\omega^i/\bQ_\cyc)^\vee)\supset (\cL(E,\omega^i))
 \]
 for an ordinary elliptic curve twisted by powers of $\omega$. The idea in~\cite{GV00} is that if one knows the Iwasawa $\lambda$- and $\mu$-invariants of both sides are equal, then this divisibility must in fact be an equality. They achieved this by showing the $\mu$-invariants of both sides are zero, and that a Main Conjecture is equivalent to its `imprimitive' version. More precisely, it suffices to show\[
 \lambda(\Sel^\Sigma_\ord(E[p]\otimes\chi)^\vee)=\lambda(\cL^\Sigma(E,\chi)).
 \]
 Without loss of generality, we may take $\chi$ to be the quadratic character $\omega^{\frac{p-1}{2}}$, since the unramified part can be absorbed in $E$ (twisted to another ordinary curve if necessary). Now Mazur--Wiles's theorem tell us that\begin{itemize}
 	\item $\lambda(\Sel^\Sigma_?(\F(\phi\chi))^\vee)=\lambda(\cL^\Sigma(\phi\chi))$,
 	\item $\lambda(\Sel^\Sigma_?(\F(\psi\chi))^\vee)=\lambda(\cL^\Sigma(\psi\chi))$,
 \end{itemize}
 for appropriate choice of Selmer groups, as long as $\phi\chi$ and $\psi\chi$ are not the trivial character. Together with the relation between the $\lambda$-invariants of $E\otimes\chi$ and $\phi\chi$, $\psi\chi$ established earlier, we can deduce the Iwasawa Main Conjecture
 \[\Char_{\Lambda}(\Sel_{\Gr_0}(E\otimes\omega^i/\bQ_\cyc)^\vee)=(\cL(E,\omega^i))
 \]as in~\cite{GV00}.
 
 It is worth mentioning that, if $E$ has ordinary reduction, then the non-admissible case (with trivial twist) can be studied using Beilinson--Flach elements in a $\bZ_p^2$-extension of an imaginary quadratic field. One can avoid the comparison of Iwasawa invariants and still obtain the cyclotomic Iwasawa Main Conjecture from its anticyclotomic counterpart. See~\cite{CGS},~\cite{KY24}. It is not clear to us at the moment if this method is compatible with taking twists.
 
 Finally, we explain how to go from the Iwasawa Main Conjecture to the $p$-part BSD formula. It naturally boils down to the comparison of a calculation of Euler characteristic on the algebraic side with the interpolation on the analytic side. Usually, one would need to construct a $p$-adic $L$-function for the curve $E^K$, but $p$-adic $L$-function for elliptic curves in the additive reduction case is extremely hard. We attempt to realize the computation using the twist $E\otimes\chi$, and show that the algebraic side and the analytic side do combine beautifully and lead to the $p$-part BSD formula in rank $0$ for $E^K$ having additive potentially ordinary reduction.
 
\subsection{Relation to previous works}
This work is a natural extension of~\cite{GV00}, where all the main ideas originate from, including the comparison of Iwasawa invariants in the admissible settings. We use the branched $p$-adic $L$-functions as well as one divisibility in Kato's work~\cite{Kato}, and the calculation of Euler characteristic is inspired from Greenberg's work~\cite{Greenberg1999}. 

In the non-admissible settings, the computation of $\mu$-invariant is inspired by the method developed in~\cite{LY26}. 

This work will complement a forthcoming work of the author~\cite{XY26}, which focuses on potentially good reduction.

\subsection{Notations}
For any subextension $L/\Q$ of $\bar{\bQ}/\bQ$, we let $G_L\coloneq \Gal(\bar{\bQ}/L)$ denote its absolute Galois group. For a cohomology group $\H^i(\cdot,-)$, we write $\H^i(L,-)$ in place of $\H^i(G_L,-)$. Denote by $(-)^\vee=\Hom_\cts(-,\bQ_p/\bZ_p)$ the Pontryagin dual. 

\subsection{Future Works}
In this work, only quadratic twists of ordinary elliptic curves are considered, covering most potentially multiplicative reduction and many potentially good ordinary reduction. It would be natural to see if the same arguments apply to more general twists, hoping to cover all potentially ordinary cases. 

On the other hand, one can ask if the ideas in~\cite{CGS} can be adapted using twists to attack the non-admissible setting. We would also hope to study this in a future work.

Computing the $\mu$-invariants for general twists and checking Greenberg's $\mu=0$ conjecture at all potentially ordinary Eisenstein primes will also be studied in a future project.

\renewcommand{\thetheorem}{\arabic{section}.\arabic{subsection}.\arabic{theorem}}

\section{Preliminary}\label{preliminary}
In this section, we record relevant facts about quadratic twists of ordinary elliptic curves and modular forms.

\subsection{Ordinary reduction at Eisenstein primes}
Let $E$ be an elliptic curve. We say $p$ is an \emph{Eisenstein} prime for $E$ is $E[p]$ is reducible as a $G_\bQ$-module. In other words, there is an exact sequence of $G_\bQ$-modules\begin{equation}\label{red}
0\to \F(\phi)\to E[p]\to \F(\psi)\to 0
\end{equation}
for $\phi,\psi:G_\bQ\to\Aut(\F)$, where $\F(-)$ denotes the module $\F$ with $G_\bQ$-action given by the character. Following Greenberg's convention~\cite{Greenberg1989}, we say $p$ is an \emph{ordinary} prime for $E$ if $A\defeq E[p^\infty]$ has a ramified $G_{\bQ_p}$-submodule $\Fil^+(A)$ of $\bZ_p$-corank $1$ with an unramified quotient $\Fil^-(A)\defeq A/\Fil^+(A)$. Let $f$ be the weight $2$ newform associated to $E$. Then one knows that $E$ is ordinary at $p$ if and only if the $p$-th Fourier coefficient $a_p(f)$ of $f$ is a $p$-adic unit. By a result of Mazur~\cite{Mazur}, good Eisenstein primes are ordinary. In particular, supersingular reduction cannot exist at Eisenstein primes, and ordinary primes are exactly the non-additive primes.

The ordinary filtration naturally propagates to the $p$-torsion level. That is, there is a filtration on $E[p]$, so that it has a ramified one dimensional $G_{\bQ_p}$-sub-representation $\Fil^+(E[p])$ with an unramified one-dimensional quotient $\Fil^-(E[p])\defeq E[p]/\Fil^+(E[p])$. For Eisenstein primes, it is clear that one of $\phi,\psi$ in~\eqref{red} will be ramified (with the other unramified), and the ramified part restricts to $\Fil^+(E[p])$ over $G_{\bQ_p}$. As $\phi\psi=\omega$, the ramified character is tamely ramified.

\subsection{Quadratic twists of newforms and elliptic curves}\label{twistvsK}
Let $f\in S_2^\new(\Gamma_0(N))$ be a newform of weight $2$. Then its twist by a quadratic Dirichlet character $\chi$ is another newform $f_\chi\defeq f\otimes \chi\in S_2^\new(\Gamma_0(N'))$, where $N'$ divides $N\cdot \cond(\chi)^2$, provided that $\chi$ is primitive. In general, twisting by a non-primitive character will yield an oldform. If $\chi$ is ramified at $p$, i.e., $p\mid \cond(\chi)$, then the level of $f_\chi$ will be divisible by $p^2$, and $f_\chi$ will have additive reduction at $p$:\begin{itemize}
	\item If $f$ has good ordinary reduction at $p$, $f_\chi$ has potentially good ordinary reduction at $p$;
	\item If $f$ has multiplicative reduction at $p$, $f_\chi$ has potentially multiplicative reduction at $p$.
\end{itemize} 
The following provides a partial converse of the above statement.
\begin{proposition}
	Let $E/\bQ$ be an elliptic curve of potentially ordinary reduction at $p$ with associated newform $f$, then there is a Dirichlet character $\chi$ such that $f\otimes \chi^{-1}$ is ordinary at $p$.
\end{proposition}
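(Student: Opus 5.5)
Work with the local Galois representation $V=V_pE$ restricted to $G_{\bQ_p}$, via local Langlands (local–global compatibility for the newform $f$). The plan is to show that potential ordinarity forces the local representation to be a twist of an ordinary one by a finite-order character. Then globalize that character to a Dirichlet character $\chi$, and check that $f\otimes\chi^{-1}$ is ordinary.

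\textbf{Local analysis.} By definition of potentially ordinary reduction, there is a finite extension $L/\bQ_p$ over which $E$ acquires good ordinary or multiplicative reduction. Over $G_L$ the Tate module then has a $G_L$-stable line on which inertia acts by $\epsilon\cdot(\text{finite order})$, with unramified quotient. I would pass to the Galois closure and use that $E$ has potentially good or potentially multiplicative reduction, treating the two cases separately.

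\emph{Potentially multiplicative case.} The Tate parametrization shows $E$ is a quadratic twist of a Tate curve over $\bQ_p$. So $V|_{G_{\bQ_p}}\iso V_{\mathrm{Tate}}\otimes\eta$ for a quadratic character $\eta$ of $G_{\bQ_p}$.

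\emph{Potentially good case.} The Weil–Deligne representation attached to $V$ is a 2-dimensional Frobenius-semisimple representation with $N=0$, trivial on an open subgroup of inertia. Ordinarity over $L$ means the Frobenius eigenvalues on $D_{\mathrm{pst}}$ have slopes $0$ and $1$, i.e.\ $V$ is potentially reducible. I claim the inertia type cannot be irreducible (supercuspidal). If it were, inertia would act irreducibly, or through a character of a ramified quadratic extension not extending to $\bQ_p$. Then no $G_{\bQ_p}$-stable line could exist after restriction to any $L$, since the inertia-stable lines are permuted and the two distinct slope lines must each be $G_{\bQ_p}$-stable. Their uniqueness is forced by weak admissibility/slopes. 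Hence the type is principal series $\alpha\oplus\beta$ with $\alpha\beta$ trivial on inertia (the determinant is cyclotomic). Therefore $V|_{G_{\bQ_p}}$ is an extension of $\epsilon\,\beta\,(\text{unr})$ by $\alpha\,(\text{unr})$ twisted appropriately, and $V\otimes\alpha^{-1}|_{I_p}$ has the ordinary shape.

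\textbf{Globalization.} Let $\eta=\alpha|_{I_p}$ (or the quadratic $\eta$ above), a finite-order character of $\bZ_p^\times$ via local class field theory. Choose the Dirichlet character $\chi$ of $p$-power conductor with $\chi|_{\bZ_p^\times}=\eta$; this exists since $(\bZ/p^n)^\times\iso\bZ_p^\times/(1+p^n\bZ_p)$. Then $\rho_f\otimes\chi^{-1}$ restricted to $I_p$ has a filtration with one graded piece unramified and the other $\epsilon$ times a character trivial on inertia. By local–global compatibility for the newform $g$ that is the primitive form attached to $f\otimes\chi^{-1}$, this means $g$ has good or multiplicative reduction at $p$ with unit $a_p(g)$, i.e.\ $g$ is ordinary at $p$. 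Since twisting by $\chi$ does not change the prime-to-$p$ behaviour, no other issue arises.

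\textbf{Main obstacle.} The key step is ruling out the supercuspidal type in the potentially good case, i.e.\ showing potential ordinarity forces a $G_{\bQ_p}$-stable ordinary line. I would argue that the slope-$0$ filtration step of $D_{\mathrm{cris}}$ over $L$ is unique, hence $\Gal(L/\bQ_p)$-stable. This yields a $G_{\bQ_p}$-stable line in $V$, so $V|_{G_{\bQ_p}}$ is reducible. A reducible representation has abelian inertia type given by the characters on the line and the quotient, which settles the claim.
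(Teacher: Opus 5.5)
Your route is genuinely different from the paper's. The paper gives no argument of its own; it quotes \cite[Theorem 3.1.1]{KY24b}, itself a consequence of \cite[Proposition 12.11.5(iv)]{NekovarSelmerComplexes}. You instead reconstruct the local argument behind that citation in three steps: reducibility of $V|_{G_{\bQ_p}}$, removal of the ramification of one graded piece by a Dirichlet character of $p$-power conductor, and local--global compatibility at $p$. This makes the proof self-contained and shows why $\chi$ is quadratic exactly when the inertia type is quadratic. The outline is sound.

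The reducibility step is easier than you make it. Take $L/\bQ_p$ Galois. The line of $V$ on which $I_L$ acts by $\epsilon$ is unique, because the quotient has trivial $I_L$-action and $\epsilon|_{I_L}\neq 1$. For $\sigma\in G_{\bQ_p}$, the translate of this line is again $I_L$-stable with character $h\mapsto\epsilon(\sigma^{-1}h\sigma)=\epsilon(h)$, hence equals it. This handles both reduction types at once, with no $D_{\mathrm{pst}}$, weak admissibility, or supercuspidal discussion needed.

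Your last step, however, is wrong as stated and needs repair.
\begin{itemize}
\item \emph{The graded pieces.} Since $\alpha\beta=1$ on $I_p$, the pieces of $V|_{I_p}$ are $\epsilon\alpha^{-1}$ and $\alpha$. The $\epsilon$-piece is the $G_{\bQ_p}$-stable line, so your phrase ``extension of $\epsilon\beta$ by $\alpha$'' has the orientation backwards. Your choice $\chi|_{I_p}=\alpha$ does correctly make the quotient unramified.
\item \emph{The twisted sub is not $\epsilon$ on inertia.} Twisting by $\chi^{-1}$ multiplies both pieces by $\alpha^{-1}$, giving $\epsilon\alpha^{-2}=\epsilon\chi^{-2}$ and $1$. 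It does not give ``$\epsilon$ times a character trivial on inertia''. That would require $\alpha^2|_{I_p}=1$.
\item \emph{A counterexample to your claim.} The condition $\alpha^2|_{I_p}=1$ fails for potentially good ordinary curves with semistability defect $e\in\{3,4,6\}$, which occur when $p\equiv 1\bmod e$. An example is $y^2=x^3+7$ at $p=7$, which has CM by $\mathbf{Z}[\zeta_3]$ with $7$ split, and $e=6$. There $\chi$ must have order $e$, and the newform $g$ attached to $f\otimes\chi^{-1}$ has nebentypus $\chi^{-2}$, ramified at $p$. So $g$ has neither good nor multiplicative reduction at $p$, contrary to what you assert.
\item \emph{What survives suffices.} $\rho_g|_{G_{\bQ_p}}$ has an unramified quotient, with sub equal to $\epsilon$ times the $p$-part of the nebentypus. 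Local--global compatibility at $p$ (Saito) then shows that the unramified Weil--Deligne character of $g$ has slope $0$, so $a_p(g)$ is a $p$-adic unit. That is exactly ``ordinary at $p$''.
\end{itemize}
The corrected picture matches the paper's remark after the proposition: a general twist multiplies the nebentypus by $\chi^{2}$, so $\chi$ is forced to be quadratic only in the potentially multiplicative case.
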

\begin{proof}
 This is~\cite[Theorem 3.1.1]{KY24b}, which is a consequence of~\cite[Proposition 12.11.5(iv)]{NekovarSelmerComplexes}.
\end{proof}
In particular, if $f$ corresponds to an elliptic curve of potentially multiplicative reduction, then $\chi$ can be taken to be a quadratic character, since a general twist will multiply the Nebentypus by its square, and newforms corresponding to multiplicative reduction must have unramified Nebentypus. Indeed, any ramified character $\theta:G_\bQ\to\F^\times$ (since $f$ has rational coefficients) factors into a power of the mod $p$-cyclotomic character and an unramified character, and one can take $\chi$ to be the cyclotomic part, as unramified twists preserve ordinariness, which then must obviously be quadratic. One should notice that this proposition does not claim the twist is necessarily primitive, so $f\otimes\chi^{-1}$ itself might not be a newform. However, writing $\tilde{f}$ for the corresponding newform, we still have the Galois representation theoretic equality \[\rho_f=\rho_{\tilde{f}}\otimes\chi,\]
where by an abuse of notation $\chi$ denotes $\rho_\chi$, the Galois representation attached to $\chi$. Thus, the newform associated to an elliptic curve of potentially multiplicative reduction always comes from a quadratic twist of a $p$-multiplicative newform in the Galois theoretic sense.

Note that each non-trivial quadratic character $\chi$ corresponds to a quadratic field $K_\chi$ over which it becomes trivial. More precisely, if $\chi_\prim:G_\bQ\to\{\pm 1\}$ denotes the primitive quadratic character associated to $\chi$, then $K_\chi=\bQ^{\ker(\chi_\prim)}$. Writing $K_\chi=\bQ(\sqrt{D})$, then $\chi_\prim$ agrees with the Kronecker character $(\frac{D}{\cdot})$ with conductor $\vert D\vert$, and one knows $\rho_{\chi_\prim}=\rho_\chi$. If $f$ is associated to an elliptic curve $E$ with potentially multiplicative reduction at $p$, then $E^{K_\chi}=E^{(D)}$ corresponds to $\rho_f\otimes \rho_{\chi_\prim}=\rho_f\otimes\chi=\rho_f\otimes \chi^{-1}$ and is an elliptic curve of multiplicative reduction at $p$. Thus we have showed the following.

\begin{proposition}\label{untwist}
	\begin{enumerate}
		\item Twisting an elliptic curve $E$ by an quadratic character $\chi$ in the sense of Galois representation is the same as the literal quadratic twist of $E$ by the quadratic field $K_\chi$ associated to $\chi$. An unramified twist preserves ordinary reduction and a ramified twist changes ordinary reduction to potentially ordinary reduction which becomes ordinary over $K_\chi$.
		\item As a partial converse, let $E$ be an elliptic curve of potentially multiplicative reduction at $p$. Then there is a quadratic field $K$ such that its quadratic twist $E^K$ is an elliptic curve of multiplicative reduction at $p$.
	\end{enumerate}
	
\end{proposition}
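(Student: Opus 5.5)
Part (1) will follow from the standard comparison between the Galois representation of a quadratic twist and the literal twisted curve. Part (2) will follow from the previous proposition together with the Nebentypus discussion that follows it.

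For (1), let $\chi$ be quadratic with primitive associated character $\chi_\prim$ and field $K_\chi=\bQ(\sqrt{D})=\bQ^{\ker\chi_\prim}$. Write $E:y^2=x^3+ax+b$ and $E^{K_\chi}:Dy^2=x^3+ax+b$. The isomorphism $(x,y)\mapsto(x,\sqrt{D}\,y)$ is defined over $K_\chi$. For $\sigma\in G_\bQ$ it intertwines the Galois actions up to the sign $\sigma(\sqrt D)/\sqrt D=\chi_\prim(\sigma)$. Hence $T_p(E^{K_\chi})\iso T_p(E)\otimes\chi_\prim$ as $G_\bQ$-modules. Since $\rho_{\chi_\prim}=\rho_\chi$, twisting in the Galois-representation sense is exactly the literal twist.

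Next I treat the reduction types, reading off the ordinary filtration. If $\chi$ is unramified at $p$, then $\chi|_{G_{\bQ_p}}$ is unramified. Twisting $\Fil^+$ and $\Fil^-$ by it keeps $\Fil^+$ ramified of corank $1$ and $\Fil^-$ unramified, so ordinarity is preserved (equivalently, $a_p$ changes by the unit $\chi(p)$). If $\chi$ is ramified at $p$, the twisted filtration is still a filtration, now by characters of finite order times unramified. So $E^{K_\chi}$ is potentially ordinary, and the inertia action on $\Fil^-\otimes\chi$ becomes trivial over $K_\chi$; concretely $E^{K_\chi}\iso E$ over $K_\chi$. Reduction is additive, since $\Fil^-\otimes\chi$ is ramified (conductor divisible by $p^2$).

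For (2), apply the previous proposition to get a Dirichlet character $\chi$ with $f\otimes\chi^{-1}$ ordinary at $p$. The step needing care is showing $\chi$ can be taken quadratic. The newform $\tilde f$ attached to $f\otimes\chi^{-1}$ has Nebentypus $\chi^{-2}$ times that of $f$, which is trivial. Since $\tilde f$ corresponds to multiplicative reduction, which is preserved under potential-type considerations because the inertia action on $\Fil^-$ is through a finite-order character, its Nebentypus is unramified at $p$, so $\chi^2$ is unramified at $p$. Then I decompose the $p$-part of $\chi$ as a power of $\omega$ times an unramified character. I discard the unramified part, since unramified twists preserve ordinariness by (1). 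The remaining power of $\omega$ has square unramified, hence equals $\omega^{(p-1)/2}$ or is trivial; it is trivial only if $E$ were already multiplicative, in which case take $K$ arbitrary unramified. This leaves a quadratic $\chi$, and by (1) the curve $E^{K}$ with $K=K_\chi$ realizes $\rho_f\otimes\chi=\rho_{\tilde f}$, which has multiplicative reduction at $p$.
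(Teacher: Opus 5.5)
Your proposal is correct and follows essentially the paper's own route: (1) is the identification $\rho_{E^{K_\chi}}\iso\rho_E\otimes\chi_\prim$ with $\rho_{\chi_\prim}=\rho_\chi$, plus reading off the ordinary filtration; (2) combines the preceding proposition with the Nebentypus argument, which forces the $p$-part of $\chi$ to be $\omega^{\frac{p-1}{2}}$ (or trivial) once the part unramified at $p$ is discarded. Two small tidy-ups: the $p$-part $\chi_p$ could a priori have a wild factor, but $\chi_p^2$ is unramified at $p$ with $p$-power conductor, hence trivial, so $\chi_p\in\{\mathbf{1},\omega^{\frac{p-1}{2}}\}$ directly since $p$ is odd; and the multiplicative type of $\tilde f$ at $p$ should be justified by twist-invariance of potential multiplicativity (e.g.\ $j(E^{K})=j(E)$, or nonvanishing monodromy) together with ordinarity, not by the finite-order inertia action on $\Fil^-$, which only encodes potential ordinarity.
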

The same conclusion that an elliptic curve of potentially multiplicative reduction gains ordinary reduction over a quadratic field can also be seen using classifications of Kodaira types.

\subsection{Admissible twists}\label{defadm}
We adopt the notion of admissible twists from~\cite{GV00}, which originates from the study of periods.

\begin{definition}\label{defadmissible}
	We say that a triple $(E,p,\chi)$ consisting of an elliptic curve $E$, an odd prime $p$ and a Dirichlet character $\phi$ is \emph{admissible} if:\begin{itemize}
		\item $E[p]$ is reducible and fits into an exact sequence\[
		0\to \F(\phi)\to E[p]\to \F(\psi)\to 0,\]
		for some characters $\phi,\psi:G_\bQ\to\Aut(\F_p)$ with $\phi\psi=\omega$, the mod-$p$ cyclotomic character. In particular, exactly one of $\phi$,$\psi$ is odd.
		\item $E$ is ordinary at $p$, i.e., locally over $G_{\bQ_p}$, $E[p]$ has a ramified subline (which may or may not be the restriction of $\F(\phi)$, as the above sequence might become split locally) with an unramified quotient. In particular, exactly one of $\phi$,$\psi$ is ramified.
		\item $\chi$ is unramified or tamely ramified at $p$ with $\theta(-1)=-\chi(-1)$ where $\theta$ is the unramified character in $\phi$ and $\psi$, i.e., $\chi$ has opposite parity from the unramified character.
		\end{itemize}
	When these hold, we shall simply say that $E\otimes\chi$ is an admissible twist of $E$. 
\end{definition}
This agrees with the assumption in~\cite{GV00} that \begin{equation*}\tag{GV}\label{GV}
\phi\text{ is either unramified and odd, or ramified and even}
\end{equation*} when they take $\chi$ to be the trivial character. Under the more general admissibility assumption, the periods behave well and there is a congruence\[
\cL(E,\chi)\equiv \cL_{\phi\chi}\cL_{\psi\chi}\pmod{p\Lambda}
\]
between the twisted $p$-adic $L$-functions of $E$ and those of the characters appearing in $E[p]$. These are discussed beautifully in~\cite[section 3]{GV00}.

In particular, if one takes $\chi$ to be a tamely ramified character, the above congruence allows us to compare Iwasawa invariants even in some additive reduction cases. 

From now on, we shall restrict our attention to admissible quadratic twists of ordinary elliptic curves.

\section{Iwasawa theory for elliptic curves of ordinary reduction}\label{2}
In this section, we recall Selmer groups and $p$-adic $L$-functions in Iwasawa theory for elliptic curves discussed in~\cite{GV00}. 

\subsection{Cyclotomic Iwasawa theory}
We say an algebraic extension of a number field is a $\bZ_p$-extension if the Galois group is isomorphic to the $p$-adic integers. There is a unique $\bZ_p$-extension of $\bQ$, the cyclotomic $\bZ_p$-extension $\bQ_\cyc$, obtained as $\bQ_\cyc\defeq\bQ(\mu_{p^\infty})^{\Gal(\bQ(\mu_p)/\bQ)}$, where $\mu_p$ (resp. \emph{$\mu_{p^\infty}$}) denotes the $p$-th (resp. \emph{$p$-power-th}) roots of unity. For any number field $F$, denote by $F_\cyc\defeq F\bQ_\cyc$ its cyclotomic $\bZ_p$-extension with Galois group $\Gamma_F\defeq \Gal(F_\cyc/F)\iso\bZ_p$, and let $\Lambda_F\defeq \bZ_p\llbracket \Gamma_F\rrbracket$ be the corresponding Iwasawa algebra, which is usually identified with the formal power series ring $\bZ_p\llbracket T\rrbracket$ when a topological generator $\gamma$ of $\Gamma_F$ is identified with $1+T$. The $p$-adic $L$-functions of an elliptic curve to be recalled will be elements of $\Lambda_F$, while the Selmer groups are known to be cofinitely generated and cotorsion over $\Lambda_F$, meaning their Pontryagin duals are finitely generated and torsion. For a torsion $\Lambda_F$ module $X$ there is a $\Lambda_F$-morphism with finite kernel and cokernel\[
X\to \bigoplus_{i=1}^n \Lambda_F/(f_i)\oplus \bigoplus_{j=1}^m \Lambda_F/(p^{m_j}),
\]
where $f_i$ are \emph{distinguished polynomials} with $f_i(T)\equiv T^{\deg(f_i)}\pmod{p}$, and $m_j$ are non-negative integers. The values $\lambda(X)\defeq\sum_i\deg(f_i)$ and $\mu(X)\defeq\sum_j m_j$ are called the Iwasawa $\lambda$- and $\mu$-invariants of $X$ respectively. Define the characteristc polynomial of $X$ to be $p^{\mu(X)}\prod_i f_i$, and the \emph{characteristic ideal} $\Char_{\Lambda_F}(X)$ of $X$ to be the $\Lambda$ ideal generated by its characteristic polynomial.

If two torsion $\Lambda_F$-modules $X_1$ and $X_2$ satisfy $\Char_{\Lambda_F}(X_1)\subset \Char_{\Lambda_F}(X_2)$ and $\mu(X_1)=\mu(X_2)$, $\lambda(X_1)=\lambda(X_2)$, then it follows that $\Char_{\Lambda_F}(X_1)=\Char_{\Lambda_F}(X_2)$.

\subsection{The algebraic side: Selmer groups}
In this section, we define the Selmer groups and discuss some of its important properties. The goal is to describe the Iwasawa theoretic results on the algebraic side which will be compared to those on the analytic side in the next section. We will restrict our attention to the rational numbers $\bQ$ and certain quadratic fields $K$.

Let $F$ be $\bQ$ or $K$. Let $\Sigma$ be a finite set of places of $F$ containing those above $p$, those where $E$ has bad reduction and the infinite places. We define \emph{Greenberg's} Selmer group with ordinary local conditions for $E$ over $F_\cyc$, $\Sel_\Gr(E/F_\cyc)$, as the kernel of the global-to-local map\[
	\H^1(F^\Sigma/F_\cyc, E[p^\infty]) \to \prod_{v\mid p}\H^1(I_v,\Fil^-(E[p^\infty]))\times\prod_{v\nmid p}\H^1(F_{\cyc,v},E[p^\infty]) ,\]
where $F^\Sigma$ is the maximal extension of $F$ unramified outside $\Sigma$ and $I_v$ denotes the inertia subgroup of $G_{F_\cyc}$. The mapping to local at $p$ part is understood as the composition\[
\H^1(F^\Sigma/F_\cyc,E[p^\infty])\to \prod_{v\mid p}\H^1(F_{\cyc,v},E[p^\infty])\to \prod_{v\mid p}\H^1(I_v,\Fil^-(E[p^\infty]))
\]Its $\Sigma$-\emph{imprimitive} version is defined as\[
\Sel^\Sigma_\Gr(E/F_\cyc)=\ker\big(\H^1(F^\Sigma/F_\cyc, E[p^\infty]) \to \prod_{v\mid p}\H^1(I_v,\Fil^-(E[p^\infty]))\big).
\]

For later use, we also define the residual Selmer groups\[
	\Sel^\Sigma_\Gr(E[p]/F_\cyc)=\ker\big(\H^1(F^\Sigma/F_\cyc, E[p]) \to \prod_{v\mid p}\H^1(I_v,\Fil^-(E[p]))\big).
\]

\begin{remark}
When $E$ has good ordinary or non-split multiplicative reduction, one can replace $\prod_{v\mid p}\H^1(I_v,\Fil^-(E[p^\infty]))$ by $\prod_{v\mid p}\H^1(F_{\cyc,v},\Fil^-(E[p^\infty]))$(which gives what we call `strict' Greenberg's Selmer group and denote by $\Gr_0$) without affecting the Selmer groups. When $E$ has split multiplicative reduction and $F=\bQ$, however, the strict Selmer group is strictly smaller than Greenberg's Selmer group, and their difference corresponds to the `trivial zero' of the $p$-adic $L$-function in the next subsection.
\end{remark}

Finally, one can always define the usual $p$-infinity Selmer group $\Sel(E/F)$ coming from Kummer images for any algebraic field $F$,\[
\Sel(E/F)\defeq\ker\bigl(\H^1(F^\Sigma/F,E[p^\infty])\to\prod_v\frac{\H^1(F_{\cyc,v},E[p^\infty])}{\im(\kappa_v)}\bigr),
\]
and its imprimitive version
\[
\Sel^\Sigma(E/F)\defeq\ker\bigl(\H^1(F^\Sigma/F,E[p^\infty])\to\frac{\H^1(F_{\cyc,p},E[p^\infty])}{\im(\kappa_p)}\bigr).
\]

We now collect some well-known facts about the Selmer groups.
\begin{proposition}\label{Selinfo}
 \begin{enumerate}
 	\item If $E$ has ordinary reduction at all places above $p$, then the above strict Selmer groug $\Sel_{\Gr_0}(E/F_\cyc)$ agree with the usual Selmer group $\Sel(E/F_\cyc)$ defined via Kummer image.
 	\item The $\mu$-invariant of $\Sel_{\Gr_0}(E/F_\cyc)^\vee$ equals the $\mu$-invariant of $\Sel^\Sigma_{\Gr_0}(E/F_\cyc)^\vee$, and the difference between their $\lambda$-invariants can be explicitly computed.
 	\item The $\mu$-invariant of $\Sel^\Sigma_{\Gr_0}(E/F_\cyc)^\vee$ is $0$ if and only if the $\mu$-invariant of $\Sel^\Sigma_{\Gr_0}(E[p]/F_\cyc)^\vee$ is $0$.
 	\item Conjugations by the non-trivial element of $\Gal(K_\cyc/\bQ_\cyc)$ and the non-trivial element of $\Gal(K_{\cyc,p}/\bQ_{\cyc,p})$ split $\Sel^\Sigma(E/K_\cyc)$ into a direct sum of $\pm$-eigencomponents which yields a decomposition\[
 	\Sel^\Sigma(E/K_\cyc)=\Sel^\Sigma(E/\bQ_\cyc)\oplus\Sel^\Sigma(E^K/\bQ_\cyc).
 	\]
 	In particular, if $E$ has ordinary reduction at all primes above $p$, there is a decomposition for imprimitive strict Greenberg's Selmer group over $K$\[
 	\Sel^\Sigma_{\Gr_0}(E/K_\cyc)=\Sel^\Sigma(E/\bQ_\cyc)\oplus\Sel^\Sigma(E^K/\bQ_\cyc),
 	\]
 	where at least one of $\Sel^\Sigma(E/\bQ_\cyc)$ and $\Sel^\Sigma(E^K/\bQ_\cyc)$ is identified with the corresponding imprimitive strict Greenberg's Selmer group.
 \end{enumerate}
\end{proposition}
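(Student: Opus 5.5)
We treat the four parts separately. Parts (1)–(3) are standard results of Greenberg; part (4) is an argument with the $\pm$-eigenspaces of $\Gal(K/\bQ)$, using that $p$ is odd.

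For (1), we compare local conditions at each $v\mid p$. By Greenberg's description of the Kummer image in the ordinary case (\cite{Greenberg1999}), over $F_{\cyc,v}$ the image of $\kappa_v$ is the maximal divisible subgroup of $\ker\bigl(\H^1(F_{\cyc,v},E[p^\infty])\to\H^1(F_{\cyc,v},\Fil^-(E[p^\infty]))\bigr)$. That kernel is the image of $\H^1(F_{\cyc,v},\Fil^+)$. We check it is already divisible: its cokernel-type obstruction is $\H^0(F_{\cyc,v},\Fil^-)$ modulo the image of $\H^0(F_{\cyc,v},E[p^\infty])$, and this is finite. Since $\H^1(F_{\cyc,v},\Fil^+)$ is divisible (its $\H^2$ vanishes by local duality over the deeply ramified field), the two conditions agree. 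At $v\nmid p$ the local Kummer image over $F_{\cyc,v}$ is $0$, so the local conditions away from $p$ coincide as well.

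For (2) and (3), we use Greenberg's standard argument. By the surjectivity of the global-to-local map for the imprimitive Selmer group (available once $\Sel$ is $\Lambda$-cotorsion), we get the exact sequence
\[0\to \Sel_{\Gr_0}(E/F_\cyc)\to\Sel^\Sigma_{\Gr_0}(E/F_\cyc)\to\prod_{v\in\Sigma,\,v\nmid p}\H^1(F_{\cyc,v},E[p^\infty])\to 0.\]
Each local factor for $v\nmid p$ has $\Lambda$-torsion dual with $\mu=0$, and its $\lambda$-invariant equals the number of roots of the Euler factor $P_v(\ell^{-1}\gamma)$ that are $p$-adic units. This gives the explicit difference of $\lambda$-invariants and the equality of $\mu$-invariants. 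For (3), consider the map $\Sel^\Sigma(E[p])\to\Sel^\Sigma(E[p^\infty])[p]$ induced by $E[p]\hookrightarrow E[p^\infty]$. Its kernel and cokernel are finite, being controlled by $\H^0$ of $E[p^\infty]$ and of $\Fil^-$ over $F_\cyc$ and over the local fields. Hence $\mu(\Sel^\Sigma(E[p^\infty])^\vee)=0$ if and only if the dual of the $p$-torsion is finitely generated over $\bZ_p$, which is equivalent to $\mu(\Sel^\Sigma(E[p])^\vee)=0$.

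For (4), let $\Delta=\Gal(K_\cyc/\bQ_\cyc)$, which has order $2$ prime to $p$. The Hochschild–Serre spectral sequence then gives
\[\H^1(K^\Sigma/K_\cyc,E[p^\infty])^{\Delta}=\H^1(\bQ^\Sigma/\bQ_\cyc,E[p^\infty]),\]
and the $(-1)$-eigenspace equals $\H^1(\bQ^\Sigma/\bQ_\cyc,E[p^\infty]\otimes\chi)=\H^1(\bQ^\Sigma/\bQ_\cyc,E^K[p^\infty])$. Doing the same locally at $p$ (with the decomposition group, or with induction when $p$ splits in $K$) decomposes the local quotients compatibly. The Kummer image for $E/K$ decomposes into the images for $E$ and for $E^K$, via $E(K_v)=E(\bQ_v)\oplus E^K(\bQ_v)$ up to $2$-torsion, which is invisible at odd $p$. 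Taking kernels gives the stated decomposition. When $E$ is ordinary at the primes above $p$ in $K$, we apply (1) over $K$, and we also apply (1) to whichever of $E$, $E^K$ is ordinary over $\bQ$.

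The main obstacle is the last step: matching the ordinary local conditions in the ramified case, where $E^K$ is additive at $p$. There one must check directly that the $(-)$-part of $\H^1(I_v,\Fil^-)$ cuts out exactly the Kummer condition for $E^K$, and we would verify this by computing the Kummer image via $E^K(\bQ_{\cyc,p})\otimes\bQ_p/\bZ_p$.
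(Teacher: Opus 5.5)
Your proposal follows the paper's proof. For (1)--(3) the paper simply cites Greenberg, \cite[Proposition 2.4]{GV00} and \cite[Proposition 3.1.3]{LY26}, and your sketches are the standard arguments behind those results. Part (4) is the same splitting into $\pm$-eigenspaces for the order-$2$ group $\Gal(K_\cyc/\bQ_\cyc)$, via inflation--restriction and Shapiro with $p\nmid 2$, applied compatibly to global cohomology, local cohomology at $p$ and the Kummer images.

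Two points in (4) should be adjusted.

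First, the step you call the main obstacle is not part of this statement. Only the summand attached to the curve that is ordinary over $\bQ$ is claimed to be a strict Greenberg Selmer group. The group $\Sel^\Sigma(E^K/\bQ_\cyc)$ stays the Kummer-type group, and you have already identified its local condition with the $-$-part of the Kummer image over $K_{\cyc,p}$. Even the finer description in \cref{2.5.3} is formal and needs no computation with $E^K(\bQ_{\cyc,p})\otimes\bQ_p/\bZ_p$: take $-$-parts of the equality of local conditions that (1) gives over $K$. Since $\Fil^\pm(E[p^\infty])$ is $G_{\bQ_p}$-stable and taking eigenspaces is exact, the $-$-part of $\ker\bigl(\H^1(K_{\cyc,p},E[p^\infty])\to\H^1(K_{\cyc,p},\Fil^-(E[p^\infty]))\bigr)$ is $\ker\bigl(\H^1(\bQ_{\cyc,p},E[p^\infty]\otimes\chi)\to\H^1(\bQ_{\cyc,p},\Fil^-(E[p^\infty])\otimes\chi)\bigr)$.

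Second, you assume without proof that one of $E$, $E^K$ is ordinary over $\bQ$, and this is exactly what the clause ``at least one'' requires. The paper gives the reason. If $E$ is ordinary over $K_p$ but additive over $\bQ_p$, then $p$ ramifies in $K$. Inertia at $p$ then acts on $T_\ell E$ ($\ell\neq p$) by $\chi_K$ times a unipotent matrix, so $E^K$ is semistable over $\bQ_p$. It is ordinary because it becomes isomorphic to $E$ over the totally ramified extension $K_p$.

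Finally, a harmless slip in (1). For split multiplicative reduction $\Fil^-=\bQ_p/\bZ_p$, so $\H^0(F_{\cyc,v},\Fil^-)$ modulo the image of $\H^0(F_{\cyc,v},E[p^\infty])$ is infinite, not finite. You do not need this claim: divisibility of $\H^1(F_{\cyc,v},\Fil^+)$ already gives divisibility of its image.
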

\begin{proof}
	\begin{enumerate}
		\item This is in~\cite[section 2]{Greenberg1989}.
		\item This follows from~\cite[Proposition 2.4]{GV00}, which still works for $K_\cyc$.
		\item This is~\cite[Proposition 3.1.3]{LY26}, which still works for $K_\cyc$.
		\item Write $\Gal(K_\cyc/\bQ_\cyc)=\{\id,\tau\}$ and $\Gal(K_{\cyc,p}/\bQ_{\cyc,p})=\{\id,\sigma\}$. Then the isomorphism $\Gal(K_\cyc/\bQ_\cyc)\iso \Gal(K_{\cyc,p}/\bQ_{\cyc,p})$ identifies $\tau$ with $\sigma$ which respects restriction maps \begin{center}{\small\begin{equation*}\begin{tikzcd}	
						\H^1(K^\Sigma/K_\cyc,E[p^\infty]) \ar[r,"="]\ar[d,"\res"]& \H^1(K^\Sigma/K_\cyc,E[p^\infty])^{\frac{1+\tau}{2}}\oplus\H^1(K^\Sigma/K_\cyc,E[p^\infty])^{\frac{1-\tau}{2}} \ar[d,"\res"]\\
						\H^1(K_{\cyc,p},E[p^\infty])\ar[r,"="]&\H^1(K_{\cyc,p},E[p^\infty])^{\frac{1+\sigma}{2}}\oplus\H^1(K_{\cyc,p},E[p^\infty])^{\frac{1-\sigma}{2}}\\	 	\end{tikzcd}
			\end{equation*}}
		\end{center}
	and Kummer images\[
	 \im\bigl((E(K_{\cyc,p})\otimes\bQ_p/\bZ_p)^{\frac{1\pm\sigma}{2}}\xrightarrow{\kappa_p} \H^1(K_{\cyc,p},E[p^\infty])^{\frac{1\pm\sigma}{2}}\bigr).\]
The splitness of $\Sel^\Sigma(E/K_\cyc)$ then follows from the definitions\[
\Sel^\Sigma(E/K_\cyc)^{\frac{1\pm\tau}{2}}\defeq\ker\bigl(\H^1(K^\Sigma/K_\cyc,E[p^\infty])^{\frac{1\pm\tau}{2}}\to\frac{\H^1(K_{\cyc,p},E[p^\infty])^{\frac{1\pm\sigma}{2}}}{\im(\kappa_p)^{\frac{1\pm\sigma}{2}}}\bigr).
\]
	The claim now follows from the observation that $\Sel(E/\bQ_\cyc)$ is the $+$-part $\Sel^\Sigma(E/K_\cyc)^{\frac{1+\tau}{2}}$ and $\Sel(E^K/\bQ_\cyc)$ is the $-$-part $\Sel^\Sigma(E/K_\cyc)^{\frac{1-\tau}{2}}$, both coming from the inflation-restriction exact sequence as $p\nmid [K:\bQ]=2$. 	
	Also note that if $E$ (or $E^K$) has ordinary reduction at all places above $p$ in $K$ but it has additive reduction over $\bQ$, then the bad reduction is killed by the Kronecker character $(\frac{D_K}{\cdot})$, so its quadratic twist will have ordinary reduction over $\bQ$. 
	\end{enumerate}	
\end{proof}

\begin{remark}
	We discuss a more general phenomenon regarding (iv) above. Since we are mostly interested in \emph{ramified} twists of ordinary elliptic curves, we will always take $K$ to be a quadratic extension where $p$ \emph{ramifies}, thus $[K:\bQ]=[K_p:\bQ_p]=[K_\cyc:\bQ_\cyc]=[K_{\cyc,p}:\bQ_{\cyc,p}]=2$, and the unique non-trivial elements in the degree $2$ Galois groups split all modules (e.g. $\H^i(-,M)$ for some $M$ of $p^\infty$-exponent) over $K$, $K_p$, $K_\cyc$ and $K_{\cyc,p}$ into $\pm$-eigencomponents, and respects all restriction maps (e.g. the global-to-local maps and finite level to infinity level maps). Kummer images also decompose into $\pm$-parts. In particular, the $+$-component $\H^1(K,M)^+$ is just $\H^1(\bQ,M)$ from the inflation-restriction sequence since $p\nmid 2$ (similar for the other quadratic extensions), and $\H^1(K,M)^-$ is $\H^1(\bQ,M(\chi_K))$ for the $K$-twisted module. This allows us to explicitly describe the Selmer groups for $E[p^\infty]\otimes\chi$ which is possibly no longer ordinary. 
\end{remark}

As an example of the remark above, we describe the Selmer group for elliptic curves of potentially ordinary reduction coming from quadratic twists of ordinary curves. Let $E$ be $p$-ordinary and $\chi$ a ramified quadratic character corresponding to a quadratic field $K$, so $E^K$ has additive potentially ordinary reduction. Then the Selmer group for $E$ over $\bQ$ and $K$ are both Greenberg's Selmer group, so globally there is a decomposition \[\H^1(K^\Sigma/K_\cyc,E[p^\infty])=\H^1(\bQ^\Sigma/\bQ_\cyc,E[p^\infty])\oplus\H^1(\bQ^\Sigma/\bQ_\cyc,E^K[p^\infty]),\] where the last term is identified with $\H^1(\bQ^\Sigma/\bQ_\cyc,E[p^\infty]\otimes\chi)$,
and locally there is a decomposition \[\H^1(K_{\cyc,p},\Fil^-(E[p^\infty]))=\H^1(\bQ_{\cyc,p},\Fil^-(E[p^\infty]))\oplus\text{(local term for $E^K$)}.\] One sees that $\H^1(\bQ_{\cyc,p},(\Fil^-(E[p^\infty]))\otimes \chi)$ shall be the correct local term defining $\Sel^\Sigma(E^K/\bQ_\cyc)$ yielding the splitting of Selmer group over $K$. This will be revisited in~\cref{2.5.3}.

Now assume $p$ is an Eisenstein prime for $E$, that is, $E[p]$ is reducible as a $G_\bQ$-module, and recall the theory for Selmer groups developed in~\cite{GV00}. First assume $E$ has ordinary reduction at $p$. Then the decomposition\[
0\to \F(\phi) \to E[p] \to \F(\psi) \to 0
\]
yields an exact sequence\begin{equation}\label{selEtochar}
\Sel^\Sigma(\F(\phi)/\bQ_\cyc) \to \Sel_\Gr^\Sigma(E[p]/\bQ_\cyc) \to \Sel^\Sigma(\F(\psi)/\bQ_\cyc)
\end{equation}
where the Selmer groups for a character $\theta$ are defined via unramified local conditions
	\[\Fil^+(\F(\theta))=\begin{cases}
		\F(\theta)& \text{if }\theta\text{ is ramified,}\\
		0& \text{if }\theta\text{ is unramified},
	\end{cases}
	\]
	and $\Fil^-(\F(\theta))=\F(\theta)/\Fil^+(\F(\theta)).$
More precisely, for $F=\bQ$ or $K$ and $\theta=\phi,\psi$, we have\[
\Sel^\Sigma(\F(\theta)/F_\cyc)\defeq\ker\bigl(\H^1(F^\Sigma/F_\cyc,\F(\theta))\to \prod_{v\mid p}\H^1(F_{\infty,v},\Fil^-(\F(\theta)))\bigr).\]

The sequence~\eqref{selEtochar} is often also exact on the left and right, such as when the assumption that the unramified character is odd in~\cite{GV00} is satisfied. In particular, under the same assumption, the $\mu$-invariant of $\phi$ and $\psi$ are $0$, so the $\mu$-invariant of $E$ is also $0$.

\subsection{The analytic side: $p$-adic $L$-functions}
In this section, we describe the cyclotomic $p$-adic $L$-function constructed by Mazur, Tate and Teitelbaum~\cite{MTT}. Let $E/\bQ$ be an elliptic curve and let $p$ be a prime of ordinary reduction for $E$. For a Dirichlet character $\chi$, denote by $L(E/\bQ,\chi,s)$ the Hasse--Weil $L$-series attached to $E$, twisted by $\chi$, and let $\Omega_E$ be the real N\'eron period for $E$. Let $\alpha_p$ be the unit root of the Hecke polynomial\[
x^2-a_p(E)x+p,
\]where $a_p(E)=p+1-\#\tilde{E}(\F)$. When $E$ has multiplicative reduction, take $\alpha_p=a_p(E)$.

Let $\Lambda=\Lambda_\bQ$. When $E$ has good ordinary reduction at $p$, Mazur and Swinnerton-Dyer~\cite{MSD74} have constructed an element $\cL_E^\MSD(T) \in\Lambda\otimes \bQ_p$, known as the (Mazur--Swinnerton-Dyer) $p$-adic $L$-function, satisfying the following interpolation property:
for any finite order character $\chi$ of $\Gamma$ of conductor $p^r$ with $r >0$, we have\[
\cL_E^\MSD(\chi)=\frac{p^r}{\tau(\ol\chi) \alpha_p^r}\cdot \frac{L(E/\bQ,\chi,1)}{\Omega_E},
\]
where $\tau(\ol\chi)=\sum_{a\pmod{p^r}}\ol\chi(a)e^{2\pi ia/p^r}$ denotes the Gauss sum, and \[
\cL_E^\MSD(\mathbf{1})=(1-\alpha_p^{-1})^2\cdot \frac{L(E/\bQ,1)}{\Omega_E}.
\]

\begin{lemma}[\cite{GV00},~\cite{Wut17}]
	$\cL_E^\MSD$ is integral, i.e., $\cL_E^\MSD\in\Lambda$.
\end{lemma}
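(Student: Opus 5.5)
Integrality of $\cL_E^\MSD$ means that the $p$-adic measure built from modular symbols takes values in $\bZ_p$. So the plan is to reduce the statement to the $p$-integrality of the normalized modular symbols of $f$, and to check that the Néron period $\Omega_E$ is an admissible normalization at the Eisenstein prime $p$.

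First I recall the construction. Set
\[
[a/m]^+_E\defeq \frac{\pi i}{\Omega_E}\Bigl(\int_{i\infty}^{a/m}f(z)\,dz+\int_{i\infty}^{-a/m}f(z)\,dz\Bigr).
\]
By Manin--Drinfeld these lie in $\bQ$ with bounded denominators. For $p$ good ordinary, $\cL_E^\MSD$ is the limit of the Riemann sums
\[
\sum_{a\bmod p^n}\alpha_p^{-n}\bigl([a/p^n]^+_E-\alpha_p^{-1}[a/p^{n-1}]^+_E\bigr)\,\sigma_a\in\bZ_p[\Gamma_n]\otimes\bQ_p .
\]
Since $\alpha_p\in\bZ_p^\times$, the whole measure is $p$-integral as soon as every value $[r]^+_E$, $r\in\bQ$, lies in $\bZ_{(p)}$. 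So it suffices to show that the $p$-adic valuation of the lattice $\{[r]^+_E\}\subset\bQ$ is non-negative.

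Next I compare $\Omega_E$ with the canonical period $\Omega_f^+$. The canonical period is defined so that the $\Omega_f^+$-normalized plus modular symbols generate $\bZ_{(p)}$, i.e.\ they are integral and some value is a unit. Then $\Omega_E=c\cdot\Omega_f^+$ with $c\in\bZ_{(p)}$ up to a unit, so the task becomes showing $\ord_p(c)\le 0$. Here I follow \cite[\S3]{GV00}. Take $E$ to be any curve in the isogeny class and write $E_0$ for the optimal curve. By Stevens, the Manin constant of $E_0$ is prime to $p$ for odd $p$ (this is known in the semistable/ordinary case via Mazur and Abbes--Ullmo--Česnavičius). For an odd prime, the Néron periods of curves related by a $p$-isogeny differ by a factor of $p^{\pm1}$. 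I expect the direction of this change to be governed by the parity and ramification of the characters, that is, by which of $\phi,\psi$ is the kernel.

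The main obstacle is this last step: pinning down $\Omega_E/\Omega_f^+$ at an Eisenstein prime for every curve in the isogeny class. In the paper's setting the potentially bad case is the non-admissible one, where an isogenous curve could a priori have period $p\Omega_f^+$. The cited statement is unconditional on the curve, so I would simply invoke Wuthrich's argument \cite{Wut17}. Wuthrich shows directly that the integral of $f$ against $\frac{1}{\Omega_E}$ over $H_1(E,\bZ)^+$ lands in $\bZ_p$, provided $E(\bQ)[p]$ or the kernel structure behaves. He proves integrality of $\cL^\MSD_E$ for all $E$ with $p$ odd, using that the Néron lattice of $E$ contains the image of $H_1(X_0(N),\bZ)^+$ up to a $p$-adic unit. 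I would reproduce that lattice inclusion, using the modular parametrization $X_0(N)\to E_0\to E$ and the fact that a cyclic isogeny of degree prime to $2$ does not introduce $p$ in the denominator of the pullback of the Néron differential. Finally, in the multiplicative case the same argument applies with $\alpha_p=a_p=\pm1$.
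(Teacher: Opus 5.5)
Your proposal has a genuine gap at the reduction step. You reduce integrality of $\cL_E^{\MSD}$ to $p$-integrality of every symbol $[r]^+_E$, and then aim to prove the latter via $\ord_p(c)\le 0$ for $\Omega_E=c\,\Omega_f^+$. At Eisenstein primes that intermediate statement is false, already for optimal curves to which the lemma applies.

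Take $E=X_0(11)$ (Cremona 11a1) and $p=5$, which is good ordinary with $a_5=1$. Then $[0]^+_E=L(E,1)/\Omega_E=1/5$. Indeed, the Manin constant is $1$ and every cusp $a/5^n$ is $\Gamma_0(11)$-equivalent to $0$. So $\{i\infty,a/5^n\}$ maps to a path from $O$ to the rational $5$-torsion point $\pi(0)$, and hence $[a/5^n]^+_E\equiv 1/5\pmod{\bZ_{(5)}}$ for all $a$ and $n$. Thus $\ord_5(c)=1$, and no comparison of periods can give what you want. Nevertheless $\cL_E^{\MSD}\in\Lambda$: in your Riemann sums, $\alpha_5^{-n}\bigl([a/5^n]^+_E-\alpha_5^{-1}[a/5^{n-1}]^+_E\bigr)$ is congruent modulo $\bZ_5$ to $\tfrac15\alpha_5^{-n-1}(\alpha_5-1)$. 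This lies in $\bZ_5$ because $\alpha_5\equiv a_5=1\pmod 5$.

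For the isogenous curve 11a3 it is worse. The $5$-isogeny $X_0(11)\to 11a3$ pulls the N\'eron differential back to $\pm 5\cdot 2\pi i f(z)\,dz$, so the symbols acquire denominator $25$ (e.g.\ $[0]^+=L(E,1)/\Omega_E=1/25$). This contradicts your final claim that a cyclic isogeny of odd degree introduces no factor of $p$.

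The underlying misdiagnosis is in the step you attribute to Wuthrich. An inclusion of the image of $H_1(X_0(N),\bZ)^+$ into the N\'eron lattice only controls integrals over closed cycles. Modular symbols, by contrast, are integrals between cusps. By Manin--Drinfeld their denominators come from the orders of the cuspidal points $\pi(r)\in E$, together with the isogeny factor just described, and at an Eisenstein prime these can be divisible by $p$. For $X_0(11)$ your lattice inclusion is an equality, and the symbols are still not $5$-integral.

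The missing idea is that integrality must be proved for the measure rather than for the individual symbols. One has to show that the $p$-power denominators have this cuspidal (Eisenstein) shape and cancel in $\alpha_p^{-n}[a/p^n]^+_E-\alpha_p^{-n-1}[a/p^{n-1}]^+_E$. One also has to keep track of how the normalization changes along $p$-isogenies in the class. This is what the cited works of Greenberg--Vatsal and Wuthrich supply. The paper itself does not reprove it but simply refers to \cite[Theorem 2.1.1]{CGS}. If you only intend to cite those results, you agree with the paper, but the argument you sketch for them would not go through.
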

\begin{proof}
	See for example~\cite[Theorem 2.1.1]{CGS}.
\end{proof}	
When $E$ has multiplicative reduction, the $p$-adic $L$-function was later refined in~\cite{MTT}, which is now also known to be integral, and we still denote it by $\cL^\MSD$. Note that when $\alpha_p=1$, i.e. when $E$ has split multiplicative reduction at $p$, the multiplier $(1-\alpha_p)^2$ evaluates to $0$. This corresponds to the trivial zero phenomenon of the $p$-adic $L$-function. One knows that it is a simple $0$, that is, \[(\cL^\MSD)'(E/\bQ)=\sL_p(E)\cdot\frac{L(E/\bQ,1)}{\Omega_E}\] for some $\sL_p(E)\ne 0$.
If $E$ has additive reduction at $p$, there seems no systematic way to define its $p$-adic $L$-function, but as we shall see in the next section, a twisted $p$-adic $L$-function usually does the work if the reduction is potentially ordinary.

\subsection{The cyclotomic Iwasawa Main Conjectures for elliptic curves and their twists}\label{KatoIMC}

Let $E$ be an elliptic curve with ordinary reduction at an odd prime $p$. The cyclotomic Iwasawa Main Conjecture for $E$ over $\bQ$ predicts that there is an equality\[
\Char_{\Lambda}(\Sel_{\Gr_0}(E/\bQ_\cyc)^\vee)=(\cL_E^\MSD)\text{ in }\Lambda.
\]

If one considers the $\bZ_p$-extension of $\bQ(\mu_p)$ instead, one arrives at the Iwasawa theory for the $\bZ_p^\times$-extension $\bQ(\mu_{p^\infty})$ of $\bQ$ in Kato's setting~\cite{Kato}. Then $\Gal(\bQ(\mu_{p^\infty})/\bQ)\iso \bZ_p[\Delta]$ for $\Delta=\Gal(\bQ(\mu_p)/\bQ)$ of order $p-1$ and the corresponding Iwasawa algebra is of the form $\Lambda_\Delta\defeq\bZ_p[\Delta]\llbracket T\rrbracket$. Each torsion $\Lambda_\Delta$-module $M$ naturally decomposes into isotypic components corresponding to the action of powers of the mod $p$ cyclotomic character $\omega$:\[
X=\sum_{i=0}^{p-2} X^{(i)},
\]
where $X^{(i)}$ denotes the isotypic part corresponding to $\omega^i$.

In particular, replacing the $\bQ_\infty$ by $\bQ(\mu_{p^\infty})$ in the definitions of Selmer groups, one obtains analogs of (strict) Greenberg's Selmer groups as co-finitely generated co-torsion $\Lambda_\Delta$-modules which we denote by $\Sel_{\Delta,{\Gr_0}}(E/\bQ_\cyc)$, whose $0$-th component corresponds to the Selmer group defined earlier:\[
\Sel_{\Delta,\Gr_0}(E/\bQ_\cyc)^{(0)}\iso\Sel_{\Gr_0}(E/\bQ_\cyc).\]
Moreover, the $i$-th component Selmer group agrees with strict Greenberg's Selmer group $\Sel_{\Gr_0}(E\otimes\omega^i/\bQ_\cyc)$ for the twists $E\otimes \omega^i$, i.e., one replaces $E[p^\infty]$ by $E[p^\infty]\otimes\omega^i$ in the definition of Selmer groups.

On the analytic side, Kato constructed a $p$-adic $L$-function $\cL(E,\Delta)\in\Lambda[\Delta]\llbracket T\rrbracket$ having `branches' for different powers of $\omega^{(i)}$, for $i=0,...,p-2$. In particular, its projection $\cL(E,\mathbf{1})$ to $\bZ_p\llbracket T\rrbracket$ (the trivial or $0$-th branch) agrees with $\cL_E^\MSD$ in $\Lambda$.

\begin{theorem}
	Assume that $E$ has ordinary reduction at $p\geq 3$ and that $E[p]$ is reducible. Then for each $i$, $\Char_{\Lambda}(\Sel_{\Gr_0}(E\otimes\omega^i/\bQ_\cyc)^\vee)\supset (\cL(E,\omega^i))$.
\end{theorem}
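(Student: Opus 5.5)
This is Kato's divisibility, and the plan is to deduce it from \cite[Theorem 17.4]{Kato}, which is stated over the full extension $\bQ(\mu_{p^\infty})$. I would then project to each isotypic component. Kato's theorem says the following. Suppose the residual image contains $\SL_2(\bZ_p)$, or more generally some auxiliary hypothesis is satisfied. Then the $\Lambda_\Delta$-dual of the fine (or strict) Selmer group over $\bQ(\mu_{p^\infty})$ is torsion. Moreover, after inverting $p$, its characteristic ideal contains $\cL(E,\Delta)$ branch by branch. Since $E[p]$ is reducible here, the big image hypothesis fails. Two steps therefore need care: the divisibility in the Eisenstein case, and integrality, i.e.\ removing the ``after inverting $p$''.

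The key steps, in order, are as follows.

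First, I would build Kato's zeta element $z_\gamma\in\H^1_{\Iw}(\bQ(\mu_{p^\infty}),T_pE)$. Via the Coleman/Perrin-Riou map for the ordinary filtration, it maps to $\cL(E,\Delta)$ up to an explicit period. The $\omega^i$-branch of this map is the twisted $\cL(E,\omega^i)$, using the interpolation recalled above.

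Second, I would apply the Euler system machinery of Kato/Rubin/Perrin-Riou in its form that does not need big image, working with each $\omega^i$-component separately. This gives
\[
\Char_\Lambda\bigl(\H^2_{\Iw}(T_pE\otimes\omega^{-i})\bigr)\supset \Char_\Lambda\bigl(\H^1_{\Iw}(T_pE\otimes\omega^{-i})/\Lambda z_\gamma^{(i)}\bigr),
\]
possibly up to a power of $p$ when the image is small.

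Third, I would use Poitou--Tate duality. This exact sequence relates $\H^2_{\Iw}$ and $\H^1_{\Iw}/z$ to $\Sel_{\Gr_0}(E\otimes\omega^i/\bQ_\cyc)^\vee$ and $\Lambda/\cL(E,\omega^i)$. The result is the divisibility in $\Lambda[1/p]$.

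The main obstacle is the $\mu$-part, namely an integral divisibility when $E[p]$ is reducible. Kato only asserts the divisibility in $\Lambda\otimes\bQ_p$ unless the image is large. I would try two routes.

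The first route, which I would attempt first, is to cite the refinement in \cite{Wut17}/\cite{CGS}. There one chooses $E'$ in the isogeny class with $\Sel$ and $\cL$ behaving compatibly under isogeny. One notes that the isogeny changes the $\mu$-invariants of both sides by the same amount, since the Manin constant and period ratio match the change in the Selmer group. One then runs the Euler system argument with Rubin's bounds, which lose only a factor controlled by $\H^0$ of the residual representation. Those error terms cancel because $\mu(\cL(E,\omega^i))\geq0$ and the $\mu$ of the Selmer side is bounded by the Euler system.

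The second route is the fallback for twists where this is unavailable. I would accept the rational divisibility, and only afterwards upgrade it to the integral one. The upgrade would use the $\mu=0$ results proved later in the paper, applied to the Selmer side, together with $\mu(\cL)\geq 0$. With $\mu(\Sel^\vee)=0$, a divisibility in $\Lambda[1/p]$ implies one in $\Lambda$.
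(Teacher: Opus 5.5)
Read as a bare citation, your Route 1 is exactly the paper's proof. The paper's argument consists only of invoking \cite{Kato} for the rational divisibility and \cite{Wut17} for the integral version at Eisenstein primes, including multiplicative reduction, together with the remark that the strict Selmer group $\Sel_{\Gr_0}$ needs no trivial-zero correction \cite[section 5.1]{EPW06}. Your sketch never treats that multiplicative case, although the statement includes it: for split multiplicative $E$, $\Fil^-$ is the trivial character, and your Coleman-map and Poitou--Tate step must absorb the trivial zero. More importantly, the mechanism you attribute to the integral refinement is not an argument. Without residual irreducibility, the Euler system bound gives the divisibility only up to a power of $p$, i.e.\ $\mu(\Sel_{\Gr_0}(E\otimes\omega^i/\bQ_\cyc)^\vee)\le\mu(\cL(E,\omega^i))+c$ for some uncontrolled $c\ge 0$. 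The trivial inequality $\mu(\cL(E,\omega^i))\ge 0$ gives no upper bound on the Selmer side, so it cannot cancel $c$.

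The genuine gap is in Route 2. Since $\cL(E,\omega^i)\in\Lambda$, the rational divisibility is exactly divisibility of the distinguished-polynomial parts. The integral statement adds precisely the inequality $\mu(\Sel_{\Gr_0}(E\otimes\omega^i/\bQ_\cyc)^\vee)\le\mu(\cL(E,\omega^i))$. Your upgrade covers only the case where the left side is $0$, where this inequality is vacuous. But the theorem is asserted for every $E$ and every $i$, and the paper itself shows in Section~\ref{nonadGr} that positive $\mu$ occurs. For a non-admissible ramified twist in which the odd ramified character $\phi\chi$ is a submodule of $E[p]\otimes\chi$, all of $\Sel^\Sigma_\rel(\F(\phi\chi))$ maps with finite kernel into the imprimitive Selmer group and forces $\mu>0$. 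The paper's $\mu=0$ results produce only \emph{some} $E'$ in the isogeny class, and only for $i=0$ (via \cite{GV00}, \cite{LY26}) and $i=\frac{p-1}{2}$; nothing is proved for the other branches. To pass from such an $E'$ to $E$, you would need an input you only gesture at: that the discrepancy $\mu(\cL)-\mu(\Sel^\vee)$ is isogeny invariant, i.e.\ the change of $\mu(\Sel^\vee)$ under a $p$-isogeny matches the change of periods on the analytic side. Even then the remaining $i$ are untouched. Route 2 does suffice for the paper's actual application: in Theorem~\ref{IMC} both $\mu$-invariants vanish for the twist of $E$ itself, so, as in \cite{GV00}, Kato's rational divisibility is enough there.
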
\begin{proof}
This was originally due to Kato~\cite{Kato} at good ordinary primes in the non-Eisenstein case for a rational divisibility, and is extended to the integral Eisenstein case, including multiplicative reduction, by Wuthrich~\cite{Wut17}. Note that we have adopted the strict definition for Selmer groups, there there is no need to account for the trivial zero behavior (see~\cite[section 5.1]{EPW06}).
\end{proof}

The rest of the paper is devoted to proving the reversed divisibility in certain cases as well as discussing some consequences. More precisely, we will focus on the cases where $E$ is an elliptic curve having potentially multiplicative reduction at $p\geq 3$, which by~\cref{untwist} corresponds to an admissible quadratic twist (hence certain powers of $\omega$ up to some unramified twist) of an ordinary elliptic curve. We will use the methods in~\cite{GV00} to compare the Iwasawa invariants of both sides of the equation to turn the divisibility into an equality, and show that the twisted Iwasawa Main Conjecture yields the rank $0$ BSD formula for the $E$. One can further obtain the rank $1$ BSD formula for certain twist of $E$, using the anticyclotomic inputs from~\cite{KY24b}.

\subsection{The Iwasawa Main Conjectures for class groups}	
In this section, we discuss the inputs needed in~\cite{GV00} to compare the Iwasawa invariants of the Selmer groups and the corresponding $p$-adic $L$-functions. The main task is to relate the Selmer group of an elliptic curve $E$ at an Eisenstein prime to certain Selmer groups for the characters $\phi$ and $\psi$ as in~\cref{defadmissible} (which we also view as $\bZ_p^\times$-valued character via Teichm\"uller lifts, by an abuse of notation), but in the slightly more general admissible setting. 

Let $\theta$ be $\phi$ or $\psi$. For each finite set of primes $\Sigma$ containing $p$, we can define two types of Selmer groups for $\theta$, the \emph{unramified} (or \emph{strict}) Selmer group\[
\Sel_\str(\bQ_p/\bZ_p(\phi))\defeq\ker\bigl(\H^1(\bQ^\Sigma/\bQ_\cyc,\bQ_p/\bZ_p(\phi))\to \prod_{v}\frac{\H^1(\bQ_{\infty,v},\bQ_p/\bZ_p(\phi))}{\H^1_\unr(\bQ_{\infty,v},\bQ_p/\bZ_p(\phi))}\bigr),
\]
where $\H^1_\unr$ is the unramified local condition, which can be identified with\[
\ker\bigl(\H^1(\bQ^\Sigma/\bQ_\cyc,\bQ_p/\bZ_p(\phi))\to \prod_{v}\H^1(\bQ_{\infty,v},\bQ_p/\bZ_p(\phi))\bigr),
\]
 and the \emph{unramified-outside-$p$} (or \emph{relaxed}) Selmer group
\[
\Sel_\rel(\bQ_p/\bZ_p(\phi))\defeq\ker\bigl(\H^1(\bQ^\Sigma/\bQ_\cyc,\bQ_p/\bZ_p(\phi))\to \prod_{v\ne p}\H^1(\bQ_{\infty,v},\bQ_p/\bZ_p(\phi))\bigr).
\]

Their imprimitive residual versions will also be crucial to our argument:
\[
\Sel^\Sigma_\str(\F(\phi))\defeq\ker\bigl(\H^1(\bQ^\Sigma/\bQ_\cyc,\bQ_p/\bZ_p(\phi))\to \H^1(\bQ_{\infty,v},\F(\phi))\bigr),
\]
\[
\Sel^\Sigma_\rel(\F(\phi))\defeq\H^1(\bQ^\Sigma/\bQ_\cyc,\bQ_p/\bZ_p(\phi)),
\]

One knows the following form of classical Iwasawa Main Conjecture as in~\cite{GV00}.
\begin{theorem}\label{IMCoddeven}
	There exists a $p$-adic $L$-function for $\theta\ne\mathbf{1},\omega$ depending on its parity such that
	\[\begin{cases}
		\Sel_\str(\bQ_p/\bZ_p(\theta))^\vee=(\cL_\theta)& \text{ if }\theta\text{ is odd};\\
		\Sel_\rel(\bQ_p/\bZ_p(\theta))^\vee=(\cL_\theta)&\text{ if }\theta\text{ is even}.
	\end{cases}\]
\end{theorem}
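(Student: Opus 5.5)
The plan is to deduce the statement from the Mazur--Wiles theorem (the Iwasawa Main Conjecture for totally real fields over $\bQ$, with the class-number formulation for abelian fields). The first step is to translate both Selmer groups into Iwasawa modules of Galois groups via class field theory. Here the equalities in the statement are read as equalities of characteristic ideals.

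Let $\theta$ be a character of $G_\bQ$ with values in $\bZ_p^\times$ (Teichm\"uller lift), of order prime to $p$. Let $L=\bQ^{\ker\theta}$ be its splitting field, $L_\cyc=L\bQ_\cyc$, and $\Delta_L=\Gal(L_\cyc/\bQ_\cyc)$. Since $p\nmid\#\Delta_L$, inflation--restriction gives
\[
\H^1(\bQ^\Sigma/\bQ_\cyc,\bQ_p/\bZ_p(\theta))=\Hom_{\Delta_L}\bigl(\Gal(M_\Sigma/L_\cyc),\bQ_p/\bZ_p(\theta)\bigr),
\]
where $M_\Sigma$ is the maximal abelian pro-$p$ extension of $L_\cyc$ unramified outside $\Sigma$. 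Imposing the local conditions then identifies the Selmer groups with isotypic pieces of Galois groups.

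For $\theta$ odd, the strict condition (unramified everywhere, split at $p$ up to finite error) identifies $\Sel_\str(\bQ_p/\bZ_p(\theta))^\vee$ with the $\theta^{-1}$-part of $X=\Gal(H_\infty/L_\cyc)$, up to a Tate twist convention. Here $H_\infty$ is the maximal unramified abelian pro-$p$ extension (or its $p$-split version). For $\theta$ even, the relaxed condition (unramified outside $p$) identifies $\Sel_\rel(\bQ_p/\bZ_p(\theta))^\vee$ with the $\theta^{-1}$-part of $\frak{X}=\Gal(M_p/L_\cyc)$, where $M_p$ is the maximal abelian pro-$p$ extension unramified outside $p$. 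The primes $v\neq p$ in $\Sigma$ contribute nothing, since $\Gamma$ acts on inertia through a quotient killed on these isotypic components. I will check this with the local Euler-factor computation that is also used for \cref{Selinfo}(ii).

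The second step treats the even case. By Iwasawa's theorem, $\frak{X}^{(\theta^{-1})}$ is a torsion $\Lambda$-module with no nonzero finite submodule. The Mazur--Wiles theorem (for $\theta\neq\mathbf{1}$ even) states that its characteristic ideal is generated by the Kubota--Leopoldt $p$-adic $L$-function attached to $\theta$, suitably twisted by $\kappa$, the cyclotomic character of $\Gamma$. I define $\cL_\theta$ to be this element. The hypothesis $\theta\neq\mathbf{1}$ is exactly what guarantees that $\cL_\theta$ lies in $\Lambda$ rather than having a pole.

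The third step treats the odd case, which needs Kummer duality. The odd part $X^{(\theta^{-1})}$ of the unramified module is related by Iwasawa's reflection to the even part $\frak{X}^{(\omega\theta)}$, with $\omega\theta^{-1}$ or $\omega\theta$ even depending on conventions. The link is the adjoint pairing $X^{-}\times \frak{X}^{+}\to\bZ_p(1)$ over $L(\mu_p)_\cyc$, which identifies their characteristic ideals up to the involution $\iota:\gamma\mapsto\kappa(\gamma)\gamma^{-1}$. The Mazur--Wiles theorem applied to the even character $\omega\theta^{-1}$ then gives $\Char\bigl(\Sel_\str(\bQ_p/\bZ_p(\theta))^\vee\bigr)=(\cL_\theta)$ with $\cL_\theta:=\iota(\cL_{KL}(\omega\theta^{-1}))$. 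The hypothesis $\theta\neq\omega$ ensures that $\omega\theta^{-1}\neq\mathbf{1}$, so no pole or trivial-zero correction occurs. Alternatively, one could use the odd-character form of Mazur--Wiles directly, namely $\Char(X^{(\chi)})=(g_{\omega\chi^{-1}})$ for odd $\chi\neq\omega$.

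I expect the main obstacle to be bookkeeping rather than substance. One must check that the strict Selmer group (including the splitting condition at $p$) matches the $\theta$-part of the unramified Iwasawa module, not the $p$-split one, up to pseudo-null or finite error. This error is harmless for characteristic ideals, but I will verify it by analysing the decomposition group at $p$ in $L_\cyc$ on the $\theta$-component: it is trivial on that component unless $\theta|_{G_{\bQ_p}}=\mathbf{1}$ or $\omega$. One must also track the twists and involutions carefully, so that the element $\cL_\theta$ is the same one entering the congruence $\cL(E,\chi)\equiv\cL_{\phi\chi}\cL_{\psi\chi}$ of \cite[section 3]{GV00}. For that normalization I would follow \cite[section 2]{GV00} verbatim.
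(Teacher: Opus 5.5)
Your proposal takes the same route as the paper, whose proof is simply the citation of Mazur--Wiles. Identifying $\Sel_\str$ and $\Sel_\rel$ with isotypic parts of the unramified and unramified-outside-$p$ Iwasawa modules, and then applying Mazur--Wiles directly or via Iwasawa's reflection, is the standard way to unpack that citation.

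One correction to your bookkeeping. The hypothesis $\theta\neq\mathbf{1},\omega$ does not exclude $\theta|_{G_{\bQ_p}}=\mathbf{1}$. In that case the decomposition groups at $p$ contribute to the relevant isotypic part of $X$ a quotient of $\Lambda/(T)$, which is in general infinite. So the split-at-$p$ and unramified-at-$p$ versions of $\Sel_\str$ can have characteristic ideals differing by a factor $T$, not merely by a finite error. Working with the unramified condition and the odd form of Mazur--Wiles for the unramified module avoids this. The case also never arises in the paper's applications, where the strict condition is imposed only on an odd character that is ramified at $p$.
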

\begin{proof}
	This is proved by Mazur and Wiles in~\cite{MazurWiles}
\end{proof}
In particular, one knows $\mu(\Sel_\str(\bQ_p/\bZ_p(\theta))^\vee)=0$ for odd $\theta$, and for even $\theta$ $\mu(\Sel_\rel(\bQ_p/\bZ_p(\theta))^\vee)=0$. It is immediate that $\mu(\Sel_\str(\bQ_p/\bZ_p(\theta))^\vee)=0$ for even $\theta$ as well, but $\mu(\Sel_\rel(\bQ_p/\bZ_p(\theta))^\vee)$ is known to be positive for odd $\theta$ (see~\cite[Proof of Lemma 5.9]{Greenberg1999}).

On the other hand, if $\phi$ and $\psi$ appear in the semisimplification of $E[p]$ for some ordinary elliptic curve $E$, there is a natural choice of the Selmer groups for $\phi$ and $\psi$ depending on their ramification as in~\eqref{selEtochar}. Under~\eqref{GV}, that is when the unramified character is odd and the ramified character is even, this choice agrees with the Selmer group in~\cref{IMCoddeven}. When~\eqref{GV} fails, however, the compatibility fails and Greenberg--Vatsal's method breaks down.

We conclude this section with a key lemma in describing the Selmer groups for \emph{ramified} quadratic twists of ordinary elliptic curves at Eisenstein primes.
\begin{lemma}\label[lemma]{charforE^K}
	Assume $E$ has ordinary reduction at $p$ and that $E[p]$ fits into the exact sequence of $G_\bQ$-modules\[
	0\to \F(\phi) \to E[p]\to \F(\psi) \to 0.\]
	Let $\chi$ be a ramified quadratic character. Then $\Sel(E\otimes \chi/\bQ_\cyc)$ fits into an exact sequence\[
	\Sel_?^\Sigma(\F(\phi\chi)) \to \Sel^\Sigma(E[p]\otimes \chi/\bQ_\cyc)\to \Sel_?^\Sigma(\F(\psi\chi))
	\]
	where $\Sel_?^\Sigma(\F(\theta\chi))=\Sel_\str^\Sigma(\F(\theta\chi))$ if $\theta$ is unramified, and $\Sel_?^\Sigma(\F(\theta\chi))=\Sel_\rel^\Sigma(\F(\theta\chi))$ if $\theta$ is ramified.
\end{lemma}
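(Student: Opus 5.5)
We twist the residual sequence by $\chi$ and take cohomology over $\bQ^\Sigma/\bQ_\cyc$. Tensoring the given sequence with $\F(\chi)$ yields the exact sequence of $G_\bQ$-modules
\[
0\to\F(\phi\chi)\to E[p]\otimes\chi\to\F(\psi\chi)\to 0.
\]
Taking $G_{\bQ_\cyc}$-cohomology unramified outside $\Sigma$ (after enlarging $\Sigma$ to contain the primes dividing $\cond(\chi)$) gives the exact segment
\[
\H^1(\bQ^\Sigma/\bQ_\cyc,\F(\phi\chi))\to\H^1(\bQ^\Sigma/\bQ_\cyc,E[p]\otimes\chi)\to\H^1(\bQ^\Sigma/\bQ_\cyc,\F(\psi\chi)).
\]
It then remains to check that the local condition at $p$ defining $\Sel^\Sigma(E[p]\otimes\chi/\bQ_\cyc)$ is carried to the relaxed condition on the sub and to the strict condition on the quotient, according to ramification of $\theta$. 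Exactness in the middle at the Selmer level is a diagram chase, provided the relaxed/strict conditions are chosen so that the preimage of the Selmer group for $\psi\chi$ lands compatibly.

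The first step is to pin down the local condition at $p$ for the twist, using the $\pm$-decomposition of the preceding remark with $K=K_\chi$. Over $K_{\cyc,p}$ the curve is ordinary, and the $-$-part of $\H^1(K_{\cyc,p},\Fil^-(E[p]))$ is $\H^1(\bQ_{\cyc,p},\Fil^-(E[p])\otimes\chi)$. So the Selmer condition for $E[p]\otimes\chi$ is the kernel of the map to $\H^1(\bQ_{\cyc,p},\Fil^-(E[p])\otimes\chi)$, with $\Fil^\pm$ being the \emph{untwisted} ordinary filtration tensored with $\chi$.

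Next we compare filtrations. As recalled in the preliminaries, the ramified member of $\{\phi,\psi\}$ restricts over $G_{\bQ_p}$ to $\Fil^+(E[p])$, and the unramified one maps isomorphically onto $\Fil^-(E[p])$. After twisting by $\chi$, the same holds with $\phi\chi,\psi\chi$ and $\Fil^\pm\otimes\chi$, even though $\chi$ changes the ramification. This is exactly why the relevant condition is determined by $\theta$ and not by $\theta\chi$.

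We now treat the two cases.

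\emph{Case $\phi$ ramified and $\psi$ unramified.} Then $\F(\phi\chi)|_{G_{\bQ_p}}=\Fil^+\otimes\chi$, so any class from $\F(\phi\chi)$ maps to zero in $\H^1(\bQ_{\cyc,p},\Fil^-\otimes\chi)$. Hence the image of $\Sel^\Sigma_\rel(\F(\phi\chi))$ lies in the Selmer group. On the other side, $\F(\psi\chi)\iso\Fil^-\otimes\chi$ locally, so the projection of a Selmer class to $\F(\psi\chi)$ is locally trivial at $p$, i.e. lies in $\Sel^\Sigma_\str$.

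\emph{Case $\phi$ unramified and $\psi$ ramified.} Here the filtration can split locally, and $E[p]|_{G_{\bQ_p}}=\F(\phi)\oplus\F(\psi)$ with $\F(\phi)$ mapping isomorphically to $\Fil^-$. A class from $\F(\phi\chi)$ then lies in the Selmer group exactly when its image in $\H^1(\bQ_{\cyc,p},\F(\phi\chi))$ vanishes, which is the strict condition. The quotient $\F(\psi\chi)$ receives no condition, which is the relaxed one. Exactness at the middle term then follows: if a Selmer class maps to zero in $\H^1(\F(\psi\chi))$, it comes from a class of $\F(\phi\chi)$ whose local image satisfies the transported condition.

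The main obstacle is the second case, specifically justifying that the local map $\F(\phi\chi)\to\Fil^-\otimes\chi$ is an isomorphism over $G_{\bQ_{\cyc,p}}$. It is this isomorphism that lets the strict condition be read off, and that makes the pullback of the Selmer condition to the subgroup exactly $\Sel^\Sigma_\str$. I would argue it via the distinct inertia actions: $\phi$ is unramified while $\Fil^+$ is ramified by $\psi$ with $\psi|_{I_p}=\omega|_{I_p}\ne 1$ since $p\ge3$. Hence $\F(\phi)\cap\Fil^+=0$ over $G_{\bQ_p}$, and tensoring with $\chi$ preserves this. A secondary point is to use inertia versus decomposition conditions consistently in the relevant cohomology groups, as in the remark on $\Gr$ versus $\Gr_0$.
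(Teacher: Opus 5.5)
Your proof is correct, but it is organized differently from the paper's.

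\textbf{The paper's route.} The paper never works with the twisted module directly. It writes down the untwisted sequence $\Sel^\Sigma_{?/K}(\F(\phi))\to\Sel^\Sigma(E[p]/K_\cyc)\to\Sel^\Sigma_{?/K}(\F(\psi))$ over $K=K_\chi$, where $E$ is still ordinary, so the sequence is treated as ``just the definition''. It then takes $-$-eigenspaces under $\Gal(K_\cyc/\bQ_\cyc)$ and identifies $\Sel^\Sigma_{?/K}(\F(\theta))^-$ with $\Sel^\Sigma_?(\F(\theta\chi))$ by inflation--restriction. That $?$ is governed by $\theta$ rather than $\theta\chi$ is then automatic, because the eigenspace decomposition does not change the type of local condition.

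\textbf{Your route.} You use the $\pm$-decomposition only once, to pin down the local condition $\Fil^-(E[p])\otimes\chi$ for the twist; this is the content of the remark in Section 2 and of \cref{2.5.3}. Everything else happens over $\bQ$:
\begin{itemize}
\item the twisted long exact sequence in $\H^1(\bQ^\Sigma/\bQ_\cyc,-)$;
\item the local observation that over $G_{\bQ_p}$ the line $\F(\phi)$ coincides with $\Fil^+(E[p])$ when $\phi$ is ramified, and is complementary to it when $\phi$ is unramified, because the two lines carry characters that differ on inertia.
\end{itemize}

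\textbf{What each buys.} Your version is explicit. It checks that both maps are well defined and that the sequence is exact in the middle. It also proves the sharper facts used later in the snake-lemma diagrams of Section 4, which the paper leaves implicit:
\begin{itemize}
\item in the first case the Selmer group is exactly the preimage of $\Sel^\Sigma_\str(\F(\psi\chi))$;
\item in the second case it pulls back to exactly $\Sel^\Sigma_\str(\F(\phi\chi))$.
\end{itemize}
You also do the line comparison over $\bQ_p$, where $\Fil^+(E[p])$ is genuinely ramified for $p\geq 3$. Over $K_p$ with $p=3$, both residual characters become unramified, since $\chi|_{I_3}=\omega|_{I_3}$. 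So the paper's phrase ``ordinary filtration on $E[p]$ over $G_K$'' must be read as the restriction of the $\bQ_p$-filtration, not as something detected by ramification over $K_p$. The paper's route, in turn, is shorter and more conceptual: it reduces the statement to the standard ordinary situation of Greenberg--Vatsal.
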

\begin{proof}
	This follows from the definitions and~\cref{Selinfo}(iv). Indeed, without twisting by $\chi$, it is just the definition of strict Greenberg's Selmer groups. The same holds over the quadratic field $K=K_\chi$ where $p$ ramifies, i.e.,\begin{equation}\label{Selseq/K}	\Sel_{?/K}^\Sigma(\F(\phi)) \to \Sel^\Sigma(E[p]/K_\cyc)\to \Sel_{?/K}^\Sigma(\F(\psi)),\end{equation}
	where \[\Sel_{\str/K}^\Sigma(\F(\theta))\defeq\ker\bigl(\H^1(K^\Sigma/K_\cyc,\F(\theta))\to\H^1(K_{\cyc,p},\F(\theta))\bigr),
	\]and
	\[\Sel_{\rel/K}^\Sigma(\F(\theta))\defeq\H^1(K^\Sigma/K_\cyc,\F(\theta)),
	\]
	which just follows from ordinary filtration on $E[p]$ over $G_K$ where we still denote by $p$ the unique place in $K$ above $p$.
	Now conjugation by the unique non-trivial elements in $\Gal(K_\cyc/\bQ_\cyc)$ and $\Gal(K_{\cyc,p}/\Q_{\cyc,p}) $splits~\eqref{Selseq/K} into $\pm$-eigenspaces\[
	\Sel_{?/K}^\Sigma(\F(\phi))^\pm \to \Sel^\Sigma(E[p]/K_\cyc)^\pm\to \Sel_{?/K}^\Sigma(\F(\psi))^\pm
	\]
	where $\Sel_{?/K}^\Sigma(\F(\theta))^+=\Sel_{?}^\Sigma(\F(\theta))$ and $\Sel_{?/K}^\Sigma(\F(\theta))^-=\Sel_{?}^\Sigma(\F(\theta\chi))$ from the inflation-restriction sequence. Then the residual version of~\cref{Selinfo}(iv) implies that the $-$-eigensequence is of the form\[
	\Sel_{?}^\Sigma(\F(\phi\chi))\to\Sel(E^K[p]/\bQ_\cyc)\to\Sel_{?}^\Sigma(\F(\psi\chi)),
	\]
	where $?$ only depends on the ramification of $\phi$, $\psi$ but not on that of $\phi\chi$, $\psi\chi$. This is exactly the desired sequence since $E[p^\infty]\otimes\chi$ agrees with $E^K[p^\infty]$ from the discussion in~\cref{twistvsK}.
\end{proof}

\begin{corollary}\label[corollary]{2.5.3}
	The same analysis using decomposition of Selmer groups over $K$ shows that the Selmer group $\Sel(E^K/\bQ_\cyc)$ is identified with the kernel of\[
	\H^1(\bQ^\Sigma/\bQ,E[p^\infty]\otimes\chi)\to \H^1(\bQ_{\cyc,p},\Fil^-(E[p^\infty])\otimes\chi)\times\prod_{v\ne p}\H^1(\bQ_{\cyc,v},E[p^\infty]\otimes\chi).\]
	As before, we define its imprimitive version as\[
	\Sel^\Sigma(E^K/\bQ_\cyc)=\ker\bigl(\H^1(\bQ^\Sigma/\bQ,E[p^\infty]\otimes\chi)\to \H^1(\bQ_{\cyc,p},\Fil^-(E[p^\infty])\otimes\chi)\bigr),
	\]
	with its residual version\[
	\Sel^\Sigma(E^K[p]/\bQ_\cyc)=\ker\bigl(\H^1(\bQ^\Sigma/\bQ,E[p]\otimes\chi)\to \H^1(\bQ_{\cyc,p},\Fil^-(E[p])\otimes\chi)\bigr).
	\]
\end{corollary}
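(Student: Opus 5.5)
The plan is to run the argument of \cref{charforE^K} once more, now for the full $p^\infty$-torsion module rather than its residual version. Over $K=K_\chi$ the prime $p$ ramifies, and $E$ is ordinary over $\bQ$, hence ordinary over $K$. So $\Sel(E/K_\cyc)$ is the strict Greenberg Selmer group by \cref{Selinfo}(i): it is the kernel of
\[
\H^1(K^\Sigma/K_\cyc,E[p^\infty])\to \H^1(K_{\cyc,p},\Fil^-(E[p^\infty]))\times\prod_{w\nmid p}\H^1(K_{\cyc,w},E[p^\infty]).
\]
Let $\tau$ be the nontrivial element of $\Gal(K_\cyc/\bQ_\cyc)$, and let $\sigma$ be its image in $\Gal(K_{\cyc,p}/\bQ_{\cyc,p})$. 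I will decompose every term of this map into $\frac{1\pm\tau}{2}$ (respectively $\frac{1\pm\sigma}{2}$) eigencomponents. This is possible since $p$ is odd. The maps are equivariant because restriction commutes with conjugation. The ordinary filtration $\Fil^+(E[p^\infty])$ is $G_{\bQ_p}$-stable, so it is also $\sigma$-stable.

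The key step is to identify each $-$-part using inflation--restriction, with $\#\Gal=2$ prime to $p$. For any discrete $p$-primary $G_\bQ$-module $M$ one has $\H^1(K^\Sigma/K_\cyc,M)^-\iso \H^1(\bQ^\Sigma/\bQ_\cyc,M\otimes\chi)$. The same holds locally at $p$ with $M=\Fil^-(E[p^\infty])$, since $\chi|_{G_{\bQ_p}}$ cuts out $K_p$; this gives $\H^1(\bQ_{\cyc,p},\Fil^-(E[p^\infty])\otimes\chi)$.

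At a prime $v\ne p$ of $\bQ$ there are two cases. If $v$ is inert or ramified in $K$, the argument is the same as at $p$. If $v$ splits, then $\prod_{w\mid v}\H^1(K_{\cyc,w},-)$ is an induced module, and its $-$-part is again identified with $\H^1(\bQ_{\cyc,v},E[p^\infty]\otimes\chi)$: $\tau$ swaps the two factors, and the antidiagonal maps isomorphically onto one copy, on which $\chi$ is trivial locally.

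Taking $-$-parts of the kernel gives the kernel of the displayed map in the statement. By \cref{Selinfo}(iv) this $-$-part is $\Sel(E^K/\bQ_\cyc)$, which proves the first claim. The imprimitive and residual versions are definitions, so they need no proof. I would only remark that the same decomposition, applied to $\Sel^\Sigma$ and to $E[p]$, identifies them with the $-$-parts of the corresponding objects over $K$. This is the form in which they are used in \cref{charforE^K}.

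I expect the main obstacle to be the local condition at $p$. One has to check that the Kummer image for $E^K$ at $p$, an additive potentially ordinary curve, matches the $-$-part of the ordinary condition over $K_{\cyc,p}$. Equivalently, the kernel to $\H^1(I,\Fil^-)$, or to $\H^1(\bQ_{\cyc,p},\Fil^-\otimes\chi)$, should be the correct finite condition. My approach is to work over $K_{\cyc,p}$, where $E$ has ordinary reduction, so Greenberg's result identifies the Kummer image there with the kernel to $\H^1(K_{\cyc,p},\Fil^-)$. The Kummer map $\kappa_p$ is $\sigma$-equivariant when the twist is transported to $E^K\cong E$ over $K_p$, so taking $\sigma$-eigenparts transfers the identification down to $\bQ_{\cyc,p}$.
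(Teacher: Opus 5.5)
Your proposal is correct and follows essentially the paper's route. The paper justifies this only by pointing to the $\pm$-eigendecomposition over $K$ in \cref{Selinfo}(iv) and the remark after it: the $-$-part is identified with the $\chi$-twist by inflation--restriction, and Kummer images decompose compatibly. That is exactly what you spell out, in more detail at split primes and at $p$. Since the Kummer condition away from $p$ over $\bQ_\cyc$ is trivial for both $E$ and $E^K$, the only real content is the local identification at $p$ that you isolate in your last paragraph.
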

We mention that the local product $\prod_{v\ne p}\H^1(\bQ_{\cyc,v},E[p^\infty]\otimes\chi)$ away from $p$ can be identified with $\prod_{v\ne p}\H^1(\bQ_{\cyc,v},E^K[p^\infty])$, but at $p$, $\Fil^-(E[p^\infty])\otimes\chi$ is not the same as $\Fil^-(E[p^\infty]\otimes\chi)$ when $\chi$ is ramified (and thus cannot be written as $\Fil^-(E^K[p^\infty])$), because the latter is undefined as $E[p^\infty]\otimes\chi$ is no longer ordinary. In particular, $\Fil^-(E[p^\infty])\otimes\chi$ is allowed to be ramified and this is where the definition for potentially ordinary reduction diverges from the ordinary case.

\section{General quadratic twists of ordinary elliptic curves and Iwasawa Main Conjectures}\label{3}
In~\cref{KatoIMC}, we discussed the twists of the Selmer group and the $p$-adic $L$-function of an elliptic curve by powers of the mod-$p$ cyclotomic character $\omega$. In this section, we discuss general ramified quadratic twists of ordinary elliptic curves at Eisenstein primes. 

Let $E$ be an elliptic curve having ordinary reduction at $p\geq 3$ such that $E[p]$ is reducible, and let $\chi$ be a quadratic character corresponding to a quadratic field $K_\chi$. If $\chi$ is unramified at $p$, then $E^{K_\chi}$ is still ordinary at $p$, and the Iwasawa Main Conjecture is known in many cases by the results of~\cite{GV00},~\cite{CGS} and~\cite{KY24}.

On the other hand, if $\chi$ is ramified at $p$, $E^{K_\chi}$ will have bad potentially ordinary reduction at $p$, and we aim to show the Iwasawa Main Conjecture in some of these cases. To go from twisting by powers of $\omega$ to general ramified quadratic twists, we just need to notice that at an odd prime, the ramification must be tame since $p$ never divides $[K_\chi:\bQ]=2$, and hence any ramified quadratic character $\chi$ decomposes into a power of $\omega$ with an unramified character locally over $\bQ_p$. In other words, if $\chi$ is a quadratic ramified character of $G_\bQ$, there exists a power $i$ (which in fact is $\frac{p-1}{2}$) of $\omega$ such that $\chi\omega^i$ is unramified at $p$. We define $\chi_\unr$ to be this unramified part which must also be quadratic so that we can write $\chi=\chi_\unr\omega^{\frac{p-1}{2}}$. Concerning potentially multiplicative reduction, we have the following.

\begin{proposition}
	Any potentially multiplicative reduction comes from a twist by $\omega^{\frac{p-1}{2}}$ of an elliptic curve of multiplicative reduction.
\end{proposition}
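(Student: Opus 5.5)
We would combine \cref{untwist}(ii) with the factorization $\chi=\chi_\unr\omega^{\frac{p-1}{2}}$ described just above. Let $E$ have potentially multiplicative reduction at $p\geq 3$. By \cref{untwist}(ii) there is a quadratic field $K$ with $E^K$ of multiplicative reduction at $p$. In Galois-theoretic terms, $\rho_E\iso\rho_{E^K}\otimes\chi$, where $\chi$ is the quadratic character attached to $K$. The character $\chi$ must be ramified at $p$. Otherwise $\rho_E$ would be an unramified twist of a representation with multiplicative reduction. It would then still have a ramified $G_{\bQ_p}$-line with unramified quotient, so $E$ would itself be ordinary, i.e.\ of multiplicative rather than additive reduction at $p$ (recall that ordinary primes are exactly the non-additive ones). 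That contradicts the assumption.

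Next we write $\chi=\chi_\unr\omega^{\frac{p-1}{2}}$. Here we view $\omega^{\frac{p-1}{2}}$ as the quadratic Dirichlet character of conductor $p$, namely the Kronecker character of $\bQ(\sqrt{p^*})$, and $\chi_\unr$ as a quadratic character unramified at $p$. This is possible because the ramification of $\chi$ at the odd prime $p$ is tame and quadratic. Hence $\chi|_{I_p}$ is the unique quadratic character of tame inertia, which is $\omega^{\frac{p-1}{2}}|_{I_p}$. Consequently $\chi\omega^{\frac{p-1}{2}}$ is a global quadratic character unramified at $p$. We call it $\chi_\unr$, and since $\omega^{\frac{p-1}{2}}$ squares to $1$, we get $\chi=\chi_\unr\omega^{\frac{p-1}{2}}$.

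Then $\rho_E\iso(\rho_{E^K}\otimes\chi_\unr)\otimes\omega^{\frac{p-1}{2}}$. By \cref{untwist}(i), $\rho_{E^K}\otimes\chi_\unr$ is the quadratic twist of $E^K$ by the field $K_{\chi_\unr}$, in which $p$ is unramified. Call this curve $E_0$; it is an elliptic curve over $\bQ$. An unramified twist preserves the local shape at $p$: the ramified line with unramified quotient survives, and the inertia action is unchanged. Thus $E_0$ still has multiplicative reduction at $p$, possibly switching between split and non-split. Therefore $E\iso E_0\otimes\omega^{\frac{p-1}{2}}$, i.e.\ $E=E_0^{\bQ(\sqrt{p^*})}$, as claimed. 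If $\chi_\unr$ is trivial we simply take $E_0=E^K$.

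The main point needing care is the claim that unramified quadratic twisting preserves multiplicative reduction at $p$, together with the non-primitivity issue that $\chi_\unr$ may be non-primitive. The latter is harmless because we work with Galois representations, as in the discussion of \cref{untwist}. For the former we would argue locally. Over $\bQ_p$, $\chi_\unr$ is an unramified character, and $E^K$ is a Tate curve up to an unramified quadratic twist. A further unramified quadratic twist is again such a curve, so the conductor exponent at $p$ stays equal to $1$.
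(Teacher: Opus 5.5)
Your proposal is correct and is essentially the paper's proof. Like the paper, you use \cref{untwist}(ii) to write $E$ as a ramified quadratic twist of a multiplicative curve, factor $\chi=\chi_\unr\omega^{\frac{p-1}{2}}$, and absorb $\chi_\unr$ into the multiplicative curve. You also spell out three points the paper leaves implicit, and these are fine: that $\chi$ must be ramified at $p$, that $\chi|_{I_p}=\omega^{\frac{p-1}{2}}|_{I_p}$, and that an unramified quadratic twist preserves multiplicative reduction.
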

\begin{proof}
  Assume $E$ has potentially multiplicative reduction at $p$. From~\cref{untwist}, there is a quadratic ramified character $\chi$ corresponding to a quadratic field $K_\chi$ such that $E=E_0^{K_\chi}$ for some $E_0$ of multiplicative reduction. Then $E$ is the $\omega^{\frac{p-1}{2}}$-twist of $E_0^{K_\unr}$ where $K_\unr$ is the quadratic field corresponding to $\chi_\unr$ and $E_0^{K_\unr}$ has multiplicative reduction.
\end{proof}

\begin{remark}
	It is easy to see that rewriting $E=E_0^{K_\chi}$ as $E_0^{K_\unr}\otimes \omega^{\frac{p-1}{2}}$ won't affect admissibility, since it is a property about the twisted object.
\end{remark}

Thus without loss of generality, we are free to replace any ramified quadratic twist by an $\omega^{\frac{p-1}{2}}$-twist. In particular, Kato's $p$-adic $L$-function and the divisibility on Iwasawa Main Conjecture covers all ramified quadratic twists, including all those corresponding to elliptic curves with potentially multiplicative reduction. We denote the $\chi$-twisted $p$-adic $L$-function by $\cL(E,\chi)$. Then Kato's divisibility reads\begin{equation}\tag{Kato}\label{Katogeneral}
\Char_{\Lambda}(\Sel_{\Gr_0}(E\otimes\chi/\bQ_\cyc)^\vee)\supset (\cL(E,\chi)).
\end{equation}

\section{Comparing Iwasawa invariants for admissible quadratic twists}\label{4}
In this section, we extend Greenberg--Vatsal's method to compare Iwasawa invariants for admissible quadratic twists of elliptic curves in the Eisenstein case. Let $E$ be an elliptic curve of ordinary reduction at $p\geq 3$ such that $E[p]$ fits into the exact sequence of $G_\bQ$-modules\[
0\to \F(\phi) \to E[p]\to \F(\psi) \to 0
\]
with $\phi\psi=\omega$. For a quadratic character $\chi$, $E[p]\otimes \chi$ fits into the short exact sequence\[
0\to \F(\phi\chi) \to E[p]\otimes\chi \to \F(\psi\chi) \to 0
\]
with $(\phi\chi)(\psi\chi)=\omega$, so in particular, one of $\phi\chi,\psi\chi$ is odd and the other is even. Unlike for $\phi$ and $\psi$, $\phi\chi$ and $\psi\chi$ can now both be ramified.

We now record a lemma allowing us to compute the $\mu$- and $\lambda$-invariant of the strict Greenberg's Selmer group for quadratic twists of $E$ using knowledge about Selmer groups of $\phi$ and $\psi$. These are analogous to those in~\cite{GV00} for an ordinary elliptic curve. Since we are mostly interested in elliptic curves with potentially ordinary reduction, we will assume that $\chi$ is an \emph{ramified} quadratic character.

\begin{lemma}\label[lemma]{GV2.4}
	(i) The global-to-local map\[
	\H^1(\bQ^\Sigma/\bQ_\cyc,E[p^\infty]\otimes\chi)\to \H^1(\bQ_{\cyc,p},\Fil^-(E[p^\infty])\otimes\chi)\times\prod_{v\ne p}\H^1(\bQ_{\cyc,v},E[p^\infty]\otimes\chi)
	\] defining $\Sel(E^{K_\chi}/\bQ_\cyc)$ is surjective. Consequently,\[\Sel^\Sigma(E^{K_\chi}/\bQ_\cyc)/\Sel(E^{K_\chi}/\bQ_\cyc)\iso\prod_{v\in\Sigma-\{p\}}\H^1(\bQ_{\cyc,v},E[p^\infty]\otimes\chi).\]\\
	(ii) For a prime $\ell\ne v,\infty$, $\mu(\H^1(\bQ_{\cyc,\ell},E[p^\infty]\otimes\chi)^\vee)=0$ and $\lambda(\H^1(\bQ_{\cyc,\ell},E[p^\infty]\otimes\chi)^\vee)=s_\ell d_\ell$ for $s_\ell,d_\ell$ described in~\cite[Propositoin (2.4)]{GV00}.\\
	(iii) $\mu(\Sel(E^{K_\chi}/\bQ_\cyc))=0\Leftrightarrow \mu(\Sel^\Sigma(E^{K_\chi}[p]/\bQ_\cyc))=0.$
\end{lemma}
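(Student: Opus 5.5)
The plan is to reduce everything to the untwisted statements of \cite[Proposition 2.4]{GV00} over the quadratic field $K=K_\chi$, and then take the $-$-eigencomponent under $\Gal(K_\cyc/\bQ_\cyc)$, as in \cref{Selinfo}(iv) and \cref{2.5.3}. Over $K$, the curve $E$ is ordinary at the unique prime above $p$. The Greenberg--Vatsal argument, a Poitou--Tate/Cassels argument, then gives surjectivity of the global-to-local map defining $\Sel(E/K_\cyc)$. Its input is that $\Sel(E/K_\cyc)$ is $\Lambda$-cotorsion and that $\H^0(K_\cyc,E[p^\infty])$ is finite, together with $\H^2(K^\Sigma/K_\cyc,E[p^\infty])=0$. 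Cotorsion over $K_\cyc$ follows from the splitting $\Sel(E/K_\cyc)=\Sel(E/\bQ_\cyc)\oplus\Sel(E^K/\bQ_\cyc)$, together with Kato's divisibility \eqref{Katogeneral} for both $\mathbf 1$ and $\chi$ and the non-vanishing of $\cL(E,\mathbf 1)$ and $\cL(E,\chi)$ (Rohrlich).

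\textbf{Part (i).} The restriction maps commute with the involution $\tau$ (identified with $\sigma$ locally at $p$, and acting compatibly on the product over primes of $K_\cyc$ above each $v\neq p$). Hence surjectivity over $K$ splits into surjectivity on each eigenspace. The $-$-part of the target is exactly
\[\H^1(\bQ_{\cyc,p},\Fil^-(E[p^\infty])\otimes\chi)\times\prod_{v\ne p}\H^1(\bQ_{\cyc,v},E[p^\infty]\otimes\chi),\]
by inflation-restriction, since $p$ is odd. The quotient isomorphism for $\Sel^\Sigma/\Sel$ is then formal from the definitions.

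The main obstacle is justifying the surjectivity input over $K_\cyc$. One approach is to run the Greenberg--Vatsal argument directly for $E[p^\infty]\otimes\chi$ over $\bQ$. The required inputs are:
\begin{itemize}
\item cotorsion of $\Sel(E^K/\bQ_\cyc)$, which comes from \eqref{Katogeneral};
\item vanishing of $\H^0(\bQ_\cyc,E[p^\infty]\otimes\chi)$ up to finite error;
\item the corank count $\corank_\Lambda\H^1(\bQ^\Sigma/\bQ_\cyc,E[p^\infty]\otimes\chi)=1$, which is equal to the corank of the local condition at $p$ because $\Fil^-\otimes\chi$ has corank one and $\chi$ is odd or even compatibly.
\end{itemize}
The cokernel of the global-to-local map is then controlled by the dual Selmer group of the $\Lambda$-adic Tate twist, which has no nonzero finite submodules and is torsion; this forces the cokernel to vanish. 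A finite $\H^0$ can be dealt with exactly as in \cite{GV00}.

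\textbf{Part (ii).} This is purely local at $\ell\ne p$, where $\chi$ is just a character of $G_{\bQ_\ell}$. I will repeat the computation of \cite[Proposition 2.4]{GV00} verbatim with $E[p^\infty]$ replaced by $E[p^\infty]\otimes\chi$. The group $\H^1(\bQ_{\cyc,\ell},-)$ is dual to a sum of copies of $\H^0$ over the $s_\ell$ primes of $\bQ_\cyc$ above $\ell$, which has finite $\bZ_p$-rank. Therefore $\mu=0$ and $\lambda=s_\ell d_\ell$, where $d_\ell$ is the $\bZ_p$-corank of $(E[p^\infty]\otimes\chi)_{I_\ell}$ fixed by Frobenius.

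\textbf{Part (iii).} Combine (i) and (ii): $\Sel$ and $\Sel^\Sigma$ differ by modules with $\mu=0$, so they have the same $\mu$. Then apply \cref{Selinfo}(iii) over $K$ to $\Sel^\Sigma(E/K_\cyc)$ and take $-$-parts. The map from $\Sel^\Sigma(E^K[p]/\bQ_\cyc)$ into $\Sel^\Sigma(E^K/\bQ_\cyc)[p]$ has finite kernel and cokernel, coming from $\H^0$ terms and local $\H^0$ of $\Fil^-\otimes\chi$. This gives the equivalence.
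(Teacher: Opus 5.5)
Your proposal is correct. Parts (ii) and (iii) follow the paper. Part (ii) is a verbatim rerun of \cite[Proposition (2.4)]{GV00} at $\ell\ne p$. Part (iii) compares $\Sel^\Sigma(E^{K_\chi}[p]/\bQ_\cyc)$ with $\Sel^\Sigma(E^{K_\chi}/\bQ_\cyc)[p]$ up to finite $\H^0$-errors, as in the proof of \cite[Proposition 3.1.3]{LY26}. Quoting \cref{Selinfo}(iii) over $K$ and ``taking $-$-parts'' would not by itself give the eigenspace statement, but the direct comparison you then give does.

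For (i), your main route is genuinely different from the paper's.

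\emph{The paper's route.} It stays over $\bQ$ and feeds the twisted, non-ordinary module $E[p^\infty]\otimes\chi$ into the surjectivity criterion \cite[Proposition A.2]{PollackWeston2011}. Besides cotorsion (Kato--Rohrlich over $K_\chi$) and finiteness of $\H^0(\bQ_\cyc,E[p^\infty]\otimes\chi)$ (Ribet), it must check two things. First, the local condition at $p$, namely the image of $\H^1(\bQ_{\cyc,p},\Fil^+(E[p^\infty])\otimes\chi)$, has $\Lambda$-corank $1$ with finite kernel. This rests on finiteness of $\H^0(\bQ_{\cyc,p},\Fil^-(E[p^\infty])\otimes\chi)$, which is where the ramification of $\chi$ is used. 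Second, $\Fil^+(E[p^\infty])\otimes\chi$ is neither $\bQ_p/\bZ_p$ nor $\bQ_p/\bZ_p(1)$.

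\emph{Your route.} You quote the untwisted surjectivity for $E/K_\cyc$, where $E$ is honestly ordinary at the prime above $p$, so \cite[Lemma 4.6]{Greenberg1999} applies as stated. You then project to the $\tau=-1$ eigenspace. This is exactly how the paper itself argues later, in Section~\ref{cyccontrol}, when computing $\ker(g)$.

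\emph{Comparison.} Your route avoids checking the hypotheses of a general criterion for a representation with ramified ``$\Fil^-$''. The cost is that you need the inputs over $K$ (cotorsion, finiteness of $E(K_\cyc)[p^\infty]$). You also need to identify the Kummer quotient at $\mathfrak{p}$ with $\H^1(K_{\cyc,\mathfrak{p}},\Fil^-(E[p^\infty]))$, via $\cd_p(G_{K_{\cyc,\mathfrak{p}}})\le 1$, and you should state this step explicitly. The paper's route, by contrast, makes visible exactly where the ramification of $\chi$ enters.

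Two small fixes to your fallback sketch over $\bQ$. First, $\H^1(\bQ^\Sigma/\bQ_\cyc,E[p^\infty]\otimes\chi)$ has corank $1$ regardless of the parity of $\chi$, since complex conjugation still has eigenvalues $\pm1$ on the twist. Second, the compact dual Selmer module vanishes because it is $\Lambda$-torsion-free of rank $0$. Being torsion without nonzero finite submodules would not force it to vanish.
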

\begin{proof}
	(i) First, note that $\Sel(E^{K_\chi}/\bQ_\cyc)$ is $\Lambda$-cotorsion, which follows from the cotorsionness after base change to $K_\chi$ by results of Kato and Rohrlich (see~\cite[Theorem 1.5]{Greenberg1999}). Second, $\H^0(\bQ_\cyc, E[p^\infty]\otimes\chi)=\H^0(\bQ_\cyc,E^{K_\chi}[p^\infty])$ is finite by the theorem of Ribet~\cite{Ribet}. Third, one can rewrite the local term $\H^1(\bQ_{\cyc,p},\Fil^-(E[p^\infty])\otimes\chi)$ as $\H^1(\bQ_{\cyc,p}.E[p^\infty]\otimes\chi)/\H^1_\ord$ where the local condition $\H^1_\ord$ can be expressed as $\im(\H^1(\bQ_{\cyc,p},\Fil^+(E[p^\infty])\otimes\chi)\to\H^1(\bQ_{\cyc,p}.E[p^\infty]\otimes\chi))$, which has kernel bounded by $\#\H^0(\bQ_{\cyc,p},\Fil^-(E[p^\infty])\otimes\chi)<\infty$ as $\chi$ is ramified. Now from~\cite[Lemma 2.3]{Greenberg1999}, the $\Lambda$-corank of the domain (hence the image) is $1$. Note that $\Fil^+(E[p^\infty])\otimes\chi$ can be neither $\bQ_p/\bZ_p$ nor $\bQ_p/\bZ_p(1)$ since it is ramified but becomes unramified mod $p$ when $\chi$ is ramified and quadratic. Now the surjectivity follows from~\cite[Proposition A.2]{PollackWeston2011}. The comparison between the imprimitive and primitive Selmer group follows.\\
	(ii) This follows exactly from~\cite[Proposition (2.4)]{GV00} for $E^{K_\chi}$ since it is independent from reduction type at $p$.\\
	(iii) Regarding (i), (ii) above, this follows from the proof of~\cite[Proposition 3.1.3]{LY26}. 
	
\end{proof}

\subsection{The admissible twists}
For admissible twists, we will directly adopt the computation in~\cite{GV00} to compute the $\mu$- and $\lambda$-invariants of the Selmer groups to the $p$-adic $L$-functions. Let $\chi$ be a quadratic ramified character with opposite parity to the unramified character of $\phi$, $\psi$, and let $K_\chi$ be the quadratic field corresponding to $\chi$. We will divide the admissibility into two subcategories according to the parity of the quadratic twist.

\subsubsection{The $+$-admissible twists}

Under~\eqref{GV}, the unramified character is odd, so $\chi$ must be even. Then there are two cases:
\begin{enumerate}
	\item $\phi$ is ramified and even, and $\psi$ is unramified and odd;
	\item $\phi$ is unramified and odd, and $\psi$ is ramified and even.
\end{enumerate}

In case (i), the imprimitive residual Selmer group for $E^{K_\chi}$ fits into the following exact sequence by~\cref{charforE^K}:\[
\Sel_\rel^\Sigma(\F(\phi\chi)) \to \Sel^\Sigma(E[p]\otimes \chi/\bQ_\cyc)\to \Sel_\str^\Sigma(\F(\psi\chi)).\]
The vanishing of $\mu$-invariant of $\Sel^\Sigma(E[p]\otimes \chi/\bQ_\cyc)^\vee$ immediately follows from the vanishing of $\mu$ for $\Sel_\rel^\Sigma(\F(\phi\chi))^\vee$ (as $\phi\chi$ is even) and $\Sel_\str^\Sigma(\F(\psi\chi))^\vee$. Similarly, the vanishing of $\mu$ also follows in case (ii). 

Now note that the $\lambda$-invariant is constant in the isogeny class of $E^{K_\chi}$ (which replies on the fact that composing with dual isogeny yields a multiplication-by-degree map, and is independent of reduction type), so to compute the $\lambda$-invariant of $\Sel^\Sigma(E[p]\otimes \chi/\bQ_\cyc)$, we can assume we are in case (i). In particular, $\phi\chi$ is even, and $\psi\chi$ is odd and ramified.

Then we have a commutative diagram{\small\begin{equation*}\begin{tikzcd}
		& \Sel_{\rel}^\Sigma(\F(\phi\chi)) \ar[r]\ar[d]&\Sel^\Sigma(E[p]\otimes\chi/\bQ_\cyc)  \ar[r]\ar[d]&  \Sel^\Sigma_\str(\F(\psi\chi)) \ar[d] \\
		&  \H^1(\bQ^\Sigma/\bQ_\cyc,\F(\phi\chi))\ar[r]\ar[d]& \H^1(\bQ^\Sigma/\bQ_\cyc,E[p]\otimes\chi)\ar[r]\ar[d]& \H^1(\bQ^\Sigma/\bQ_\cyc,\F(\psi\chi))
		\ar[d]\\
		& 0 \ar[r]& \H^1(\bQ_{\infty,p},\Fil^-(E[p])\otimes\chi)\ar[r,"="]& \H^1(\bQ_{\infty,p},\F(\psi\chi)),
\end{tikzcd}\end{equation*}}
where the second row is also exact on the left and right since $\phi\chi\ne\omega$ and $\psi\chi\ne\mathbf{1}$. Then the snake lemma yields the short exact sequence\[
0\to\Sel_{\rel}^\Sigma(\F(\phi\chi))\to \Sel^\Sigma(E[p]\otimes\chi/\bQ_\cyc) \to \Sel^\Sigma_\str(\F(\psi\chi))\to 0,
\]
from which we obtain\begin{itemize}
	\item $\mu(\Sel^\Sigma(E[p]\otimes\chi/\bQ_\cyc)^\vee)=0$,
	\item $\lambda(\Sel^\Sigma(E\otimes\chi/\bQ_\cyc)^\vee)=\lambda(\Sel_\rel^\Sigma(\bQ_p/\bZ_p(\phi\chi))^\vee)+\lambda(\Sel_\str^\Sigma(\bQ_p/\bZ_p(\psi\chi))^\vee)$,
\end{itemize}
provided that $\phi\chi\ne \mathbf{1}$ so that $\H^0(\bQ,\F(\phi\chi))=\H^0(\bQ,E[p]\otimes\chi)=0$. The second equation is then from~\cite[Proposition 2.8]{GV00}, where we use the replacement $D^{I_\eta}=0$ in Remark 2.9 rather than assuming $D$ is unramified in \emph{op. cit.} The assumption $\phi\chi\ne \mathbf{1}$ is reasonable because Iwasawa Main Conjecture is inconclusive for the trivial character.

On the analytic side, again assume that $\phi\chi\ne \mathbf{1}$ so that the Kubota--Leopoldt $p$-adic $L$-function is holomorphic. Then~\cite[Theorem (3.11)]{GV00} implies that $\mu(\cL_p^\Sigma(E,\chi))$ is congruent mod imprimitive $p$ to their $p$-adic $L$-functions associated to $\phi\chi$ and $\psi\chi$. In particular, we obtain\begin{itemize}
	\item $\mu(\cL^\Sigma(E,\chi))=0$,
	\item $\lambda(\cL^\Sigma(E,\chi))=\lambda(\cL^\Sigma_{\phi\chi})+\lambda(\cL^\Sigma_{\psi\chi})$.
\end{itemize}
Then as in~\cite{GV00}, Mazur--Wiles's theorem shows that $\lambda(\Sel_\rel^\Sigma(\bQ_p/\bZ_p(\phi\chi))^\vee)=\lambda(\cL^\Sigma_{\phi\chi})$ and $\lambda(\Sel_\rel^\Sigma(\bQ_p/\bZ_p(\psi\chi))^\vee)=\lambda(\cL^\Sigma_{\psi\chi})$, so in particular $\lambda(\Sel^\Sigma(E\otimes\chi/\bQ_\cyc)^\vee)=\lambda(\cL^\Sigma(E,\chi))$. Thus as before,~\cite[Proposition (2.4)]{GV00} applied to $E^{K_\chi}$ implies that $\lambda(\Sel(E\otimes\chi/\bQ_\cyc)^\vee)=\lambda(\cL(E,\chi))$, which, combined with the vanishing of $\mu$-invariants (see~\cref{GV2.4}(iii)) and Kato's divisibility~\eqref{Katogeneral}, proves the Iwasawa Main Conjecture for the twist $E^{K_\chi}$, i.e.,\[
\Char_{\Lambda}(\Sel(E^{K_\chi}/\bQ_\cyc)^\vee)=(\cL(E,\chi)).\]

\subsubsection{The $-$-admissible twists}
In addition to the condition~\eqref{GV} with twists by an even $\chi$, the theory developed in \emph{loc. cit.} in fact covers another type of admissible twists, namely the opposite condition of~\eqref{GV} with an odd twist. More specifically, we assume the unramified character of $\phi,\psi$ is even, and $\chi$ be an odd ramified quadratic character. Then as before, there are two cases:
\begin{enumerate}
	\item $\phi$ is ramified and odd, and $\psi$ is unramified and even;
	\item $\phi$ is unramified and even, and $\psi$ is ramified and odd.
\end{enumerate}
In case (i), there is an exact sequence coming from~\cref{charforE^K}:\[
\Sel_\rel^\Sigma(\F(\phi\chi)) \to \Sel^\Sigma(E[p]\otimes \chi/\bQ_\cyc)\to \Sel_\str^\Sigma(\F(\psi\chi)),
\]
and as before, the vanishing of $\mu(\Sel^\Sigma(E[p]\otimes\chi/\bQ_\cyc)^\vee)$ follows from the vanishing of $\mu(\Sel_\rel^\Sigma(\F(\phi\chi))^\vee)$ (as $\phi\chi$ is even) and $\mu(\Sel_\str^\Sigma(\F(\psi\chi))^\vee)$. The vanishing of $\mu$ in case (ii) follows similarly.

Again, to compute $\lambda(\Sel^\Sigma(E[p]\otimes\chi/\bQ_\cyc)^\vee)$, we may assume we are in case (i). In this case, $\phi\chi$ is even, and $\psi\chi$ is ramified and odd. We still have the same diagram{\small\begin{equation*}\begin{tikzcd}
			& \Sel_{\rel}^\Sigma(\F(\phi\chi)) \ar[r]\ar[d]&\Sel^\Sigma(E[p]\otimes\chi/\bQ_\cyc)  \ar[r]\ar[d]&  \Sel^\Sigma_\str(\F(\psi\chi)) \ar[d] \\
			&  \H^1(\bQ^\Sigma/\bQ_\cyc,\F(\phi\chi))\ar[r]\ar[d]& \H^1(\bQ^\Sigma/\bQ_\cyc,E[p]\otimes\chi)\ar[r]\ar[d]& \H^1(\bQ^\Sigma/\bQ_\cyc,\F(\psi\chi))
			\ar[d]\\
			& 0 \ar[r]& \H^1(\bQ_{\infty,p},\Fil^-(E[p])\otimes\chi)\ar[r,"="]& \H^1(\bQ_{\infty,p},\F(\psi\chi)).
\end{tikzcd}\end{equation*}}
Now as in the case of $+$-admissible twists, we obtain\begin{itemize}
	\item $\mu(\Sel^\Sigma(E[p]\otimes\chi/\bQ_\cyc)^\vee)=0$,
	\item $\lambda(\Sel^\Sigma(E\otimes\chi/\bQ_\cyc)^\vee)=\lambda(\Sel_\rel^\Sigma(\bQ_p/\bZ_p(\phi\chi))^\vee)+\lambda(\Sel_\str^\Sigma(\bQ_p/\bZ_p(\psi\chi))^\vee)$,
\end{itemize}
provided that $\phi\chi\ne \mathbf{1}$. 
On the analytic side, the proof of~\cite[Theorem (3.11)]{GV00} and~\cite[Theorem (2.10)]{Vat99} still applies because the admissible sign is now $-$. Thus we obtain the congruence\[
\cL^\Sigma(E,\chi)=\cL^\Sigma_{\phi\chi}\cL^\Sigma_{\psi\chi}\pmod{p\Lambda},
\]
as well as the relations\begin{itemize}
	\item $\mu(\cL^\Sigma(E,\chi))=0$,
	\item $\lambda(\cL^\Sigma(E,\chi))=\lambda(\cL^\Sigma_{\phi\chi})+\lambda(\cL^\Sigma_{\psi\chi})$.
\end{itemize}
Again, Mazur--Wiles's theorem allow us to equate the algebraic Iwasawa invariants of $E$ to the analytic Iwasawa invariants, which combined with Kato's divisibility yields the Iwasawa Main Conjecture. We have thus proved the following.
\begin{theorem}\label{IMC}
	Let $E$ be an elliptic curve with ordinary reduction at $p$ such that $E[p]$ is reducible over $G_\bQ$. Let $\chi$ be a ramified quadratic admissible twist of $E$. Assume $\H^0(\bQ,E'[p]\otimes\chi)=0$ for all $E'$ isogenous to $E$. Then the Iwasawa Main Conjecture for the twist $E^{K_\chi}$ is true, i.e.,\[
	\Char_{\Lambda}(\Sel(E^{K_\chi}/\bQ_\cyc)^\vee)=(\cL(E,\chi)).\]
\end{theorem}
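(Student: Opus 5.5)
The plan is to upgrade Kato's divisibility \eqref{Katogeneral} to an equality by matching Iwasawa invariants, following the last paragraph of \cref{charforE^K}'s framework in \cref{4}. By the observation at the end of the cyclotomic Iwasawa theory subsection, if $\Char_\Lambda(X_1)\subset\Char_\Lambda(X_2)$ and both invariants agree, the ideals are equal. So we must show
\[
\mu(\Sel(E^{K_\chi}/\bQ_\cyc)^\vee)=\mu(\cL(E,\chi))=0,\qquad \lambda(\Sel(E^{K_\chi}/\bQ_\cyc)^\vee)=\lambda(\cL(E,\chi)).
\]
First we reduce to $\chi=\omega^{\frac{p-1}{2}}$ by absorbing $\chi_\unr$ into $E$, which preserves ordinariness and admissibility by the remark in \cref{3}. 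Since $\lambda$ (and the characteristic ideal up to the $\mu$ bookkeeping) is an isogeny invariant, and the hypothesis $\H^0(\bQ,E'[p]\otimes\chi)=0$ is imposed on the whole isogeny class, we may use Ribet's lemma to pick $E'$ in case (i) of the $\pm$-admissible subsections: the submodule character $\phi$ is ramified, so that $\phi\chi$ is even and $\psi\chi$ is odd and ramified.

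The algebraic side is as follows. The hypothesis gives $\phi\chi\ne\mathbf{1}$, and $\psi\chi\ne\mathbf{1}$ because it is ramified. Hence the middle row of the displayed diagram is short exact; at the left because $\H^0(\bQ_\cyc,\F(\psi\chi))=0$, and at the right because $\H^2$ of $\F(\phi\chi)$ has no $\mu$-contribution beyond what is controlled by $\phi\chi\ne\omega$. The snake lemma then gives
\[
0\to\Sel^\Sigma_\rel(\F(\phi\chi))\to\Sel^\Sigma(E[p]\otimes\chi/\bQ_\cyc)\to\Sel^\Sigma_\str(\F(\psi\chi))\to0.
\]
By Mazur--Wiles (\cref{IMCoddeven}), both outer terms have $\mu=0$: the left because it is a relaxed group for an even character, and the right because it is a strict group for an odd one. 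Hence the imprimitive residual $\mu$ vanishes, and \cref{GV2.4}(iii) gives $\mu=0$ for the primitive group. The lifted version of \cite[Proposition 2.8]{GV00}, with the replacement $D^{I_\eta}=0$, gives additivity of $\lambda$ for the $p^\infty$ imprimitive Selmer groups.

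On the analytic side, \cite[Theorem 3.11]{GV00}, or \cite[Theorem 2.10]{Vat99} in the $-$-admissible case, gives the congruence $\cL^\Sigma(E,\chi)\equiv\cL^\Sigma_{\phi\chi}\cL^\Sigma_{\psi\chi}\pmod{p\Lambda}$. This uses admissibility to control the canonical periods. The Kubota--Leopoldt factors have $\mu=0$ by Ferrero--Washington, so $\mu(\cL^\Sigma(E,\chi))=0$ and $\lambda$ is additive. Mazur--Wiles then identifies $\lambda(\cL^\Sigma_{\theta\chi})$ with the $\lambda$ of the corresponding relaxed or strict Selmer group; the matching is correct precisely because the ramification-determined choice agrees with the parity-determined one. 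This yields the imprimitive equality of $\lambda$-invariants. To descend to the primitive objects, use \cref{GV2.4}(i),(ii): the algebraic side loses $\sum_{\ell\in\Sigma\setminus\{p\}}s_\ell d_\ell$. On the analytic side, $\cL^\Sigma=\cL\cdot\prod_\ell P_\ell(\chi\ell^{-1}\gamma_\ell)$, and the Euler factor has the same $\lambda$.

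I expect the main obstacle to be the analytic congruence for a ramified twist: checking that \cite{GV00}'s period comparison still works when $\chi$ is tamely ramified at $p$, so that Kato's branch $\cL(E,\omega^{\frac{p-1}{2}})$ really is congruent to the product of Kubota--Leopoldt branches with matching Euler factors at $p$. I would first try to verify this directly from the modular-symbol description of the $\omega^i$-branch, applying Vatsal's congruence of canonical periods with the sign fixed by $\chi(-1)$.
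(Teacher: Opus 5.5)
Your proposal is correct and follows the paper's proof essentially step for step: you pass within the isogeny class to case (i), obtain $0\to\Sel^\Sigma_\rel(\F(\phi\chi))\to\Sel^\Sigma(E[p]\otimes\chi/\bQ_\cyc)\to\Sel^\Sigma_\str(\F(\psi\chi))\to 0$ from the snake lemma, use Mazur--Wiles and \cite[Proposition 2.8]{GV00} on the algebraic side and the Greenberg--Vatsal/Vatsal congruence on the analytic side, descend to primitive objects via \cref{GV2.4}, and close with Kato's divisibility. Two points of precision, equally loose in the paper: first, right-exactness of the middle row rests on $\H^2(\bQ^\Sigma/\bQ_\cyc,\F(\phi\chi))=0$ (weak Leopoldt plus Ferrero--Washington) rather than on $\phi\chi\neq\omega$; second, since $\mu$ is not an isogeny invariant, you should either check $\mu=0$ directly in case (ii), as the paper does, or note that $\mu(\cL(E,\chi))=0$ for the original $E$, together with Kato's divisibility and the isogeny-invariant $\lambda$-equality, already forces the equality.
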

Here the non-torsion hypothesis just means $\F(\mathbf{1})$ is not a $G_\bQ$-invariant submodule or quotient of $E[p]\otimes\chi$. For similar results on many unramified admissible twists (but only the $+$-cases), see~\cite{GV00},~\cite{CGS} and~\cite{KY24}.

\subsection{The non-admissible twists}\label{nonadGr}
In the non-admissible situation, we shall see that the conditions\begin{itemize}
	\item $\mu(\Sel^\Sigma(E[p]\otimes\chi/\bQ_\cyc)^\vee)=0$,
	\item $\lambda(\Sel^\Sigma(E\otimes\chi/\bQ_\cyc)^\vee)=\lambda(\Sel_?(\phi\chi/\bQ_\cyc)^\vee)+\lambda(\Sel_?(\psi\chi/\bQ_\cyc)^\vee)$,
\end{itemize}   and the congruence \begin{itemize}
\item $\cL_p(E,\chi)\equiv \cL_{\phi\chi}\cL_{\psi\chi}\pmod{p\Lambda}$
\end{itemize} are no longer expected to hold, due to the misalignment in the choices of Selmer groups for $\phi,\psi$ using ramification v.s. using parity. For example, when $\phi$ is even and ramified, but $\chi$ is odd, then we are in the situation of the following diagram{\small\begin{equation*}\begin{tikzcd}
		& \Sel_{\rel}^\Sigma(\F(\phi\chi)) \ar[r]\ar[d]&\Sel^\Sigma(E[p]\otimes\chi/\bQ_\cyc)  \ar[r]\ar[d]&  \Sel^\Sigma_\str(\F(\psi\chi)) \ar[d] \\
		&  \H^1(\bQ^\Sigma/\bQ_\cyc,\F(\phi\chi))\ar[r]\ar[d]& \H^1(\bQ^\Sigma/\bQ_\cyc,E[p]\otimes\chi)\ar[r]\ar[d]& \H^1(\bQ^\Sigma/\bQ_\cyc,\F(\psi\chi))
		\ar[d]\\
		& 0 \ar[r]& \H^1(\bQ_{\infty,p},\Fil^-(E[p])\otimes\chi)\ar[r,"="]& \H^1(\bQ_{\infty,p},\F(\psi\chi)),
\end{tikzcd}\end{equation*}}
where $\phi\chi$ is odd. If $\phi\chi$ also happens to be ramified, the $\Sel_\rel^\Sigma(\F(\phi\chi))^\vee$ will have a positive $\mu$-invariant, hence $\Sel^\Sigma(E[p]\otimes\chi/\bQ_\cyc)^\vee$ will, too. This inadmissibility introduces at least three major issues:\begin{itemize}
	\item With positive $\mu$-invariants, the lambda invariants no longer additive in short exact sequences.
	\item Even worse, $\Sel^\Sigma_\rel(\F(\phi\chi))$ is not the correct Selmer group appearing in the Iwasawa Main Conjecture for an odd character $\phi\chi$, so Mzaur--Wiles's theorem cannot be applied. Same with $\Sel^\Sigma_\str(\F(\phi\chi))$ for an even character $\psi\chi$.
	\item On the analytic side, the period with the prescribed sign becomes non-admissible, and the congruence fails completely for deep reasons.
	\end{itemize}

Therefore, the methods of Greenberg--Vatsal break down in the non-admissible case, and it probably requires completely new ideas to deal with it. If $E$ has good ordinary reduction, Castella--Grossi--Skinner~\cite{CGS} take a different path and use Beilinson--Flach elements to connect the cyclotomic Iwasawa Main Conjectures to anticyclotomic Iwasawa Main Conjectures, which successfully bypass the difficulty in the Iwasawa invariants comparison. Their approach has been generalized in~\cite{KY24} for more ordinary Eisenstein primes, but it is unclear at the moment if it also covers (ramified) twists of ordinary elliptic curves. Therefore, we will not attempt to treat the non-admissible Iwasawa Main Conjecture in this paper. Instead, we verify Greenberg's $\mu=0$ conjecture, another major conjecture in Iwasawa theory for elliptic curves.

\begin{conjecture}[Greenberg]\label{Greenberg}
	Let $E$ be an elliptic curve and $p$ be a prime. Assume that $\Sel_{\Gr_0}(E/\bQ_\infty)$ is $\Lambda$-cotorsion. Then there exists an elliptic curve $E'$ in the $\bQ$-isogeny class of $E$ such that $\mu(\Sel_{\Gr_0}(E'/\bQ_\infty)^\vee)=0$.
\end{conjecture}
In the good ordinary Eisenstein case, this was proved in~\cite{GV00} under their assumption~\eqref{GV}. No substantial progress has been made in other cases until recently, when the non-\eqref{GV} case was proved in~\cite{LY26}, which also covers multiplicative reduction. The irreducible case, on the other hand, remains widely open, but the approach in~\cite{LY26} does allow irreducibility conditional on deep inputs, and in very special cases for $p=2$, the irreducible $\mu=0$ can be confirmed.

In the remaining of this section, we study Greenberg's conjecture for ramified quadratic twists, i.e., corresponding to elliptic curves of potentially ordinary reductions, which makes sense since the Selmer group is still known to be $\Lambda$-cotorsion (see the proof of~\cref{GV2.4}(i)).

For admissible twists, we showed in the previous subsection that the $\mu$-invariant is always $0$, without the need to move to an isogenous curve. The example at the beginning of this subsection shows that it is not true for non-admissible twists. We will closely follow the methods in~\cite{LY26} to treat the non-admissible case.

First, notice that the only source of a positive $\mu$-invariant in the Eisenstein case is those coming from $\Sel^\Sigma_\rel(\F(\theta))$ the relaxed Selmer group for odd and ramified characters. If such Selmer group becomes a subgroup of $\Sel^\Sigma(E[p]\otimes\chi/\bQ_\cyc)$, the latter is bound to have positive $\mu$-invariant as well. Thus when it happens, it is necessary to move to an isogenous curve where such $\Sel^\Sigma_\rel(\F(\theta))$ is a quotient. This is possible due to the following reasons:\begin{itemize}
	\item Since $\phi\psi=\omega$ and $\chi$ is quadratic, only one of $\phi\chi$ and $\psi\chi$ is odd;
	\item Ribet's lemma guarantees that one can find a non-split extension\[
	0\to\F(\phi)\to E[p]\to\F(\omega\phi^{-1})\to 0\]
	which remains non-split after twist:\begin{equation}\label{nonsplit}
	0\to \F(\phi\chi)\to E[p]\otimes\chi\to\F(\omega\phi^{-1}\chi)\to 0,\end{equation} where $\phi$ is any of the two characters in the semisimplification of $E[p]$.
\end{itemize}
Thus we may choose the odd ramified character (when exists) in $\phi\chi,\psi\chi$ to be a non-split quotient of $E[p]\otimes\chi$. Without loss of generality assume $\psi\chi$ is odd and ramified. In order to see a positive $\mu$-invariant, the associated Selmer group must be relaxed, meaning $\psi$ is ramified (matching the ordinary filtration of $E[p]$), and the ramification of $\psi$ and $\chi$ shall not cancel. To get a non-admissible twist, $\chi$ and $\psi$ must then be of the opposite parity, so either\begin{itemize}
	\item $\psi$ is ramified and even, $\chi$ is ramified and odd; or
	\item $\psi$ is ramified and odd, $\chi$ is ramified and even.
\end{itemize}
In either case, we shall see a diagram{\small\begin{equation*}\begin{tikzcd}
			& \Sel_{\str}^\Sigma(\F(\phi\chi)) \ar[r]\ar[d]&\Sel^\Sigma(E[p]\otimes\chi/\bQ_\cyc)  \ar[r]\ar[d]&  \Sel^\Sigma_\rel(\F(\psi\chi)) \ar[d] \\
			&  \H^1(\bQ^\Sigma/\bQ_\cyc,\F(\phi\chi))\ar[r]\ar[d]& \H^1(\bQ^\Sigma/\bQ_\cyc,E[p]\otimes\chi)\ar[r]\ar[d]& \H^1(\bQ^\Sigma/\bQ_\cyc,\F(\psi\chi))
			\ar[d]\\
			& \H^1(\bQ_{\infty,p},\F(\phi\chi)) \ar[r,"="]& \H^1(\bQ_{\infty,p},\Fil^-(E[p])\otimes\chi)\ar[r]& 0,
\end{tikzcd}\end{equation*}}
where $\phi\chi$ is ramified and even, and $\psi\chi$ is ramified and odd. Again, according to~\cref{GV2.4}(iii), it suffices to show $\Sel^\Sigma(E[p]\otimes\chi/\bQ_\cyc)$ as above has vanishing $\mu$-invariant.

\begin{proof}[Proof of Greenberg's conjecture for quadratic twists of ordinary curves:]
$ $\newline As in~\cite{LY26}, we shall consider the following diagram (enlarging $\Sigma$ if necessary)\begin{center}{\small\begin{equation*}\begin{tikzcd}	
				\H^1(\bQ^\Sigma/\bQ_\cyc,E[p]\otimes\chi) \ar[r,"\res_{L/\bQ}"]\ar[d,"loc_\bQ"]& \H^1(L^\Sigma/L_\cyc,E[p]\otimes\chi)\ar[d,"loc_{L}"]\ar[r,"\proj_L"] &\H^1(L^\Sigma/L_\cyc,\F(\phi\chi))\ar[d,"\loc_{L,\F(\phi\chi)}"]\\
				\H^1(\bQ_{\cyc,p},E[p]\otimes\chi)\ar[r,"\prod_{w_i\mid p}\res"]&\displaystyle\prod_{w_i\mid p}\H^1(L_{\cyc,w_i},E[p]\otimes\chi)\ar[r,"\prod \proj_i"]&\displaystyle\prod_{w_i\mid p}\H^1(L_{\cyc,w_i},\F(\phi\chi)),\\	 	\end{tikzcd}
	\end{equation*}}
\end{center}
where $L=\bQ(E[p])K_\chi$, and $\proj$ are the projection maps induced by the ordinary projection $E[p]\to E[p]/\Fil^+(E[p])\iso\F(\phi)$ with a twist by $\chi$. Note that since these cohomology groups have finite coefficients, $\mu=0$ is equivalent to being finite. To show $\mu(\Sel^\Sigma(E[p]\otimes\chi/\bQ_\cyc))=0$, we need to show no global $\mu$ from $\H^1(\bQ^\Sigma/\bQ_\cyc,E[p]\otimes\chi)$ under $\loc_\bQ$ is fully absorbed in $\H^1(\bQ_{\cyc,p},\Fil^+(E[p])\otimes\chi)=\H^1(\bQ_{\cyc,p},\F(\psi\chi))$. We will show a sufficient condition for this, that is no global $\mu$ from $\H^1(\bQ^\Sigma/\bQ_\cyc,E[p]\otimes\chi)$ under $\prod_{w_i\mid p}\res\circ \loc_\bQ$ is absorbed in $\prod_{w_i\mid p}\H^1(L_{\cyc,w_i},\F(\psi\chi))$, which is equivalent to the finiteness of $\ker(\prod\proj_i\circ\prod_{w_i\mid p}\res\circ\loc_\bQ)$.

From the commutativity of the diagram, it suffices to show $\ker(\loc_{L,\F(\phi\chi)}\circ\proj_L\circ\res_{L/\bQ})$ is finite, which we show in two steps:\begin{enumerate}
	\item $\ker(\proj_L\circ\res_{L/\bQ})$ is finite;
	\item $\ker(\loc_{L,\F(\phi\chi)})$ is finite.
	\end{enumerate}
\noindent\emph{Proof of step (i): }This follows from the following two facts: (1) $\F(\psi\chi)$ is not a $G_\bQ$-submodule of $E[p]\otimes\chi$, i.e., the non-splitness of~\eqref{nonsplit}, and (2) $\im(\res_{L/\bQ})$ lies in the invariant subspace $\H^1(L^\Sigma/L_\cyc,E[p]\otimes\chi)^{L_\cyc/\bQ_\cyc}$. One can still idenfity $\Gal(L_\cyc/\bQ_\cyc)$ with $\Gal(L/\bQ)$ for degree reasons as in~\cite[section 3.1.3]{LY26} by noting that $\bQ(E[p])K_\chi$ remains non-abelian over $\bQ$ if $\bQ(E[p])/\bQ$ is. Now the proof in \emph{loc. cit.} shows that the above two facts force the kernel in (i) to be finite.\\
\noindent\emph{Proof of step (ii): }This follows from the $p$-extension principle applied to the field extension $\bQ(E[p])K_\chi/\bQ(\phi,\psi)K_\chi$ which has degree $1$ or $p$, together with the Ferrero--Washington theorem applied to the abelian extension $\bQ(\phi,\psi)K_\chi=\bQ(\phi,\psi,\chi)$ of $\bQ$. See again \emph{loc. cit.} for more details.
\end{proof}

We can say a bit more about the distribution of $\mu$-invariants in an isogeny class. For example, no tamely ramified and even character exists if $p=3$, so we do not need to consider the exceptional case where one must move to an isogenous curve with reduced $\mu$-invariant, and all quadratic twists readily have vanishing $\mu$-invariant.

\section{Cyclotomic control theorem for quadratic twists}\label{cyccontrol}
In this section, we extend Greenberg's computation in~\cite[section 4]{Greenberg1999} to cover twists of ordinary primes in the rank $0$ case. Recall that $\Lambda=\llbracket \Gamma\rrbracket\iso\bZ_p\llbracket T\rrbracket$ is the Iwasawa algebra with $\Gamma=\Gal(\bQ_\cyc/\bQ)$. Let $\Sigma$ be a finite set of primes containing $p$, $\infty$, and those there $E$ or $E^{K_\chi}$ has bad reduction.

We begin with a lemma in~\cite{Greenberg1999}.\begin{lemma}(Control theorem)
	Assume $E/\bQ$ has potentially ordinary reduction at $p\geq 3$, then the restriction map\[
	\Sel_E(\bQ)_p\to \Sel_E(\bQ_\infty)^\Gamma_p
	\]
	has finite kernel and cokernel.
\end{lemma}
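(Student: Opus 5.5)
We will follow the standard Mazur control argument in the form of Greenberg~\cite[section 4]{Greenberg1999}, set up through the description of the Selmer group in \cref{2.5.3}. Write $A=E[p^\infty]$, and write $\Fil^-$ for the local quotient at $p$ used in \cref{2.5.3}; by \cref{untwist} we may view $E$ as a quadratic twist of an ordinary curve, so that this filtration makes sense. Define $\mathcal{P}_E(\bQ_\infty)=\prod_{v\in\Sigma}\mathcal{H}_v(\bQ_\infty)$ and $\mathcal{P}_E(\bQ)=\prod_{v\in\Sigma}\mathcal{H}_v(\bQ)$ to be the products of the local quotients defining the Selmer groups. We then get a commutative diagram with exact rows
\[
0\to \Sel_E(\bQ)_p\to \H^1(\bQ^\Sigma/\bQ,A)\to \mathcal{P}_E(\bQ),\qquad 0\to \Sel_E(\bQ_\infty)_p^\Gamma\to \H^1(\bQ^\Sigma/\bQ_\infty,A)^\Gamma\to \mathcal{P}_E(\bQ_\infty)^\Gamma,
\]
with vertical restriction maps $s,h,g=\prod_v g_v$. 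The snake lemma bounds $\ker s$ by $\ker h$, and bounds $\coker s$ by $\coker h$ together with $\ker g$. So it suffices to show that $\ker h$, $\coker h$ and each $\ker g_v$ are finite.

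\emph{Global term.} By inflation--restriction, $\ker h=\H^1(\Gamma,A^{G_{\bQ_\infty}})$ and $\coker h\subset \H^2(\Gamma,A^{G_{\bQ_\infty}})=0$, since $\Gamma\iso\bZ_p$ has cohomological dimension $1$. The group $A^{G_{\bQ_\infty}}$ is finite by Ribet's theorem, as used in the proof of \cref{GV2.4}(i). For a finite $\Gamma$-module $M$ we have $\#\H^1(\Gamma,M)=\#M_\Gamma=\#M^\Gamma$, so $\ker h$ is finite.

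\emph{Places $v\ne p$.} Here $\mathcal{H}_v=\H^1(\bQ_v,A)$, which is finite for $\ell\ne p$ since the Kummer image is trivial. Each $v$ is finitely decomposed in $\bQ_\infty$ and is unramified there, with decomposition group $\Gamma_v\iso\bZ_p$ or trivial. Then $\ker g_v=\H^1(\Gamma_v,A^{G_{\bQ_{\infty,v}}})$, which is finite because $A^{G_{\bQ_{\infty,v}}}$ has finite corank and is cohomologically trivial up to a finite group (its divisible part is acted on through a pro-$p$ quotient, and $\H^1(\Gamma_v,-)$ of a cofinitely generated module with $\Gamma_v$-invariants of finite corank has corank $0$, hence is finite). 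Alternatively one can apply the argument of~\cite[Lemma 4.?]{Greenberg1999} directly.

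\emph{The place $p$ (main obstacle).} This is where potentially ordinary reduction differs from the ordinary case. We use the ordinary-type condition from \cref{2.5.3}: the local term is $\H^1(\bQ_{\cyc,p},\Fil^-(E_0[p^\infty])\otimes\chi)$ over $\bQ_\infty$, and the image of the Kummer map over $\bQ_p$. Our first attempt is to pass to $K=K_\chi$, where $E$ becomes ordinary and Greenberg's ordinary analysis applies. Using the $\pm$-decomposition of \cref{Selinfo}(iv), which is compatible with restriction because $p\nmid 2$, finiteness of the kernel of the local map for $E_0/K$ then gives finiteness for the $-$-part, which is exactly $\ker g_p$ for $E$. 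Concretely, $\ker g_p$ is contained, up to finite error, in $\H^1(\Gamma,(\Fil^-\otimes\chi)^{G_{\bQ_{\infty,p}}})$. Since $\chi$ is ramified while $\Fil^-$ is unramified, the coefficient module $(\Fil^-\otimes\chi)^{G_{\bQ_{\infty,p}}}$ is finite, killed by $2$ up to trivial action; as $p$ is odd it is in fact $0$. There is also the comparison between the Kummer image over $\bQ_p$ and the local ordinary condition $\H^1_\ord$. Here the kernel bound $\#\H^0(\bQ_{\cyc,p},\Fil^-\otimes\chi)<\infty$ from the proof of \cref{GV2.4}(i) controls the discrepancy. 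We expect this identification of the finite-level local condition to be the delicate point. Combining the three steps, all kernels and cokernels are finite.
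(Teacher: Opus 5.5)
Your argument at $p$ rests on the sentence ``by \cref{untwist} we may view $E$ as a quadratic twist of an ordinary curve''. For the lemma as stated, this is false. \cref{untwist}(ii) untwists only potentially \emph{multiplicative} reduction by a quadratic character. For potentially good ordinary reduction, the untwisting character of \cite{KY24b} restricts to inertia at $p$ with order equal to the semistability defect $e$, and for $p\geq 5$ this can be $3$, $4$ or $6$. For example, $y^2=x^3+7$ has type II reduction with $e=6$ at $p=7$ and is potentially ordinary there, since $7$ splits in $\bQ(\sqrt{-3})$. Every quadratic twist $y^2=x^3+7d^3$ still has $e\in\{3,6\}$ at $7$. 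For such $E$ there is no $K_\chi$ and no $\pm$-decomposition, so your argument at $p$ does not apply.

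Your fallback (``Concretely, \dots'') breaks off exactly at the step that is needed in general. Let $C\subset A$ be the $G_{\bQ_p}$-stable corank-one subgroup and $D=A/C$. The missing step is the identification of the finite-level Kummer image with $\im\bigl(\H^1(\bQ_p,C)\to\H^1(\bQ_p,A)\bigr)_{\div}$. The bound by $\#\H^0(\bQ_{\cyc,p},\Fil^-\otimes\chi)$ that you invoke only controls the kernel of $\H^1(C)\to\H^1(A)$, not the discrepancy between this image and the Kummer image.

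This step is what the paper's citation of \cite{Greenberg1999} supplies, and it is short.
Take $L/\bQ_p$ totally tamely ramified of degree $e$, which is prime to $p$ (for $p=3$, ordinary reduction forces $e\le 2$), over which $E$ acquires good ordinary or multiplicative reduction.
The Kummer image over $L$ lies in the image of $\H^1(L,C)$.
The map $\H^1(\bQ_p,D)\to\H^1(L,D)$ is injective since $p\nmid[L:\bQ_p]$, so $\im(\kappa_{\bQ_p})$ maps to $0$ in $\H^1(\bQ_p,D)$.
Being divisible, it lies in $\im\bigl(\H^1(\bQ_p,C)\bigr)_{\div}$; both groups have corank $1$, so they agree.
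Then $\ker g_p$ is bounded by two finite quantities: the index of that divisible part, and $\H^1(\Gamma_p,D^{G_{\bQ_{\infty,p}}})$.
The latter vanishes when the character on $D$ is ramified, because tame ramification survives restriction to $G_{\bQ_{\infty,p}}$. The genuinely ordinary case is Greenberg's original computation.

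For the case the paper actually uses, ramified quadratic twists of ordinary curves, your passage to $K_\chi$ with the $\pm$-decomposition of Kummer images is correct and complete. It is the same device the paper later uses to compute $\ker(r_p)$. So the defect is one of scope, not of the global or $v\ne p$ steps.

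One small correction at $v\neq p$: finiteness of $\H^1(\bQ_v,A)$ comes from local duality ($\H^1(\bQ_v,E)[p^\infty]$ is dual to $E(\bQ_v)\otimes\bZ_p$), not from triviality of the Kummer image. Once it is finite, $\ker g_v$ is finite for free.
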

\begin{proof}
	This is in the discussion after the proof of~\cite[Prososition 3.8]{Greenberg1999}.
\end{proof}
In our setting, this lemma applies to a ramified quadratic twist of an ordinary elliptic curve $E$, where the map reads\[
\Sel_{\Gr_0}(E\otimes\chi/\bQ)\to \Sel_{\Gr_0}(E\otimes\chi/\bQ_\infty)^\Gamma,
\]
where over $\bQ$, the Selmer groups $\Sel_{\Gr_0}(E/\bQ)$ are defined as the kernel of{\small\[
\H^1(\bQ^\Sigma/\bQ, E[p^\infty]) \to\frac{\H^1(\bQ_p,E[p^\infty])}{\im(\H^1(\bQ_p,\Fil^+(E[p^\infty]))\to\H^1(\bQ_p,E[p^\infty]))_\div}  \times\prod_{v\in\Sigma-\{p\} }\H^1(\bQ_{\cyc,v},E[p^\infty]),\]}
with $\Sel_{\Gr_0}(E\otimes\chi/\bQ)$ defined as the kernel of{\small\begin{align*}
	\H^1(\bQ^\Sigma/\bQ, E[p^\infty]\otimes\chi) \to&\frac{\H^1(\bQ_p,E[p^\infty]\otimes\chi)}{\im(\H^1(\bQ_p,\Fil^+(E[p^\infty])\otimes\chi)\to\H^1(\bQ_p,E[p^\infty]\otimes\chi))_\div} \\
	 &\times\prod_{v\in\Sigma-\{p\}}\H^1(\bQ_{\cyc,v},E[p^\infty]\otimes\chi),\end{align*}}
 where `$\div$' denotes the divisible part. That these agree with the classical Selmer group coming from Kummer image is in~\cite[section 2]{Greenberg1999} for both $\bQ$ and $K_\chi$. Note that at the infinite level taking the divisible part is not needed because the images are already divisible. 
 
 As the Selmer groups for $E\otimes \chi$ are identified with the Selmer groups for the quadratic twist $E^{K_\chi}$, which has potentially ordinary reduction, so the control theorem applies.

Thus, under the assumption that $\Sel_{\Gr_0}(E\otimes\chi/\bQ)$ is finite, $\Sel_{\Gr_0}(E\otimes\chi/\bQ_\cyc)^\Gamma$ will be finite, and its Pontryagin dual $\Sel_{\Gr_0}(E\otimes\chi/\bQ_\cyc)^\vee/T$ will also be finite, with $\Sel_{\Gr_0}(E\otimes\chi/\bQ_\cyc)^\vee$ generated by an element $f_{E\otimes\chi}(T)\in\Lambda$ with $T\nmid f_{E\otimes\chi}(T)$. The aim of this section is to compute its constant term $f_{E\otimes\chi}(0)$ up to $p$-adic units.

\begin{theorem}\label{fE(0)}
Assume $E/\bQ$ has ordinary reduction at $p\geq 3$ and that $\chi$ is a quadratic ramified character. If $\Sel_{\Gr_0}(E\otimes\chi/\bQ)$ is finite, then\[
f_{E\otimes\chi}(0)\sim\bigl(\prod_{p\ne v\mid N(E^{K_\chi})}\ord_p(c_{v,E^{K_\chi}})\bigr)\vert\Sel(E^{K_\chi}/\bQ)\vert/\vert E^{K_\chi}(\bQ)[p^\infty]\vert^2.
\]
\end{theorem}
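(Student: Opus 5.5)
We will follow Greenberg's Euler characteristic computation in \cite[section 4]{Greenberg1999}, applied to the twisted module $A\defeq E[p^\infty]\otimes\chi\iso E^{K_\chi}[p^\infty]$ with the local condition at $p$ given by $\Fil^+(E[p^\infty])\otimes\chi$, as in \cref{2.5.3}. Write $S_\infty=\Sel_{\Gr_0}(E\otimes\chi/\bQ_\cyc)$ and $S_0=\Sel_{\Gr_0}(E\otimes\chi/\bQ)$. By the control theorem $S_\infty^\Gamma$ is finite. Together with cotorsionness this gives $T\nmid f_{E\otimes\chi}$ and
\[
\vert f_{E\otimes\chi}(0)\vert_p^{-1}=\#S_\infty^\Gamma/\#(S_\infty)_\Gamma .
\]
We will show $(S_\infty)_\Gamma=0$. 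This follows from $\Gamma$ having cohomological dimension $1$, the surjectivity of the global-to-local map in \cref{GV2.4}(i), and $\H^2(\bQ^\Sigma/\bQ_\cyc,A)=0$ (weak Leopoldt, which holds since $S_\infty$ is cotorsion). So the task reduces to computing $\#S_\infty^\Gamma$ in terms of $\#S_0$.

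Next we will write the fundamental diagram: the map $s\colon S_0\to S_\infty^\Gamma$ sits over the restriction $h\colon\H^1(\bQ^\Sigma/\bQ,A)\to\H^1(\bQ^\Sigma/\bQ_\cyc,A)^\Gamma$, and the local maps are $g=\prod_v g_v$. The snake lemma gives
\[
\#S_\infty^\Gamma=\#S_0\cdot\#\ker(h)^{-1}\#\coker(h)\cdot\#\ker(g\vert_{\im})\cdot(\text{cokernel correction}).
\]
By inflation-restriction, $\ker h=\H^1(\Gamma,A(\bQ_\cyc))$ and $\coker h\subset\H^2(\Gamma,\cdot)=0$. Hence $\#\ker h=\#A(\bQ_\cyc)_\Gamma=\#A(\bQ)$, which is $\#E^{K_\chi}(\bQ)[p^\infty]$. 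To handle the cokernel, we will use Poitou--Tate and the finiteness of $S_0$ to show that the global-to-local map over $\bQ$ has cokernel of size $\#\H^0(\bQ,A^*)$, where $A^*=T_pE^{K_\chi}\otimes\bQ_p/\bZ_p(1)\simeq A$ by Weil pairing. This contributes the second factor of $\#E^{K_\chi}(\bQ)[p^\infty]$ in the denominator.

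For the local terms, at $v\ne p$ in $\Sigma$ we will use Greenberg's computation: $\#\ker g_v=\#\H^1(\Gamma_v,A(\bQ_{\cyc,v}))$ equals the $p$-part of the Tamagawa number $c_{v}(E^{K_\chi})$. This depends only on $A$ restricted to $G_{\bQ_v}$, so it applies verbatim to $E^{K_\chi}$ (cf.\ \cref{GV2.4}(ii)). The prime $v=p$ is the \textbf{main obstacle}. There $\ker g_p$ is controlled by $\H^1(\Gamma_p,\cdot)$ of $\H^0(\bQ_{\cyc,p},\Fil^-(E[p^\infty])\otimes\chi)$ together with the $\Fil^+$-part. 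In Greenberg's ordinary computation this produces the factor $\#\tilde E(\F_p)[p]^2$ (or $(1-\alpha_p^{-1})^2$). Here, however, $\Fil^\pm\otimes\chi$ is ramified because $\chi$ is ramified and quadratic, so all of $\H^0(\bQ_{\cyc,p},\Fil^\pm(E[p^\infty])\otimes\chi)$ vanish, as already noted in the proof of \cref{GV2.4}(i). We will need to check carefully that the divisible-part normalization of the local condition at finite level matches the infinite-level one up to nothing, and that the local term at $p$ contributes no Tamagawa factor, which is consistent with additive potentially ordinary reduction having $c_p$ prime to $p$ for $p\ge 3$. Assembling these pieces gives $f_{E\otimes\chi}(0)\sim\prod_{v\ne p}c_v\cdot\#\Sel(E^{K_\chi}/\bQ)/\#E^{K_\chi}(\bQ)[p^\infty]^2$, using the identification of $S_0$ with the Kummer Selmer group from \cite[section 2]{Greenberg1999}.
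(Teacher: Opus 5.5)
\textbf{The gap: the claim $(S_\infty)_\Gamma=0$.} Your skeleton matches the paper's: Greenberg's Lemma 4.2, the $s,h,g$ diagram, $\#\ker h=\#E^{K_\chi}(\bQ)[p^\infty]$, Tamagawa factors away from $p$, and a Cassels/Poitou--Tate cokernel of order $\#E^{K_\chi}(\bQ)[p^\infty]$. But the facts you cite do not give $(S_\infty)_\Gamma=0$.

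Take $\Gamma$-cohomology of $0\to S_\infty\to\H^1(\bQ^\Sigma/\bQ_\cyc,A)\to\cP(\bQ_\cyc)\to0$ and use $\coker h=0$. This yields only $(S_\infty)_\Gamma\iso\coker(g)$. The term $\H^1(\bQ^\Sigma/\bQ_\cyc,A)_\Gamma$ drops out because it injects into $\H^2(\bQ^\Sigma/\bQ,A)$, which is $0$ by finiteness of $S_0$ plus divisibility, not by weak Leopoldt over $\bQ_\cyc$.

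Also, $g$ is not a product of local maps. It is the restriction of $r=\prod_v r_v$ to the global image $\cG(\bQ)$. The snake lemma for $\cG(\bQ)\subset\cP(\bQ)\to\cP(\bQ_\cyc)^\Gamma$, with $\coker r=0$, gives $\#\ker g\cdot\#\bigl(\cP(\bQ)/\cG(\bQ)\bigr)=\#\ker r\cdot\#\coker g$. So $(S_\infty)_\Gamma=0$ is equivalent to surjectivity of $\ker r\to\cP(\bQ)/\cG(\bQ)$, a group of order $\#E^{K_\chi}(\bQ)[p^\infty]$.

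\begin{itemize}
\item This surjectivity is automatic when $E^{K_\chi}(\bQ)[p]=0$, which always holds for $p\ge5$ since $\chi|_{I_p}=\omega^{(p-1)/2}\neq\omega$.
\item The theorem explicitly allows torsion, which does occur for $p=3$ when $\chi$ equals the ramified one of $\phi,\psi$.
\item In Greenberg's good ordinary case, $E(\bQ)_p\hookrightarrow\tilde E(\F_p)_p$ is absorbed by $\ker r_p$. Here $\ker r_p=0$, so nothing in your argument forces the surjectivity.
\end{itemize}

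Your final formula is right only because your two unproved assertions are off by the same factor $\#\coker g$. These are $(S_\infty)_\Gamma=0$ and the implicit $\#\ker g=\#\ker r/\#E^{K_\chi}(\bQ)[p^\infty]$ (your ``cokernel correction''). The repair is the paper's route. Do not claim $(S_\infty)_\Gamma=0$. Instead prove the analogue of Greenberg's Lemma 4.7, $\#\ker g=\#\ker r\cdot\#(S_\infty)_\Gamma/\#E^{K_\chi}(\bQ)[p^\infty]$, and let $\#(S_\infty)_\Gamma$ cancel in $\#S_\infty^\Gamma/\#(S_\infty)_\Gamma$.

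\textbf{The prime $p$.} Your treatment here differs from the paper's and works once the check you flag is carried out. The paper passes to $K_\chi$, splits $\ker r_{p,E/K_\chi}=\ker r_{p,E}\oplus\ker r_{p,E\otimes\chi}$, and compares orders via Greenberg's Lemma 3.4 separately in the good ordinary, non-split and split multiplicative cases. The direct argument runs as follows.
\begin{itemize}
\item $\H^2(\bQ_p,(\Fil^+(E[p^\infty])\otimes\chi)[p])$ is dual to $\H^0$ of a character whose restriction to $I_p$ is the nontrivial quadratic character $\omega^{(p-1)/2}$, so it vanishes.
\item Hence $\H^1(\bQ_p,\Fil^+(E[p^\infty])\otimes\chi)$ is divisible, and the divisible-part normalization is harmless.
\item Likewise $\H^2(\bQ_p,\Fil^+(E[p^\infty])\otimes\chi)=0$, and similarly over $\bQ_{\cyc,p}$. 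So $r_p$ becomes $\H^1(\bQ_p,\Fil^-(E[p^\infty])\otimes\chi)\to\H^1(\bQ_{\cyc,p},\Fil^-(E[p^\infty])\otimes\chi)^\Gamma$.
\item Its kernel $\H^1(\Gamma,\H^0(\bQ_{\cyc,p},\Fil^-(E[p^\infty])\otimes\chi))$ and its cokernel are both $0$.
\end{itemize}
This avoids the case analysis entirely. Your side claim that $\H^0(\bQ_{\cyc,p},\Fil^+(E[p^\infty])\otimes\chi)$ also vanishes can fail for $p=3$, where $\omega\chi$ is unramified on $G_{\bQ_{\cyc,3}}$. It is not needed.
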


This formula will be proved in a series of lemmas and propositions. We write $S_{E\otimes\chi}=\Sel_{\Gr_0}(E\otimes\chi/\bQ_\cyc)$ and $X_{E\otimes\chi}=(S_{E\otimes\chi})^\vee$, then according to~\cite[Lemma 4.2]{Greenberg1999}, under the assumptions of the theorem, \[f_{E\otimes\chi}(0)\sim\vert S_{E\otimes\chi}^\Gamma\vert/\vert (S_{E\otimes\chi})_\Gamma\vert.\]

Consider the diagram\\
{\small\begin{tikzcd}
	0\ar[r] & \Sel_{\Gr_0}(E\otimes\chi/\bQ)\ar[r]\ar[d,"s"]& \H^1(\bQ^\Sigma/\bQ,E[p^\infty]\otimes\chi) \ar[r,"\loc_S"]\ar[d,"h"]& \mathcal{G}\ar[r]\ar[d,"g"]&0 \\
	0\ar[r]& \Sel_{\Gr_0}(E\otimes\chi/\bQ_\cyc)^\Gamma \ar[r]& \H^1(\bQ^\Sigma/\bQ_\infty,E[p^\infty]\otimes\chi)^\Gamma  \ar[r]&\mathcal{G}(\bQ_\cyc)^\Gamma&
\end{tikzcd}}\\
where{\small\[\mathcal{G}=\im\{\H^1(\bQ^\Sigma/\bQ,E[p^\infty]\otimes\chi)\xrightarrow{\loc_\bQ}\frac{\H^1(\bQ_p,E[p^\infty]\otimes\chi)}{\H^1(\bQ_p,\Fil^-(E[p^\infty])\otimes\chi)}\times\prod_{v\in\Sigma-\{p\}}\H^1(\bQ_v,E[p^\infty]\otimes\chi),\]}
and
{\small\begin{align*}\mathcal{G}(\bQ_\cyc)=\im\{\H^1(\bQ^\Sigma/\bQ_\cyc,E[p^\infty]\otimes\chi)\xrightarrow{\loc_{\bQ_\cyc}}&\frac{\H^1(\bQ_{\cyc,p},E[p^\infty]\otimes\chi)}{\H^1(\bQ_{\cyc,p},\Fil^-(E[p^\infty])\otimes\chi)}\\&\times\prod_{v\in\Sigma-\{p\}}\H^1(\bQ_{\cyc,v},E[p^\infty]\otimes\chi),\end{align*}}
are the global-to-local images at the base level and the $\bQ_\cyc$-level respectively.

Thus we have $\vert S_{E\otimes\chi}^\Gamma\vert=\vert \Sel_{\Gr_0}(E\otimes\chi/\bQ)\vert\vert\coker(s)\vert/\vert\ker(s)\vert$, where the snake lemma identifies the quotient $\vert\coker(s)\vert/\vert\ker(s)\vert$ with $\vert\ker(g)\vert/\vert\ker(h)\vert$ as $\coker(h)=0$, which follows from the cohomological dimension of $\Gamma$ being $1$. Here we must explain why $\ker(h)$ is finite. It follows from the inflation-restriction exact sequence that $\ker(h)=\H^1(\Gamma,\H^0(\bQ_\cyc,E[p^\infty]\otimes\chi))=\H^0(\bQ_\cyc,E^{K_\chi}[p^\infty])_\Gamma$, where the identification comes from~\cite[Lemma B.2.8]{RubinEulerSystems}, but the numerator is already finite by Ribet's theorem. Now that $\H^0(\bQ_\cyc,E^{K_\chi}[p^\infty])$ is finite, $\vert\H^0(\bQ_\cyc,E^{K_\chi}[p^\infty])_\Gamma\vert=\vert\H^0(\bQ_\cyc,E^{K_\chi}[p^\infty])^\Gamma\vert=\vert E^{K_\chi}(\bQ)[p^\infty]\vert$. Combining these, we see that\[
\vert S_{E\otimes\chi}^\Gamma\vert=\vert \Sel_{\Gr_0}(E\otimes\chi/\bQ)\vert \vert\ker(g)\vert/\vert E^{K_\chi}(\bQ)[p^\infty]\vert.
\]

It remains to compute $\ker(g)$. We will first turn to study $\ker(r)=\prod_{v\in\Sigma}\ker(r_v)$ where where  {\small\begin{align*}r_p:\frac{\H^1(\bQ_p,E[p^\infty]\otimes\chi)}{\im(\H^1(\bQ_p,\Fil^+(E[p^\infty])\otimes\chi)\to\H^1(\bQ_p,E[p^\infty]\otimes\chi))_\div}\to\\\frac{\H^1(\bQ_{\cyc,p},E[p^\infty]\otimes\chi)}{\ker(\H^1(\bQ_{\cyc,p},\Fil^+(E[p^\infty])\otimes\chi)\to\H^1(\bQ_{\cyc,p},E[p^\infty]\otimes\chi))},\end{align*}}
and for $v\in\Sigma-\{p\}$, \[r_v:\H^1(\bQ_v,E[p^\infty]\otimes\chi)\to\H^1(\bQ_{\cyc,v},E[p^\infty]\otimes\chi).\]
At a place $v\ne p$, $\ker(r_v)$ is identified with $\ker(\H^1(\bQ_p,E^{K_\chi})\to\H^1(\bQ_{\cyc,v},E^{K_\chi}[p^\infty]))$. From~\cite[Lemma 3.3]{Greenberg1999} and the discussion after, $\ker(r_v)=0$ if $E^{K_\chi}$ has good reduction at $v$, and $\vert\ker(r_v)\vert=\ord_p(c_{v,E^{K_\chi}})$ where $E^{K_\chi}$ has bad reduction. Here $c_{v,E^{K_\chi}}$ is the Tamagawa number factor for $E^{K_\chi}$ at $v$.

At $p$, we compute $\ker(r_p)$ by going to $K_\chi$, noting that $\ker(r_{p,E/K_{\chi}})$ decomposes into $\ker(r_{p,E})\oplus \ker(r_{p,E\otimes\chi})$, so $\vert \ker(r_{p,E\otimes\chi})\vert=\vert\ker(r_{p,E/K_\chi})\vert/\vert \ker(r_{p,E})\vert$. Now we consider the cases where $E$ has good ordinary reduction, non-split multiplicative reduction and split-multiplicative reduction separately. It suffices to study~\cite[Proposition 2.5 and Lemma 3.4]{Greenberg1999} in these cases.
\subsection*{Computing $\ker(r_p)$}
$ $\newline
The reference in this subsection are all to~\cite{Greenberg1999}.

\begin{itemize}
\item When $E$ has good ordinary reduction:
\end{itemize}
In this case, Lemma 3.4 implies that $\vert\ker(r_{p,E/K_\chi})\vert=\vert\tilde{E}(\F)_p\vert^2=\vert\ker(r_{p,E})\vert$, as $(K_\chi)_p$ and $\bQ_p$ share the same residue field $\F$. Thus $\vert\ker(r_p)\vert=\vert\ker(r_{p,E\otimes\chi})\vert=1$.

\begin{itemize}
	\item When $E$ has non-split multiplicative reduction:
\end{itemize}
In the multiplicative reduction case, the remark after Proposition 2.5 shows that $\im(\kappa_{\bQ_p})=\im(\eta_{\bQ_p})$ and $\im(\kappa_{(K_\chi)_p})=\im(\eta_{(K_\chi)_p})$, so the error terms in Proposition 2.5 become trivial. The computation in Lemma 3.4 still works since $\Fil^+(E[p^\infty])\ne\bQ_p/\bZ_p(1)$, so $\ker(r_{p,E})=\ker(b_{p,E})\iso\ker(d_{p,E})$ having size $\vert\tilde{E}(\F)[p^\infty]\vert$. But $(K_\chi)_p$ and $\bQ_p$ have the same residual field $\F$, so $\ker(r_{p,E/K_\chi})$ has size $\vert\tilde{E}(\F)[p^\infty]\vert$ as well. Therefore, $\vert\ker(r_p)\vert=\vert\ker(r_{p,E\otimes\chi})\vert=1$.
\begin{itemize}
	\item When $E$ has split multiplicative reduction:
\end{itemize}
As before, the error terms in Proposition 2.5 is trivial. It suffices to compare $\ker(b_{p,E})$ with $\ker(b_{p,E/K_\chi})$ in Lemma 3.4. Now the first row of sequence (5) is no longer surjective because $\H^2(F_p,\Fil^+(E[p^\infty]))=\H^2(F_p,\bQ_p/\bZ_p(1))$ is dual to $\H^0(F_p,\bZ_p))=\bZ_p$ for $F=\bQ$ or $K_\chi$, so $\H^2(F_p,\Fil^+(E[p^\infty]))=\bQ_p/\bZ_p$. However, the next term $\H^2(F_p,E[p^\infty])$ remains $0$ since it's dual to $\H^0(F_p,T_pE)=0$ as $T_pE$ is a non-split extension of $\bZ_p$ by $\bZ_p(1)$. Therefore we have a commutative diagram\begin{center}{\small\begin{equation*}\begin{tikzcd}	
				0\ar[r]&\ker(b_F)\ar[r]\ar[d]&\ker(d_F)\ar[d]&&			\\
				0\ar[r]&\im(\pi_p) \ar[r]\ar[d,"b_F"]& \H^1(F_p,\Fil^-(E[p^\infty]))\ar[d,"d_F"]\ar[r] &\H^2(F_p,\Fil^+(E[p^\infty]))\ar[d,"e_F"]\ar[r]&0\\
				0\ar[r]&\im(\pi_\eta)\ar[r]&\H^1(F_{\cyc,p},\Fil^-(E[p^\infty]))\ar[r]&\H^2(F_{\cyc,p},\Fil^+(E[p^\infty]))\ar[r]&0,\\	 	\end{tikzcd}
	\end{equation*}}
\end{center}
where $\ker(d_F)\iso \H^1(F_{\cyc,p}/F_p,\H^0(F_{\cyc,p},\bQ_p/\bZ_p)))\iso\bQ_p/\bZ_p$ from the inflation-restriction sequence since $\Fil^-(E[p^\infty])=\bQ_p/\bZ_p$, for both $F=\bQ$, $K_\chi$. As $\ker(d_{E/K_\chi})\iso\ker(d_E)\oplus\ker(d_{E\otimes\chi})$, it follows that $\ker(d_{E\otimes\chi})$, and hence $\ker(b_{p,E\otimes\chi})$ is trivial. Then $\vert\ker(r_p)\vert=\vert\ker(r_{p,E\otimes\chi})\vert=1$.

Thus in any case, we have thus proved \[\vert\ker(r)\vert=\prod_{p\ne v\mid N(E^{K_\chi})}\ord_p(c_{v,E^{K_\chi}}),\]where $N(E^{K_\chi})$ is the product of bad primes for $E^{K_\chi}$. It remains to relate $\ker(r)$ to $\ker(g)$, following~\cite[section 4]{Greenberg1999}.

\subsection*{Computing $\ker(g)$}
$ $\newline
We continue to make frequent use of results in~\cite[section 4]{Greenberg1999} for both $\bQ$ and $K_\chi$, and keep references from \emph{loc. cit.}. First, Lemma 4.6 implies that the global-to-local map\[
\H^1(K_\chi^\Sigma/(K_\chi)_\cyc,E[p^\infty])\to P^\Sigma_E((K_\chi)_\cyc)
\]
is surjective, hence the surjectivity of \[
\H^1(\bQ^\Sigma/\bQ_\cyc,E[p^\infty]\otimes\chi)\to P^\Sigma_{E\otimes\chi}(\bQ_\cyc)
\]
follows from taking the $-$-isotypic component.

With this surjectivity, the pure cohomological argument in the rest of section $4$ (relying only on the divisibility of $E[p^\infty]$, which still holds for $E[p^\infty]\otimes\chi$, and the self-duality of Kummer image, which still holds for the potentially semistable curve $E^{K_\chi}$ by~\cite[Proposition I.6.7, Remark I.7.1]{RubinEulerSystems}), yields the analog of Lemma 4.7:
\begin{lemma}
	Under the assumption of~\cref{fE(0)}, we have\[
	\vert\ker(g)\vert=\vert\ker(r)\vert\vert(S_{E\otimes\chi})_\Gamma\vert/\vert E^{K_\chi}(\bQ)[p^\infty]\vert.
	\]
\end{lemma}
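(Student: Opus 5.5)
We follow the argument of~\cite[section 4]{Greenberg1999} leading to Lemma 4.7, carried out for the twisted module $E[p^\infty]\otimes\chi$ and using the $\pm$-decomposition over $K_\chi$ wherever Greenberg's inputs are stated only for ordinary curves. Write $P^\Sigma(\bQ)$ for the product of local quotients in the target of $\loc_\bQ$, and $P^\Sigma(\bQ_\cyc)$ for its analogue over $\bQ_\cyc$. Then $\mathcal{G}\subset P^\Sigma(\bQ)$ and $\mathcal{G}(\bQ_\cyc)=P^\Sigma_{E\otimes\chi}(\bQ_\cyc)$, where the equality is the surjectivity already established from Lemma 4.6 by taking $-$-parts. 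The map $g$ is the restriction of $r\colon P^\Sigma(\bQ)\to P^\Sigma(\bQ_\cyc)^\Gamma$ to $\mathcal{G}$, so $\ker(g)=\ker(r)\cap\mathcal{G}$. Hence
\[
\vert\ker(g)\vert=\vert\ker(r)\vert/[\ker(r):\ker(r)\cap\mathcal{G}]=\vert\ker(r)\vert/\bigl\vert\ker(r)/(\ker(r)\cap\mathcal{G})\bigr\vert.
\]
The task is therefore to show that the index of $\ker(r)\cap\mathcal{G}$ in $\ker(r)$ equals $\vert E^{K_\chi}(\bQ)[p^\infty]\vert/\vert(S_{E\otimes\chi})_\Gamma\vert$.

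First, we identify $\coker(\loc_\bQ)$. We use the Poitou--Tate sequence for $E[p^\infty]\otimes\chi$ over $\bQ^\Sigma/\bQ$, with local conditions given by the divisible images of $\Fil^+(E[p^\infty])\otimes\chi$ at $p$ and zero at $v\neq p$. Finiteness of $\Sel_{\Gr_0}(E\otimes\chi/\bQ)$ gives $P^\Sigma(\bQ)/\mathcal{G}\iso\Sel(T^*)^\vee$, where $T^*$ is the Tate dual, identified with $T_p(E)\otimes\chi$ via the Weil pairing. Because the dual Selmer group of a divisible module with finite Selmer group is a $\bZ_p$-lattice version, $\coker(\loc_\bQ)$ is dual to $\varprojlim$ of the compact Selmer group. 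The only contribution is then torsion, namely $\H^0$ of the dual, which gives $\vert\coker(\loc_\bQ)\vert=\vert E^{K_\chi}(\bQ)[p^\infty]\vert$. This uses the self-duality of the local conditions, i.e.\ that the orthogonal complement of $\im(\H^1(\bQ_p,\Fil^+\otimes\chi))_\div$ is the corresponding condition for the dual. We justify this by the self-duality of Kummer images for the potentially semistable curve $E^{K_\chi}$~\cite[Proposition I.6.7, Remark I.7.1]{RubinEulerSystems}, together with the agreement of $\Sel_{\Gr_0}(E\otimes\chi/\bQ)$ with the Kummer Selmer group. We also need the global Weil pairing for $E^{K_\chi}$, which holds since $E^{K_\chi}[p^\infty]\iso E[p^\infty]\otimes\chi$ and $\chi^2=1$.

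Second, we compare with the $\bQ_\cyc$-level. Taking $\Gamma$-coinvariants of the exact sequence $0\to S_{E\otimes\chi}\to\H^1(\bQ^\Sigma/\bQ_\cyc,E[p^\infty]\otimes\chi)\to P^\Sigma(\bQ_\cyc)\to0$, and using that $\H^1(\bQ^\Sigma/\bQ_\cyc,-)_\Gamma$ vanishes because $\H^2(\bQ^\Sigma/\bQ,E[p^\infty]\otimes\chi)=0$, we get a surjection $(S_{E\otimes\chi})_\Gamma\twoheadleftarrow\cdots$. This is where $(S_{E\otimes\chi})_\Gamma$ enters. The vanishing of $\H^2$ follows from the $\Lambda$-cotorsionness in \cref{GV2.4}(i) and the weak Leopoldt statement, both transferred through $K_\chi$ by taking $-$-parts. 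A diagram chase comparing $P^\Sigma(\bQ)\to P^\Sigma(\bQ_\cyc)^\Gamma$ with the global maps, in which $\coker(h)=0$ and $\ker(h)$ has size $\vert E^{K_\chi}(\bQ)[p^\infty]\vert$ as computed above, then yields
\[
[\ker(r):\ker(r)\cap\mathcal{G}]=\vert\coker(\loc_\bQ)\vert/\vert(S_{E\otimes\chi})_\Gamma\vert.
\]
Combining this with the first step gives the lemma.

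The main obstacle is the duality step at $p$. One has to check that the local condition at $p$ defined via $\Fil^+(E[p^\infty])\otimes\chi$ is exactly self-orthogonal under local Tate duality, including its divisible part, even though $\Fil^-(E[p^\infty])\otimes\chi$ is ramified. We plan to handle this by passing to $(K_\chi)_p$, where the condition is Greenberg's ordinary one and is known to be self-dual, and then taking $\sigma$-eigenspaces. Local Tate duality is compatible with the $\pm$-decomposition because $\chi^2=1$, so self-duality over $(K_\chi)_p$ descends to the $-$-part.
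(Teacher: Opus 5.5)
Your route is the paper's: it simply transports Greenberg's \S4 argument to $A=E[p^\infty]\otimes\chi$. Your Poitou--Tate computation of $\vert P^\Sigma(\bQ)/\mathcal{G}\vert$ is also sound. Since the condition at $p$ is divisible, its annihilator contains all torsion of $\H^1(\bQ_p,T^*)$, so the dual Selmer group is $\H^1(\bQ^\Sigma/\bQ,T^*)_{\tors}\iso E^{K_\chi}(\bQ)[p^\infty]$ once it is known to be torsion. The self-duality you flag as the main obstacle is just Tate local duality for $E^{K_\chi}/\bQ_p$, and your descent from $(K_\chi)_p$ also works.

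\textbf{First gap: the vanishing of $\H^2$.} Your justification of $\H^2(\bQ^\Sigma/\bQ,A)=0$ does not work. Cotorsionness and weak Leopoldt give $\H^2(\bQ^\Sigma/\bQ_\cyc,A)=0$. Hochschild--Serre then only yields $\H^2(\bQ^\Sigma/\bQ,A)\iso\H^1(\bQ^\Sigma/\bQ_\cyc,A)_\Gamma$, which is the very group you are trying to kill, so the argument is circular. Indeed, if the Selmer group over $\bQ$ had corank $r\ge 2$, the Euler characteristic formula would give $\corank\,\H^2(\bQ^\Sigma/\bQ,A)\ge r-1>0$ whatever happens over $\bQ_\cyc$.

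You must use the finiteness of $\Sel_{\Gr_0}(E\otimes\chi/\bQ)$. The group $P^\Sigma(\bQ)$ has corank $1$: corank $2$ at $p$, minus $1$ for the local condition, and corank $0$ at $v\ne p$. Hence $\corank\,\H^1(\bQ^\Sigma/\bQ,A)\le 1$, the Euler characteristic formula forces $\corank\,\H^2=0$, and $\H^2$ is divisible because $\cd_p=2$, so it vanishes. The same count shows $P^\Sigma(\bQ)/\mathcal{G}$ is finite, which is what actually justifies your claim that the dual Selmer group is torsion.

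\textbf{Second gap: surjectivity of $r$.} Your final diagram chase omits the surjectivity of $r$, and $\ker(h)$ plays no role in it. Apply the snake lemma to $\mathcal{G}\subset P^\Sigma(\bQ)$ mapping to $P^\Sigma(\bQ_\cyc)^\Gamma=\mathcal{G}(\bQ_\cyc)^\Gamma$ via $g$ and $r$. This gives $0\to\ker g\to\ker r\to P^\Sigma(\bQ)/\mathcal{G}\to\coker g\to\coker r\to 0$. Moreover $\coker g\iso(S_{E\otimes\chi})_\Gamma$, because $P^\Sigma(\bQ_\cyc)^\Gamma\to(S_{E\otimes\chi})_\Gamma\to\H^1(\bQ^\Sigma/\bQ_\cyc,A)_\Gamma=0$ is exact and $h$ is surjective. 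Hence the stated formula holds if and only if $\coker(r)=0$, and this must be proved.

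For $v\ne p$ it follows from $\cd(\Gamma_v)=1$. At $p$, let $L_\infty$ be the image of $\H^1(\bQ_{\cyc,p},\Fil^+(E[p^\infty])\otimes\chi)$ in $\H^1(\bQ_{\cyc,p},A)$; then $\coker(r_p)$ injects into $(L_\infty)_\Gamma$. This is a quotient of $\H^1(\bQ_{\cyc,p},\Fil^+(E[p^\infty])\otimes\chi)_\Gamma$, which embeds in $\H^2(\bQ_p,\Fil^+(E[p^\infty])\otimes\chi)$. The latter is dual to $\H^0(\bQ_p,\bZ_p(\delta\chi))=0$, where $\delta$ is the unramified character on $\Fil^-$ and $\delta\chi$ is ramified because $\chi$ is. This local vanishing, not the self-duality, is the input at $p$ your outline is missing.
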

Note that even though $\Fil^+(E[p^\infty])$ could possibly be $\bQ_p/\bZ_p(1)$, the error by an extra $\delta=1$ in~\cite[Lemma 2.3]{Greenberg1999} will disappear after passing to $\bQ_\cyc$. The proof of~\cref{fE(0)} now follows from the combination of the above lemmas, equations and the identification $\Sel_{\Gr_0}(E\otimes\chi/\bQ)=\Sel(E^{K_\chi}/\bQ)$.

\section{The Main Consequences}
\subsection{Greenberg's $\mu=0$ conjecture for certain potentially ordinary primes}
In~\cref{4}, we showed that Greenberg's conjecture is true for any ramified quadratic twists of an ordinary elliptic curve at an Eisenstein prime $p\geq 3$. Thus we have proved the following.
\begin{theorem}\label{Grpotord}
	Let $E$ be an elliptic curve with potentially ordinary reduction at a prime $p\geq 3$ with $E[p]$ reducible, such that it is a quadratic twist of an elliptic curve ordinary at $p$. Then Greenberg's $\mu=0$ conjecture is true, i.e., there exists an elliptic curve $E'$ in its $\bQ$-isogeny class with $\mu(\Sel(E'/\bQ_\cyc)^\vee)=0$. In particular, Greenberg's $\mu=0$ conjecture is true for all elliptic curves of potentially multiplicative reduction at an Eisenstein prime $p\geq 3$.
\end{theorem}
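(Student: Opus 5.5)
The plan is to assemble \cref{Grpotord} from the results of \cref{4}. Write the given curve as $E_1=E^{K_\chi}$ with $E$ ordinary at $p$ and $\chi$ a quadratic character. Note first that $E[p]$ is reducible if and only if $E_1[p]=E[p]\otimes\chi$ is, because twisting by a character preserves reducibility. So $p$ is an Eisenstein prime for $E$, and we are in the setting of \cref{4}. If $\chi$ is unramified at $p$, then $E_1$ is itself ordinary at $p$ and the statement is already covered by~\cite{GV00} and~\cite{LY26}. We may therefore assume $\chi$ is ramified. By the discussion in \cref{3}, we may also replace $\chi$ by $\omega^{\frac{p-1}{2}}$, absorbing $\chi_\unr$ into $E$; this does not change ordinarity. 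Cotorsionness of $\Sel(E_1/\bQ_\cyc)$ holds by the proof of \cref{GV2.4}(i), so \cref{Greenberg} is meaningful here.

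Next I split into cases according to admissibility. If $E\otimes\chi$ is admissible ($+$ or $-$), the analysis of the admissible subsections gives $\mu(\Sel^\Sigma(E[p]\otimes\chi/\bQ_\cyc)^\vee)=0$ directly. One uses the sequence of \cref{charforE^K}: the relaxed Selmer group of the even character and the strict Selmer group of the other character both have $\mu=0$ by Mazur--Wiles (\cref{IMCoddeven}) and the remarks after it. \cref{GV2.4}(iii) then transfers this to the primitive Selmer group of $E_1$ itself, with no isogeny needed. In the non-admissible case, I use Ribet's lemma to choose $E'$ in the isogeny class of $E$ such that the odd ramified character among $\phi\chi,\psi\chi$ (if any) is a non-split quotient of $E'[p]\otimes\chi$, as in~\eqref{nonsplit}. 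The twist $E'^{K_\chi}$ lies in the isogeny class of $E_1$. Then the proof given in \cref{nonadGr} applies: restriction to $L=\bQ(E'[p])K_\chi$, non-splitness for step (i), and the $p$-extension principle plus Ferrero--Washington for step (ii). This gives finiteness of the residual imprimitive Selmer group, hence $\mu=0$ by \cref{GV2.4}(iii). If neither $\phi\chi$ nor $\psi\chi$ is odd-and-ramified with a relaxed condition, the $\mu=0$ follows immediately from the exact sequence, as every subquotient then has $\mu=0$.

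For the final claim, let $E_1$ have potentially multiplicative reduction at an Eisenstein prime $p\ge3$. By \cref{untwist}(ii) there is a quadratic field $K$ with $E_1^K$ multiplicative, hence ordinary, at $p$, and $E_1=(E_1^K)^K$. Reducibility of $E_1^K[p]$ follows as above, so the first part applies.

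The main obstacle is making sure the non-admissible argument from \cref{nonadGr} covers every configuration. This includes the degenerate case where $\bQ(E[p])/\bQ$ is abelian, i.e.\ $E[p]$ is semisimple; then Ribet's lemma must instead be used to pick a non-split $E'$ first. It also includes the identification $\Gal(L_\cyc/\bQ_\cyc)\iso\Gal(L/\bQ)$. I would handle both by citing the corresponding reductions in~\cite{LY26} essentially verbatim, and by noting that the primitive vs.\ imprimitive comparison of \cref{GV2.4} is insensitive to the additive reduction at $p$.
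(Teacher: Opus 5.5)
Your outline is the paper's. The paper obtains \cref{Grpotord} by combining the admissible analysis of \cref{4} (Mazur--Wiles on the two graded pieces), the non-admissible argument of \cref{nonadGr} (Ribet's lemma, restriction to $L=\bQ(E'[p])K_\chi$, the $p$-extension principle and Ferrero--Washington), and \cref{untwist}.

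There is, however, a genuine gap in your fallback branch: ``if neither $\phi\chi$ nor $\psi\chi$ is odd-and-ramified with a relaxed condition, every subquotient has $\mu=0$.'' Ramification at $p$ plays no role in whether a relaxed piece has $\mu>0$. For \emph{every} odd $\theta$ the group $\Sel^\Sigma_\rel(\F(\theta))=\H^1(\bQ^\Sigma/\bQ_\cyc,\F(\theta))$ is infinite. Tate's global Euler characteristic formula over the totally real layer $\bQ_n$ gives $\dim_{\F}\H^1(\bQ^\Sigma/\bQ_n,\F(\theta))\geq p^n$. Restriction to $\bQ_\cyc$ is injective because $\theta$ stays non-trivial on $G_{\bQ_\cyc}$.

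Let $\theta_r$ be the ramified one of $\phi,\psi$, i.e.\ the one carrying the relaxed condition. Then $\theta_r\chi|_{I_p}=\omega^{\frac{p+1}{2}}|_{I_p}$.
\begin{itemize}
\item For $p\geq 5$ this is non-trivial, so your criterion coincides with ``$\theta_r\chi$ odd''.
\item For $p=3$ it is trivial. So every non-admissible twist at $p=3$ has its relaxed piece odd and \emph{unramified} at $3$, and your case split sends it to the ``immediate'' branch, where the claim is false.
\end{itemize}

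For instance, let $E$ be $3$-ordinary with $0\to\F(\omega)\to E[3]\to\F(\mathbf{1})\to 0$, such as the quotient of a $3$-ordinary curve by a rational point of order $3$. Let $\chi$ be the character of $\bQ(\sqrt{3})$. It is even, like the unramified character $\mathbf{1}$, so the twist is non-admissible. The sub $\F(\omega\chi)$ is then the character of $\bQ(i)$: odd, unramified at $3$, and locally equal to $\Fil^+(E[3])\otimes\chi$. Its infinite $\H^1$ therefore injects into $\Sel^\Sigma(E[3]\otimes\chi/\bQ_\cyc)$. By \cref{GV2.4}(iii) this gives $\mu(\Sel(E^{K_\chi}/\bQ_\cyc)^\vee)>0$, whereas your branch asserts $\mu=0$ with no isogeny. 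The phrase ``odd and ramified'' in \cref{nonadGr}, and its closing remark about $p=3$, have the same defect, so you have inherited it rather than introduced it.

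The repair is to split by parity alone. Whenever $\theta_r\chi$ is odd, which is exactly the non-admissible case, use Ribet's lemma to make $\F(\theta_r\chi)$ a non-split quotient and run the restriction-to-$L$ argument. That argument never uses ramification of $\theta_r\chi$ at $p$:
\begin{itemize}
\item step (i) needs only non-splitness plus $\Gal(L_\cyc/\bQ_\cyc)$-invariance;
\item step (ii) needs only the $p$-extension principle and Ferrero--Washington for $\bQ(\phi,\psi,\chi)$.
\end{itemize}
So the existence statement survives at $p=3$, but only after an isogeny move there as well. With this change, the rest of your outline is fine: the unramified case, the reduction to $\chi=\omega^{\frac{p-1}{2}}$, the admissible case without isogeny, and the potentially multiplicative claim via \cref{untwist}.
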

\begin{proof}
	This is a combination of the main results in~\cref{4} with~\cref{preliminary}.
\end{proof}

The proof of Greenberg's $\mu=0$ conjecture at Eisenstein primes has an interesting application. Assuming $E[p]$ is reducible over $G_\bQ$, then the proof of~\cref{Grpotord} in~\cref{4} in fact applies to Selmer groups over an abelian field $F$ if one replaces $L=\bQ(E[p])K_\chi$ there also with $F(E[p])K_\chi$, provided that $E[p]$ remains a non-split extension of two characters over $F$. Indeed, by the same logic, the non-splitness of the extension with the vanishing of $\mu$ for the field $F(E[p])K_\chi$ which is again either a trivial or a $p$-extension of an abelian number field $F(\phi,\psi)K_\chi=F(\phi,\psi,\chi)$ yields the desired result.

In fact, $E[p]$ always ramains non-split over $F$ if it is non-split over $\bQ$ since the latter is equivalent to $\bQ(E[p])$ being a strict $p$-extension of $\bQ(\phi,\psi)$, and the same would hold with $\bQ$ replaced by an abelian number field $F$. So this is always possible for at least one $E$ in its isogeny class by Ribet's lemma. Note that the proof relies solely on analyzing the mod-$p$ Selmer groups, and we can in turn gather information on the usual $p^\infty$-Selmer groups with $E[p^\infty]$ coefficients.

\begin{corollary}\label[corollary]{Mazur}
	Assume $E/\bQ$ has ordinary reduction at an Eisenstein prime $p$ and $\chi$ is a quadratic ramified character. Then the Selmer group for the quadratic twist $E^\chi$ is $\Lambda$-cotorsion over any abelian number field $F$, i.e., Mazur's cotorsion conjecture holds in these cases.
\end{corollary}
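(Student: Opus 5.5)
The plan is to deduce cotorsionness over $F_\cyc$ from the vanishing of a $\mu$-invariant. The point is that an $\mu=0$ statement for the residual Selmer group forces its Pontryagin dual to be finitely generated over $\bZ_p$, and that is far stronger than cotorsion. First, I would reduce to a convenient curve in the isogeny class. Cotorsionness of $\Sel(E^\chi/F_\cyc)$ over $\Lambda_F$ is an isogeny invariant: an isogeny and its dual compose to multiplication by the degree, so the induced maps on Selmer groups have kernel and cokernel of bounded exponent, and $\Lambda_F$-corank is preserved. Ribet's lemma lets us pick $E'$ in the isogeny class of $E$ with $E'[p]$ a non-split extension in which the odd ramified character among $\phi\chi,\psi\chi$ (if any) is the quotient. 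Twisting by $\chi$ is compatible with isogenies, so we may replace $E$ by $E'$. As noted before the statement, non-splitness over $\bQ$ persists over the abelian field $F$: it is equivalent to $F(E[p])$ being a genuine $p$-extension of $F(\phi,\psi)$, and $F$ abelian does not absorb this non-abelian $p$-part.

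Second, I would rerun the argument of \cref{nonadGr} over $F$ to show that $\Sel^\Sigma(E[p]\otimes\chi/F_\cyc)$ is finite, i.e.\ has $\mu=0$. The $\pm$-decomposition and \cref{charforE^K} go through with $\bQ$ replaced by $F$, since $[FK_\chi:F]\le 2$ is prime to $p$. In the admissible-type configurations, the $\mu$-vanishing comes from the two character Selmer groups via Ferrero--Washington for the abelian field $F(\phi,\psi,\chi)$, applied both to relaxed groups of even-type components and to strict groups. In the remaining configuration I would use the diagram with $L=F(E[p])K_\chi$. Step (i) uses non-splitness together with invariance under $\Gal(L_\cyc/F_\cyc)$ to show that $\ker(\proj_L\circ\res)$ is finite. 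Step (ii) applies the $p$-extension principle to $L/F(\phi,\psi)K_\chi$, which has degree $1$ or $p$, together with Ferrero--Washington over the abelian field $F(\phi,\psi,\chi)$.

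Third, I would pass from $E[p]$ to $E[p^\infty]$. This is the analogue of \cref{GV2.4}(iii) over $F$, proved as in \cite[Proposition 3.1.3]{LY26}. The map $\H^1(F^\Sigma/F_\cyc,E[p]\otimes\chi)\to\H^1(F^\Sigma/F_\cyc,E[p^\infty]\otimes\chi)[p]$ has kernel $\H^0(F_\cyc,E[p^\infty]\otimes\chi)/p$, which is finite. The local conditions at $p$ differ by finite groups, because $\chi$ is ramified and so $\H^0(F_{\cyc,w},\Fil^-(E[p^\infty])\otimes\chi)$ is finite. Hence $\Sel^\Sigma(E^\chi/F_\cyc)[p]$ is finite. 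By Nakayama, the dual $X$ is then finitely generated over $\bZ_p$, hence $\Lambda_F$-torsion, and the primitive Selmer group, being a subgroup of the imprimitive one, is cotorsion as well.

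The main obstacle I expect is step (i) over $F$. There one needs $\Gal(L_\cyc/F_\cyc)\iso\Gal(L/F)$ and enough non-abelian structure that invariant classes cannot land in the $\F(\psi\chi)$-part. I would handle this by checking that the degree argument of \cite[§3.1.3]{LY26} only uses that $L/F(\phi,\psi,\chi)$ is a non-trivial $p$-extension and that $F_\cyc\cap L=F$, after enlarging $F$ by a finite layer if necessary. Enlarging $F$ in this way is harmless, since it does not affect cotorsionness.
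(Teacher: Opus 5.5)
Your overall route is the paper's. You show the residual Selmer group over $F_\cyc$ is finite for a suitable curve in the isogeny class, deduce that $\Sel(E^\chi/F_\cyc)[p]$ is finite, and use isogeny invariance. Your third paragraph (kernel $\H^0(F_\cyc,E[p^\infty]\otimes\chi)/p$, finite local discrepancy, Nakayama) is a correct expansion of the paper's one-line pseudo-isomorphism claim. One small correction: over $F$ the character $\chi$ can become unramified at $w\mid p$ (e.g.\ if $F_w\supseteq (K_\chi)_p$), so your stated reason does not always hold. The conclusion survives anyway, since $\H^0(F_{\cyc,w},\Fil^-(E[p^\infty])\otimes\chi)$ is a subgroup of $\bQ_p/\bZ_p$, so its quotient by $p$ has order at most $p$.

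The step that fails is carrying the parity bookkeeping from $\bQ$ over to $F$. An abelian field is totally real or totally imaginary. For totally imaginary $F$ (e.g.\ $F=\bQ(\mu_p)$), the Euler characteristic formula gives $\corank_{\Lambda_F}\H^1(F^\Sigma/F_\cyc,\bQ_p/\bZ_p(\theta))\geq r_2(F)>0$ for every character $\theta$. Hence every residual relaxed group $\H^1(F^\Sigma/F_\cyc,\F(\theta))$ is infinite, and Ferrero--Washington, which only controls the torsion part, cannot make it finite.

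Your Ribet choice makes this fatal. In an admissible configuration the odd ramified character is $\theta_{\mathrm{unr}}\chi$, the one carrying the strict condition, so you make it the quotient. The relaxed-condition character $\theta_{\mathrm{ram}}\chi$ is then the submodule, and it coincides with $\Fil^+(E'[p])\otimes\chi$ at every place above $p$. So $\H^1(F^\Sigma/F_\cyc,\F(\theta_{\mathrm{ram}}\chi))$ maps with finite kernel into $\Sel^\Sigma(E'[p]\otimes\chi/F_\cyc)$, which is therefore infinite. Neither your admissible-type branch nor the $L$-diagram argument can rescue this $E'$, because step (i) of the latter needs the global sub-line to differ from $\Fil^+\otimes\chi$.

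The fix is to use Ribet's lemma to pick the curve in which the relaxed-condition character $\theta_{\mathrm{ram}}\chi$ is the non-split quotient. Then run the $L=F(E[p])K_\chi$ argument in every configuration. Steps (i) and (ii) use only non-splitness over $F$ and $\mu=0$ for $L$, never parity, and this is exactly what the paper invokes over $F$.
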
 
\begin{proof}
	Suppose not, then $\Sel(E/F_\cyc)[p]$ would have infinite $\F_p$-dimension. But $\Sel(E/F_\cyc)[p]$ is pseudo-isomorphic to $\Sel(E[p]/F_\cyc)$. This contradicts our $\mu=0$ result for an isogenous curve, since $\Lambda$-cotorsionness is invariant under isogeny.
\end{proof}
For ordinary curves, more general results have been obtained by results of Kato and Rohrlich, using Euler system. For potentially ordinary curves, when $F=\bQ$, similar results are shown in~\cite{Del98}.

\subsection{$p$-part BSD formulae at certain potentially ordinary primes}
We prove the $p$-part of BSD formulae for some elliptic curves with potentially ordinary reduction that are quadratic twists of ordinary curves.

\subsubsection{$p$-part BSD in rank $0$}
We now compare Kato's interpolation formulae of different branches of the $p$-adic $L$-function to the formula in~\cref{fE(0)}, given the Iwasawa Main Conjecture~\cref{IMC}. Recall that from~\cref{3}, it suffices to evaluate Kato's $p$-adic $L$-function (\cite[Theorem 16.2(i)]{Kato}) at $\omega^{\frac{p-1}{2}}$.

\begin{theorem}
	\[\cL(E,\omega^{\frac{p-1}{2}})(0)\sim\frac{p}{\alpha G(\omega^{\frac{p-1}{2}},\zeta_p)}\cdot\frac{L(f,\omega^{\frac{p-1}{2}},1)}{\Omega_{\pm}}\]
	up to a $p$-adic unit. Here $\alpha$ is the unit root of the Hecke polynomial of $E$ if $E$ has good ordinary reduction, and $\alpha=a_p$ if $E$ has multiplicative reduction, and $\pm=\omega^{\frac{p-1}{2}}(-1)=(-1)^{\frac{p-1}{2}}$.
\end{theorem}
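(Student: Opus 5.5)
The plan is to specialize Kato's interpolation formula for the branched $p$-adic $L$-function, \cite[Theorem 16.2(i)]{Kato}, to the $\omega^{\frac{p-1}{2}}$-branch evaluated at the trivial character of $\Gamma$, i.e.\ at $T=0$. Kato's $\cL(E,\Delta)\in\Lambda[\Delta]\llbracket T\rrbracket$ interpolates, for every character $\eta$ of $\Gal(\bQ(\mu_{p^\infty})/\bQ)$ of conductor $p^r$ with $r\geq 1$, the value
\[
\eta(\cL(E,\Delta))=\frac{p^r}{\alpha^r\,G(\eta^{-1},\zeta_{p^r})}\cdot\frac{L(f,\eta,1)}{\Omega_{\eta(-1)}},
\]
with no Euler factor at $p$, up to normalization of the Gauss sum and $\eta$ versus $\eta^{-1}$. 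Such factors are $p$-adic units or at worst differ by the unit $\eta(-1)$, since $G(\eta,\zeta)G(\eta^{-1},\zeta)=\eta(-1)p^r$. The character $\eta=\omega^{\frac{p-1}{2}}$ is nontrivial on $\Delta$ and trivial on $\Gamma$, so it has conductor exactly $p$ and falls in the range $r=1$. Projecting $\cL(E,\Delta)$ to its $\omega^{\frac{p-1}{2}}$-isotypic component gives $\cL(E,\omega^{\frac{p-1}{2}})$, and evaluating at $T=0$ corresponds to $\eta$. This would give
\[
\cL(E,\omega^{\frac{p-1}{2}})(0)=\frac{p}{\alpha\,G(\omega^{\frac{p-1}{2}},\zeta_p)}\cdot\frac{L(f,\omega^{\frac{p-1}{2}},1)}{\Omega_{\pm}},
\]
with sign $\pm=\omega^{\frac{p-1}{2}}(-1)=(-1)^{\frac{p-1}{2}}$. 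Note that $\omega^{\frac{p-1}{2}}$ is quadratic, so $G(\eta)=G(\eta^{-1})$.

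The key steps, in order, are as follows.

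\begin{enumerate}
\item Recall Kato's construction from a zeta element and the explicit reciprocity law. Record that his formula holds for good ordinary $E$ with $\alpha$ the unit root, and, by \cite{Wut17} or \cite{EPW06}, for multiplicative $E$ with $\alpha=a_p=\pm1$. In the latter case the trivial-zero factor $(1-\alpha^{-1})$ arises only for characters trivial on $\Delta$, so it does not occur on the $\omega^{\frac{p-1}{2}}$-branch.
\item Identify the period. Kato normalizes by $\Omega^\pm$ built from $\gamma$ in $T_pE$ or $H_1(E,\bZ)$, while the statement uses the Néron periods $\Omega_\pm$. These agree up to $p$-adic units under the standard choice of the zeta-element lattice; note that $p$ odd, so the factor $2$ in $\Omega_+$ is harmless.
\item Check the identification of the isotypic projection with $\cL(E,\chi)$ from \cref{3}.
\end{enumerate}

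I expect the main obstacle to be the period comparison in step (2) in the Eisenstein case. There Kato's integral lattice and the Néron lattice may differ by a power of $p$ coming from an isogeny. I would first try to use Wuthrich's integral refinement \cite{Wut17}, which normalizes by the Néron period and proves integrality in the reducible case. I would also verify that the $p$-adic unit ambiguity claimed is exactly what Wuthrich's result controls. If this fails, I would state the result up to the Manin-constant-type discrepancy, which is a unit for the optimal curve, and pass to that curve in the isogeny class.
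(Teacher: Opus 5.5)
Your proposal is correct and is essentially the paper's proof: the paper simply cites \cite[Theorem 16.2(i)]{Kato} for the character $\omega^{\frac{p-1}{2}}$ of conductor $p$ (Kato's $n=1$), with weight $k=2$ and critical point $r=1$, which is exactly the specialization you describe. Your step (2) is not needed for this statement, because $\Omega_\pm$ here denotes Kato's periods; the paper compares them with the Néron periods $\Omega_E^\pm$ separately, right after the theorem, using that the Manin constant is a $p$-adic unit in the ordinary case.
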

\begin{proof}
	It is just~\cite[Theorem 16.2(i)]{Kato} if we take $\chi=\omega^{\frac{p-1}{2}}$ (so $n=1$), $k=2$, and $r=1$.
\end{proof}

Now under the assumption that $E$ has ordinary reduction, the Manin constant is a $p$-adic unit, so $\Omega_{\pm}\sim\Omega^\pm_{E}$ up to a $p$-adic unit. $\alpha$ is by definition a $p$-adic unit as well. The Gauss sum is evaluated to be\[
	G(\omega^{\frac{p-1}{2}},\zeta_p)=
\begin{cases}
\sqrt{p} &\text{if }p\equiv 1\pmod{4}\\
i\cdot\sqrt{p} &\text{if }p\equiv 3\pmod{4}.
\end{cases}
\]
Thus the formula becomes\[
\cL(E,\omega^{\frac{p-1}{2}})(0)\sim \begin{cases}
	\sqrt{p}\cdot \frac{L(E\otimes\omega^{\frac{p-1}{2}},1)}{\Omega_E^+} &\text{if }p\equiv 1\pmod{4}\\
	(-i)\cdot\sqrt{p}\cdot \frac{L(E\otimes\omega^{\frac{p-1}{2}},1)}{\Omega_E^-} &\text{if }p\equiv 3\pmod{4}.
\end{cases}
\]
We now compare the period of $E$ to the period of $E\otimes\omega^{\frac{p-1}{2}}$ using~\cite[Propoition 2.5]{Pal11}. Since twisting by $\omega^{\frac{p-1}{2}}$ corresponds to twisting by \[
\begin{cases}
	\bQ(\sqrt{p})\text{ and }d=p &\text{if }p\equiv 1\pmod{4}\\
	\bQ(\sqrt{-p})\text{ and }d=-p &\text{if }p\equiv 3\pmod{4},
\end{cases}\]
and if $E$ has good ordinary reduction, $p\nmid \Delta$, while if $E$ has multiplicative reduction, $p\nmid c_4$, the $\tilde{u}$ from \emph{loc.cit.} is a $p$-adic unit and we have\[
\Omega_{E^\otimes\omega^{\frac{p-1}{2}}}\sim\begin{cases}
	\frac{\Omega_E}{\sqrt{p}} &\text{if }p\equiv 1\pmod{4}\\
	\frac{\Omega^-_E}{i\cdot\sqrt{p}} &\text{if }p\equiv 3\pmod{4},
\end{cases}
\]
we thus arrive at the final formula\begin{equation}\label{Lptwist}
\cL(E,\omega^{\frac{p-1}{2}})(0)\sim \frac{L(E\otimes\omega^{\frac{p-1}{2}},1)}{\Omega_{E\otimes\omega^{\frac{p-1}{2}}}}.
\end{equation}

Combining the above formula with~\cref{IMC} and~\cref{fE(0)}, we have the following theorem.

\begin{theorem}\label{rank0BSD}
	Let $E$ be an elliptic curve of ordinary reduction at $p\geq 3$ such that $E[p]$ is reducible, and let $\chi$ be an admissible quadratic ramified character corresponding to a quadratic field $K_\chi$. Assume that $L(E^{K_\chi},1)\ne 0$ for the quadratic twist $E^{K_\chi}$, and that $\#E'(\bQ)_\tors=1$ for all curve $E'$ isogenous to $E^{K_\chi}$. Then\[
	\ord_p\bigl(\frac{L(E^{K_\chi},1)}{\Omega_{E^{K_\chi}}}\bigr)=\ord_p\bigl(\#\Sha(E^{K_\chi}/\bQ)\cdot\Tam(E^{K_\chi}/\bQ)\bigr),
		\]
	i.e., the rank $0$ $p$-part BSD formula for $E^{K_\chi}$ holds. In particular, rank $0$ $p$-part BSD formula holds for all elliptic curves of potentially ordinary reduction at $p$ coming form such twists, including all potentially multiplicative elliptic curve at $p=3$.
\end{theorem}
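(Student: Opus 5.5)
The plan is to combine the three ingredients already in place: the Iwasawa Main Conjecture of \cref{IMC}, the Euler characteristic formula of \cref{fE(0)}, and the interpolation formula \eqref{Lptwist}. First, by the discussion in \cref{3}, I write $\chi=\chi_\unr\omega^{\frac{p-1}{2}}$ and absorb $\chi_\unr$ into $E$. This replaces $E$ by the ordinary curve $E^{K_\unr}$, which is still reducible at $p$ and still gives an admissible triple. We may therefore assume $\chi=\omega^{\frac{p-1}{2}}$, so that $E^{K_\chi}=E\otimes\omega^{\frac{p-1}{2}}$.

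Next I check the hypotheses of \cref{IMC}. The assumption $\#E'(\bQ)_\tors=1$ for every $E'$ isogenous to $E^{K_\chi}$ gives $\H^0(\bQ,E'[p]\otimes\chi)=0$ for all $E'$ isogenous to $E$, because isogenies of $E$ twist to isogenies of $E^{K_\chi}$ and $E'[p]\otimes\chi=(E')^{K_\chi}[p]$. Hence
\[
\Char_\Lambda(\Sel(E^{K_\chi}/\bQ_\cyc)^\vee)=(\cL(E,\chi)).
\]
Since $L(E^{K_\chi},1)\ne0$, Kato's Euler system bound (or the non-vanishing of $\cL(E,\chi)(0)$ through \eqref{Lptwist} together with the IMC and the control theorem) shows that $\Sel(E^{K_\chi}/\bQ)$ is finite, so \cref{fE(0)} applies. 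A generator $f_{E\otimes\chi}$ of the characteristic ideal therefore satisfies $T\nmid f_{E\otimes\chi}$, and its constant term equals the Tamagawa product times $\#\Sel(E^{K_\chi}/\bQ)/\#E^{K_\chi}(\bQ)[p^\infty]^2$, up to units. The IMC then gives $f_{E\otimes\chi}(0)\sim\cL(E,\chi)(0)$, and \eqref{Lptwist} identifies this with $L(E^{K_\chi},1)/\Omega_{E^{K_\chi}}$.

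To conclude, I use that the rank is zero and the torsion is trivial. Then $\Sel(E^{K_\chi}/\bQ)=\Sha(E^{K_\chi}/\bQ)[p^\infty]$ and the torsion denominator is $1$. One small bookkeeping point: the product in \cref{fE(0)} should be read as $\prod_v |c_v|_p^{-1}$, i.e.\ $p^{\ord_p(c_v)}$, so that it matches $\ord_p\Tam(E^{K_\chi}/\bQ)$ with the factor at $p$ omitted.

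The step I expect to be the main obstacle is exactly this missing factor at $p$. The formula in \cref{fE(0)} omits the local Tamagawa number $c_p(E^{K_\chi})$, because the contribution of $\ker(r_p)$ was shown to be trivial. I would therefore verify separately that $\ord_p(c_p(E^{K_\chi}))=0$. The reduction of $E^{K_\chi}$ at $p$ is additive of potentially good or potentially multiplicative type, and for additive reduction the component group has order at most $4$ and is a $p$-adic unit when $p\ge5$. For $p=3$, the potentially multiplicative case has Kodaira type $I_n^*$, whose component group has order $4$, which is again prime to $3$. The remaining potentially good case at $p=3$ (types $IV$, $IV^*$, with $c_p=3$ possible) has to be excluded or handled via Tate's algorithm using $p\nmid\Delta$ for $E$. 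I would try to show that a quadratic twist of a curve with good reduction at $3$ has reduction type $I_0^*$, so that $c_3\in\{1,2,4\}$.

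A secondary concern is the sign and period normalization in \eqref{Lptwist}, which I take as established.
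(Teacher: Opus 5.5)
Your argument for the displayed formula follows the paper's own proof. You reduce to $\chi=\omega^{\frac{p-1}{2}}$ and get finiteness of $\Sel(E^{K_\chi}/\bQ)$ from $L(E^{K_\chi},1)\neq 0$. The paper cites Kolyvagin here; your alternative via Kato, or via $\cL(E,\chi)(0)\neq 0$ plus one divisibility and the control theorem, works equally well. You then combine \cref{IMC}, \cref{fE(0)} and \eqref{Lptwist}, and handle the missing local factor at $p$ by showing $p\nmid c_p(E^{K_\chi})$.

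Your planned check at $p=3$ goes through. The twisted model has $v_3(\Delta)=6$. Inertia acts through a group of order $2$, so the conductor exponent is $2$. Ogg's formula then gives five components, i.e.\ type $I_0^*$. This is a concrete form of the paper's remark that types $IV$, $IV^*$ at $3$ are potentially supersingular. Reading the Tamagawa factor in \cref{fE(0)} as $p^{\ord_p(c_v)}$ is also the right correction.

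What you leave unaddressed is the final clause, that the theorem covers all potentially multiplicative curves at $p=3$. This needs every such curve $E'$ to be an \emph{admissible} ramified quadratic twist of a multiplicative curve. The paper's reason, that no character can be tamely ramified and even, fails once characters may ramify away from $3$.

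To see this, let $p=3$, $E'=E\otimes\chi$ with $\chi$ ramified, and let $\theta$ be the constituent of $E[3]^{ss}$ unramified at $3$. Then $\omega\chi$ is unramified at $3$, so $E'[3]^{ss}=\theta\chi\oplus\omega\theta^{-1}\chi$ has exactly one constituent ramified at $3$, namely $\theta\chi$. The admissibility condition $\theta(-1)=-\chi(-1)$ says precisely that $\theta\chi$ is odd, so it is a property of $E'$ alone.

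It can fail. Take $E_0$ with multiplicative reduction at $3$ and a rational point of order $3$, and set $E'=E_0^{\bQ(\sqrt{3})}$. Then $E'$ is potentially multiplicative at $3$. Its $E'[3]^{ss}$ is the sum of the characters of $\bQ(\sqrt{3})$ (even, ramified at $3$) and $\bQ(\sqrt{-1})$, so $\F(\mathbf 1)$ does not occur. Yet $E'$ is not an admissible twist of any multiplicative curve. So that clause needs the extra hypothesis that the constituent of $E'[3]^{ss}$ ramified at $3$ is odd; neither your argument nor the paper's establishes it as stated.
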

\begin{proof}
	As before, it suffices to consider the case where $\chi=\omega^{\frac{p-1}{2}}$. If $L(E^{K_\chi},1)\ne 0$, Kolyvagin's theorem implies that $\rk_\bZ(E^{K_\chi}(\bQ))=0$ and $\#\Sha(E^{K_\chi}/\bQ)<\infty$. In particular, $\#\Sel(E^{K_\chi}/\bQ)=\#\Sha(E^{K_\chi}/\bQ)[p^\infty]<\infty$. One then deduces the formula from a combination of~\eqref{Lptwist},~\cref{IMC} and~\cref{fE(0)}, by noting that $p$ cannot divide the Tamagawa number at $p$ in the case of potentially ordinary reduction (for $p=3$, such a divisibility can happen for Kodaira type IV and IV*, both of which are of potentially supersingular reduction). If an elliptic curve has potentially multiplicative reduction, it always comes from a quadratic ramified twist (but not necessarily admissible) of an ordinary elliptic curve. In particular, for $p=3$, all potentially multiplicative reduction such that $\F_3(\mathbf{1})$ is not in $E[3]^{ss}$ arise in this way, since all such twists are admissible, as no character can be tamely ramified and even.
\end{proof}

\printbibliography

@article {CGS,
	KEY = {CGS25},
	AUTHOR = {Castella, Francesc and Grossi, Giada and Skinner, Christopher},
	TITLE = {{Mazur's main conjecture at Eisenstein primes}},
	JOURNAL = {Mathematische Annalen},
	VOLUME = {393},
	YEAR = {2025},
	PAGES = {2451--2506},
	DOI = {10.1007/s00208-025-03239-x},
	URL = {https://doi.org/10.1007/s00208-025-03239-x},
}

@article{Del98, title={{Iwasawa Theory for Elliptic Curves at Unstable Primes}}, volume={113}, DOI={10.1023/A:1000408925932}, number={2}, journal={Compositio Mathematica}, author={DELBOURGO, DANIEL}, year={1998}, pages={123–154}}

@article {EPW06,
	AUTHOR = {Emerton, Matthew and Pollack, Robert and Weston, Tom},
	TITLE = {{Variation of Iwasawa invariants in Hida families}},
	JOURNAL = {Inventiones Mathematicae},
	VOLUME = {163},
	YEAR = {2006},
	PAGES = {523--580},
	DOI = {10.1007/s00222-005-0467-7},
	URL = {https://doi.org/10.1007/s00222-005-0467-7},
}

@article {Greenberg1989,
	AUTHOR = {Greenberg, Ralph},
	TITLE = {Iwasawa Theory for $p$-adic Representations},
	JOURNAL = {Advanced Studies in Pure Mathematics},
	YEAR = {1989},
	VOLUMN = {17},
	PAGES = {97--137},
	MRNUMBER = {1097613},
    DOI = {10.2969/aspm/01710097},
    URL = {https://doi.org/10.2969/aspm/01710097},
}

@Inbook{Greenberg1999,
author={Greenberg, Ralph},
editor={Viola, Carlo},
title={Iwasawa theory for elliptic curves},
bookTitle={Arithmetic Theory of Elliptic Curves: Lectures given at the 3rd Session of the Centro Internazionale Matematico Estivo (C.I.M.E.) held in Cetraro, Italy, July 12--19, 1997},
year={1999},
publisher={Springer Berlin Heidelberg},
address={Berlin, Heidelberg},
pages={51--144},
isbn={978-3-540-48160-7},
doi={10.1007/BFb0093453},
url={https://doi.org/10.1007/BFb0093453"},
}

@article {GV00,
	AUTHOR = {Greenberg, Ralph and Vatsal, Vinayak},
	TITLE = {{On the Iwasawa invariants of elliptic curves}},
	JOURNAL = {Inventiones mathematicae},
	VOLUME = {142},
	YEAR = {2000},
	PAGES = {17--63},
	DOI = {10.1007/s002220000080},
	URL = {https://doi.org/ 10.1007/s002220000080},
}

@article {Kato,
	AUTHOR = {Kato, Kazuya},
	TITLE = {$p$-adic Hodge theory and values of zeta functions of modular forms},
    JOURNAL = {Ast\'{e}risque},
    FJOURNAL = {Ast\'{e}risque},
	VOLUME = {295},
	YEAR = {2004},
	PAGES = {117--290},
}

@misc{KY24,
      title={On the anticyclotomic Iwasawa theory of newforms at Eisenstein primes of semistable reduction}, 
      author={Timo Keller and Mulun Yin},
      year={2024},
      eprint={2402.12781},
      archivePrefix={arXiv},
      primaryClass={math.NT},
      url={https://arxiv.org/abs/2402.12781}, 
}

@misc{KY24b,
      title={$p$-converse theorems for elliptic curves of potentially good ordinary reduction at Eisenstein primes}, 
      author={Timo Keller and Mulun Yin},
      year={2024},
      eprint={2410.23241},
      archivePrefix={arXiv},
      primaryClass={math.NT},
      url={https://arxiv.org/abs/2410.23241}, 
}

@article{LY26,
      title={On Greenberg's conjecture for rational elliptic curves at Eisenstein primes}, 
      author={Zichao Lin and Mulun Yin},
      year={2026},
      eprint={2609.40282},
      archivePrefix={arXiv},
      primaryClass={math.NT}
}

@article{MazurWiles,
	title={{Class fields of abelian extensions of Q}},
	author={Mazur, Barry and Wiles, Andrew},
	journal={Inventiones Mathematicae},
	volume={76},
	issue={2},
	pages={179-330},
	year={1984},
}

@article{Mazur,
	title={{Rational isogenies of prime degree (with an appendix by D. Goldfeld)}},
	author={Mazur, Barry},
	journal={Inventiones Mathematicae},
	volume={44},
	pages={129-162},
	year={1978},
}

@article{MSD74,
    author = {Mazur, Barry and Swinnerton-Dyer, Peter},
    title = {{Arithmetic of Weil curves}},
    journal = {Invent. Math.},
    year = {1974},
    volume = {25},
    pages = {1-61},
}

@article{MTT,
    author = {Mazur, Barry and Tate, John and Teitelbaum, Jeremy},
    title = {{On p-adic analogues of the conjectures of Birch and Swinnerton-Dyer}},
    journal = {Invent. Math.},
    year = {1986},
    issue = {84},
    pages = {1-48},
}

@article {NekovarSelmerComplexes,
    AUTHOR = {Nekov\'{a}\v{r}, Jan},
     TITLE = {Selmer complexes},
   JOURNAL = {Ast\'{e}risque},
  FJOURNAL = {Ast\'{e}risque},
    NUMBER = {310},
      YEAR = {2006},
     PAGES = {viii+559},
      ISSN = {0303-1179},
      ISBN = {978-2-85629-226-6},
   MRCLASS = {11R23 (11F41 11G40 11R34 22E41)},
  MRNUMBER = {2333680},
MRREVIEWER = {Laurent N. Berger},
}

@article{Pal11,
author = {Pal, Vivek},
year = {2011},
month = {09},
pages = {1513-1525},
title = {Periods of quadratic twists of elliptic curves},
volume = {140},
journal = {Proceedings of the American Mathematical Society},
doi = {10.1090/S0002-9939-2011-11014-1}
}

@article {PollackWeston2011,
    AUTHOR = {Pollack, Robert and Weston, Tom},
     TITLE = {On anticyclotomic {$\mu$}-invariants of modular forms},
   JOURNAL = {Compos. Math.},
  FJOURNAL = {Compositio Mathematica},
    VOLUME = {147},
      YEAR = {2011},
    NUMBER = {5},
     PAGES = {1353--1381},
      ISSN = {0010-437X},
   MRCLASS = {11R23 (11F33 11F80)},
  MRNUMBER = {2834724},
MRREVIEWER = {Andrea Mori},
       DOI = {10.1112/S0010437X11005318},
       URL = {https://doi.org/10.1112/S0010437X11005318},
}

@article {Ribet,
	AUTHOR = {Ribet, Kenneth A.},
	TITLE = {Torsion points on abelian varieties in cyclotomic extensions (appendix to an article by N. Katz and S. Lang)},
	JOURNAL = {Enseign. Math.},
	VOLUME = {27},
	ISSUE = {2},
	YEAR = {1982},
	NUMBER = {3-4},
	PAGES = {285--319},
}

@book {RubinEulerSystems,
    AUTHOR = {Rubin, Karl},
     TITLE = {Euler systems},
    SERIES = {Annals of Mathematics Studies},
    VOLUME = {147},
      NOTE = {Hermann Weyl Lectures. The Institute for Advanced Study},
 PUBLISHER = {Princeton University Press, Princeton, NJ},
      YEAR = {2000},
     PAGES = {xii+227},
      ISBN = {0-691-05075-9; 0-691-05076-7},
   MRCLASS = {11R23 (11G40 11R34 11R42)},
  MRNUMBER = {1749177},
MRREVIEWER = {Jan Nekov\'{a}\v{r}},
       DOI = {10.1515/9781400865208},
       URL = {https://doi.org/10.1515/9781400865208},
}

@article{Vat99,
author = {V. Vatsal},
title = {{Canonical periods and congruence formulae}},
volume = {98},
journal = {Duke Mathematical Journal},
number = {2},
publisher = {Duke University Press},
pages = {397 -- 419},
year = {1999},
doi = {10.1215/S0012-7094-99-09811-3},
URL = {https://doi.org/10.1215/S0012-7094-99-09811-3}
}

@article{Wut17,
 TITLE = {{On the integrality of modular symbols and Kato's Euler system for elliptic curves}},
 AUTHOR = {Wuthrich, Christian},
YEAR = {2014},
MONTH = {01},
 JOURNAL = {Documenta Mathematica},
VOLUME = {19},
PAGES = {381 - 402},
DOI = {10.4171/dm/450},
}

@misc{XY26,
      title={{Kato's Iwasawa main conjecture for modular forms at Eisenstein primes}}, 
      author={Xu, Ruichen and Yin, Mulun},
      year={2026},
      note={in preparation},
}

\end{document}